\colorlet{cite}{red}
\tikzset{ 
  baseline=-2.3pt,
  text height=1.5ex, text depth=0.25ex,
  >=stealth,
  node distance=2cm,
  mid/.style={fill=white,inner sep=2.5pt},
}
\newtheoremstyle{mydef}
  {}		
  {}		
  {}		
  {}		
  {\scshape}	
  {. }		
  { }		
  {\thmname{#1}\thmnumber{ #2}\thmnote{ #3}}	
\newtheorem{theorem}{Theorem}[section]
\newtheorem*{theorem*}{Theorem}
\newtheorem{proposition}[theorem]{Proposition}
\newtheorem*{proposition*}{Proposition}
\newtheorem{lemma}[theorem]{Lemma}
\newtheorem*{lemma*}{Lemma}
\newtheorem{corollary}[theorem]{Corollary}
\newtheorem*{corollary*}{Corollary}
\theoremstyle{definition}
\newtheorem{definition}[theorem]{Definition}
\newtheorem{example}[theorem]{Example}
\theoremstyle{remark}
\newtheorem{remark}[theorem]{Remark}
\newtheorem*{problem*}{Problem}
\DeclareMathOperator{\im}{Im}
\DeclareMathOperator{\ad}{ad}
\DeclareMathOperator{\Ad}{Ad}
\author{Elizabeth Gasparim, Luiz A. B. San Martin, Fabricio Valencia}
\subjclass[2010]{53D12; 14M15.}
\address{}
\date{\today}
\address{E. Gasparim - Depto. Matem\'aticas, Univ. Cat\'olica del Norte, Antofagasta, Chile, \newline  
      L. A. B. San Martin - Depto. de Matem\'{a}tica, Imecc - Unicamp, Campinas, Brasil,\newline
F. Valencia - Inst. Matem\'aticas, Univ. de Antioquia, Medell\'in, Colombia. \newline
	etgasparim@gmail.com,  smartin@ime.unicamp.br, fabricioyarro@gmail.com
}
\title[Infinitesimally Tight Lagrangian Orbits]{Infinitesimally Tight Lagrangian Orbits}
\begin{document}
\maketitle

\begin{abstract}
We describe isotropic orbits for the restricted action of a subgroup of a Lie group acting on a symplectic manifold by Hamiltonian symplectomorphisms and admitting an Ad*-equivariant moment map. We obtain examples of Lagrangian orbits of complex flag manifolds, of cotangent bundles of orthogonal Lie groups, and of products of flags. We introduce the notion of infinitesimally tight and study the intersection theory of such Lagrangian orbits, giving many examples.
\end{abstract}

\tableofcontents
\section{Introduction}
\label{intro}

Let $(M,\omega)$ be a connected symplectic manifold, $G$ a Lie group with Lie algebra $\mathfrak{g}$, and $L$ a Lie subgroup of $G$. Assume that there exists a Hamiltonian action of $G$ on $M$ which admits an $\Ad^*$-equivariant moment map $\mu:M\to\mathfrak{g}^*$. The purpose of this paper is to study those orbits $Lx$ with $x\in M$ that are Lagrangian submanifolds of $(M, \omega)$, or more generally, isotropic submanifolds. We also discuss some essential features  of the intersection theory of such  Lagrangian orbits, namely the concepts of locally tight 
and infinitesimally tight Lagrangians. 
The famous  Arnold--Givental conjecture, proved in many cases,  predicts that the number of intersection points of a Lagrangian $\mathcal{L}$ 
and its image $\varphi(\mathcal{L}) $ by the flow of a Hamiltonian vector field can be estimated from below by the sum of its 
$\mathbb Z_2$ Betti numbers:
$$|\mathcal{L}\cap \varphi(\mathcal{L})| \geq \sum b_k(\mathcal{L};\mathbb Z_2).$$
The concepts of tightness address those Lagrangians which attain the lower bound, and are therefore 
of general interest in symplectic geometry.\\ 

For us  additional  motivation to study Lagrangians and their intersection theory comes from 
questions related to the Homological Mirror Symmetry conjecture and in particular from 
concepts of objects and morphisms in the so called Fukaya--Seidel categories, which 
are generated by Lagrangian vanishing cycles (and their thimbles) 
with prescribed behavior  inside of symplectic fibrations. 
In  \cite[Thm.\thinspace 2.2]{GGSM1} it was shown that the 
usual height function from Lie theory 
gives  adjoint orbits of semisimple Lie groups the structure of symplectic Lefschetz fibrations. 
These give rise to what is known as  Landau--Ginzburg (LG) models.
We wish to study the Fukaya--Seidel category of these LG models. Finding Lagrangian submanifolds 
and understanding their intersection theory inside a  compactification is an initial  tool 
to investigate possible thimbles. The Fukaya--Seidel category of the LG model for the adjoint orbit of $\mathfrak{sl}(2,\mathbb{C})$ was calculated in \cite{BBGGSM} and such LG models was shown to have no projective mirrors (Theorems 4.1 and 7.6).\\
Products of flag manifolds occur as compactifications of adjoint orbits of semisimple
noncompact Lie groups, see \cite[Sec. 3]{GGSM2}, and this originated our particular interest in
finding Lagrangian submanifolds inside products of flags. 
Moreover,  minimal noncompact semisimple orbits  were shown in \cite{BGSMR} to satisfy the KKP conjecture, 
but a verification of the KKP conjecture for general semisimple orbits, which remains to be done,   
would also require better
understanding of the Lagrangians inside their compactification. 
Hence we have various motivations to search for  Lagrangians inside noncompact adjoint orbits, 
the compact ones, that is, the flag manifolds, 
and  products.\\ 

This paper is divided as follows. In Section \ref{isotropic}, we give a simple characterization of isotropic orbits. If $\mathfrak{l}$ denotes the Lie algebra of $L$ and $\mathfrak{l}'$ its derived algebra, we have
\begin{proposition*}{\bf \ref{anill}}
	An orbit $Lx$ is isotropic if and only if  $\mu \left(x\right) 
	$ belongs to the annihilator $\left( \mathfrak{l}^{\prime }\right) ^{\circ }$ of  $\mathfrak{l}^{\prime}$.
\end{proposition*}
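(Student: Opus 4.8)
The plan is to translate the isotropy of $Lx$ into a single pointwise identity at $x$, and then to evaluate the symplectic form on fundamental vector fields by combining the defining equation of the moment map with its $\Ad^{*}$-equivariance.

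First I would record the standard description of the tangent space of the orbit: for $X\in\mathfrak{l}$ let $X_{M}$ denote the fundamental vector field on $M$ generated by the $G$-action, so that $T_{x}(Lx)=\{\,X_{M}(x):X\in\mathfrak{l}\,\}$. Since the $G$-action is Hamiltonian it preserves $\omega$, hence so does the restricted $L$-action; consequently $Lx$ is isotropic precisely when $T_{x}(Lx)$ is an isotropic subspace of $(T_{x}M,\omega_{x})$, so the statement reduces to showing that $\omega_{x}\big(X_{M}(x),Y_{M}(x)\big)=0$ for all $X,Y\in\mathfrak{l}$ is equivalent to $\mu(x)\in(\mathfrak{l}')^{\circ}$.

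For the core computation, set $\mu^{X}:=\langle\mu,X\rangle\in C^{\infty}(M)$. The moment map identity $\iota_{X_{M}}\omega=d\mu^{X}$ gives $\omega_{x}\big(X_{M}(x),Y_{M}(x)\big)=\big(d\mu^{X}\big)_{x}\big(Y_{M}(x)\big)=\tfrac{d}{dt}\big|_{t=0}\,\mu^{X}\!\big(\exp(tY)\cdot x\big)$. Now I would insert the $\Ad^{*}$-equivariance $\mu\big(\exp(tY)\cdot x\big)=\Ad^{*}\!\big(\exp(tY)\big)\mu(x)$, which turns the right-hand side into $\tfrac{d}{dt}\big|_{0}\big\langle\mu(x),\Ad(\exp(-tY))X\big\rangle$; differentiating at $t=0$ yields $\omega_{x}\big(X_{M}(x),Y_{M}(x)\big)=\big\langle\mu(x),[X,Y]\big\rangle$, the overall sign depending only on one's moment map convention and being immaterial. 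Since $\mathfrak{l}'=[\mathfrak{l},\mathfrak{l}]$ is by definition the linear span of the brackets $[X,Y]$ with $X,Y\in\mathfrak{l}$, the vanishing of $\langle\mu(x),[X,Y]\rangle$ for all such $X,Y$ is equivalent to $\mu(x)$ annihilating $\mathfrak{l}'$, i.e.\ to $\mu(x)\in(\mathfrak{l}')^{\circ}$; combined with the reduction of the previous paragraph this proves the proposition. I do not expect a serious obstacle: the only points requiring care are the sign bookkeeping when differentiating the equivariance relation, the elementary remark that vanishing on every bracket is the same as vanishing on the linear span $\mathfrak{l}'$, and the justification — via $L$-invariance of $\omega$ — that isotropy need only be checked at the base point $x$.
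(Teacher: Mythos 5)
Your proof is correct and follows essentially the same route as the paper: both reduce isotropy to the pointwise identity $\omega_x\bigl(\widetilde{X}(x),\widetilde{Y}(x)\bigr)=\pm\,\mu(x)\bigl([X,Y]\bigr)$ obtained from the moment map equation together with $\Ad^{*}$-equivariance, and then identify the vanishing of this pairing for all $X,Y\in\mathfrak{l}$ with $\mu(x)\in(\mathfrak{l}')^{\circ}$. The only cosmetic difference is in the reduction to a single point of the orbit: you invoke $L$-invariance of $\omega$, while the paper instead verifies that the condition $\mu(y)\in(\mathfrak{l}')^{\circ}$ propagates along the orbit using equivariance of $\mu$ and the invariance of $\mathfrak{l}'$ under $\Ad(L)$.
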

In Section \ref{ortogonal}, we use Proposition \ref{anill} to characterize isotropic orbits  in the cotangent bundle of an orthogonal Lie group. In particular, if $G$ is a semisimple Lie group and $T^*(G)\approx G\times\mathfrak{g}^*$ is its cotangent bundle
we prove:
\begin{corollary*}{\bf \ref{SemisimpleCaseCotangent}}
	The only isotropic orbits by the natural left and right actions of $G$ on $T^\ast(G)$ are of the form $G(g,0)$ for all $g\in G$. Such orbits are Lagrangian and Hamiltonian isotopic to $G$.
\end{corollary*}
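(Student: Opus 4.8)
My plan is to deduce the statement directly from Proposition~\ref{anill}, once the moment map on $T^{*}(G)$ is understood. Take $M=T^{*}(G)\approx G\times\mathfrak{g}^{*}$ in the left trivialization, with $L=G\times G$ acting by the cotangent lifts of left and right translations — or with $L=G$ acting by just one of them; the argument is the same. The key observation is that the moment map of the cotangent lift of a transitive (here even free) action vanishes exactly on the zero section: its value at a covector $\alpha_{q}$ is the restriction of $\alpha_{q}$ to the span of the infinitesimal generators at $q$, which is all of $T_{q}G$. Concretely, in the left trivialization the combined moment map is $\mu(g,\xi)=\bigl(\Ad^{*}(g)\xi,\,-\xi\bigr)$, up to the usual sign and inverse conventions, so $\mu(g,\xi)=0$ if and only if $\xi=0$. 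Recording this correctly — in particular tracking the left/right trivialization — is the one point that takes a little care.

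Next I would invoke semisimplicity: since $\mathfrak{g}$ is semisimple it is perfect, $[\mathfrak{g},\mathfrak{g}]=\mathfrak{g}$, so the Lie algebra $\mathfrak{l}$ of the acting group — be it $\mathfrak{g}$ or $\mathfrak{g}\oplus\mathfrak{g}$ — coincides with its derived algebra $\mathfrak{l}'$, whence $(\mathfrak{l}')^{\circ}=\{0\}$. Proposition~\ref{anill} then says that an orbit $L(g,\xi)$ is isotropic if and only if $\mu(g,\xi)\in(\mathfrak{l}')^{\circ}=\{0\}$, i.e.\ if and only if $\xi=0$. Hence the isotropic orbits are precisely those through the points $(g,0)$, $g\in G$, and each of them is the full zero section $G\times\{0\}$.

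It remains to see that $G\times\{0\}$ is Lagrangian and Hamiltonian isotopic to $G$. The first is immediate: $G\times\{0\}$ is isotropic by the above and has dimension $\dim G=\tfrac{1}{2}\dim T^{*}(G)$ — equivalently, the zero section of any cotangent bundle is Lagrangian. The second is then essentially vacuous, since the orbit coincides with the zero section, which is the canonical copy of $G$ inside $T^{*}(G)$, so the constant path already realizes the Hamiltonian isotopy. Thus the main — and modest — obstacle is the bookkeeping in the moment map computation of the first step; everything afterward is a one-line application of Proposition~\ref{anill} together with the perfectness of $\mathfrak{g}$.
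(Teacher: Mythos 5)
Your proof is correct and follows essentially the same route as the paper: both apply Proposition~\ref{anill} to the explicit moment maps $\mu_L^\lambda(g,\alpha)=\Ad^*(g)(\alpha)$ and $\mu_R^\lambda(g,\alpha)=-\alpha$ and then use semisimplicity to force $\alpha=0$. The only cosmetic difference is that you invoke perfectness, $(\mathfrak{g}')^\circ=\{0\}$, directly in $\mathfrak{g}^*$, whereas the paper first passes through Proposition~\ref{anillCotangent}, translating the condition via the invariant form into $\Ad(g)(X_\alpha)\in\mathfrak{z}(\mathfrak{g})$ and then using $\mathfrak{z}(\mathfrak{g})=0$ --- two equivalent formulations of the same fact.
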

In Section \ref{secexflagcplx}, we consider compact semisimple Lie groups.
Endowing  adjoint orbits $\Ad(U)(iH_0)$ with the Kirillov--Kostant--Souriau symplectic form, we prove that the orbit of  a proper subgroup $L\subset U$  through the origin $iH_0$ is isotropic if and only if $\mathfrak{l}'\subset (iH_0)^{\perp}$. 
For example,  the orbit of $\mathrm{SO}(n)$ through the origin of any flag  of $SU(n)$ is Lagrangian.\\
An interesting example happens  when $U=\mathrm{SU}(3)$ and $L=U_H$ is the isotropy group in $U$ of the element $H=i\mathrm{diag}\{2,-1,-1\}$.
We have that $(\mathfrak{u}_H)^\perp$ intersects the 3 types of adjoint orbits of $SU(3)$, namely the flags $\mathbb{CP}^{2}$, $\mathrm{Gr}_{2}\left( 3,%
\mathbb{C}\right) $ and  $\mathbb{F}\left( 1,2\right) $. We prove that the only possible isotropic orbits  
of $L$ passing 
through $H$ are:
\begin{itemize}
	\item the trivial one, that is, a single point in $\mathbb{CP}^{2}$,
	\item a (2 dimensional) Lagrangian in $\mathrm{Gr}_{2}\left( 3,%
	\mathbb{C}\right) $, and 
	\item a (3 dimensional) Lagrangian in  the flag $\mathbb{F}\left( 1,2\right)$.
\end{itemize}
In Section \ref{secdiag}, we study Lagrangian orbits in  products of flag manifolds with 
respect to the diagonal and  shifted diagonal actions, showing:
\begin{theorem*}{\bf \ref{noniso}}
	A product of flags $\mathbb{F}_{\Theta _{1}}\times \mathbb{F}_{\Theta _{2}}$
	admits an isotropic orbit by the diagonal action if and only if \,
	$\mathbb{F}_{\Theta _{2}}$ is the dual  flag $\mathbb{F}_{\Theta
		_{1}^{\ast }}$ of \, $\mathbb{F}_{\Theta _{1}}$.
\end{theorem*}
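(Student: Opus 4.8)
The plan is to push the problem through the moment map and then recognise the resulting condition as the very definition of the dual flag. The group acting on $M=\mathbb{F}_{\Theta_1}\times\mathbb{F}_{\Theta_2}$, equipped with the product symplectic form $\pi_1^*\omega_1+\pi_2^*\omega_2$, is $G=U\times U$ with $U$ compact semisimple, acting componentwise with moment map $\mu=(\mu_1,\mu_2)$; after fixing an invariant identification $\mathfrak{u}^*\cong\mathfrak{u}$, the image of $\mu_i$ is the adjoint orbit $\mathcal{O}_i:=\Ad(U)(iH_{\Theta_i})$, where $H_{\Theta_i}$ lies in the closure of a fixed Weyl chamber and determines the Kirillov--Kostant--Souriau form on $\mathbb{F}_{\Theta_i}$ as in Section \ref{secexflagcplx}, and $\mu_i$ is in fact a $U$-equivariant diffeomorphism onto $\mathcal{O}_i$. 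The diagonal subgroup $L\subset U\times U$ has $\mathfrak{l}=\{(X,X):X\in\mathfrak{u}\}$; since $\mathfrak{u}$ is semisimple, $\mathfrak{l}'=\mathfrak{l}$, so $(\mathfrak{l}')^{\circ}=\{(\xi,-\xi):\xi\in\mathfrak{u}^*\}$. Hence Proposition \ref{anill} specialises to: the orbit $L(x_1,x_2)$ is isotropic if and only if $\mu_1(x_1)=-\mu_2(x_2)$.

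Next I would translate solvability of $\mu_1(x_1)=-\mu_2(x_2)$ into an orbit-matching statement. Since $\mu_i$ surjects onto $\mathcal{O}_i$, such a point exists iff $\mathcal{O}_1\cap(-\mathcal{O}_2)\neq\emptyset$; as $-\mathcal{O}_2=\Ad(U)(-iH_{\Theta_2})$ is again a single $U$-orbit and two intersecting orbits coincide, this holds iff $\mathcal{O}_1=-\mathcal{O}_2$. Now use that an adjoint orbit of $U$ meets the closed Weyl chamber in exactly one point: the representative of $-\mathcal{O}_2$ in the closed chamber is $i(-w_0H_{\Theta_2})$, where $w_0$ is the longest Weyl element, because $w_0$ carries the closed chamber onto its opposite. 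Therefore $\mathcal{O}_1=-\mathcal{O}_2$ iff $H_{\Theta_1}=-w_0H_{\Theta_2}$, i.e. iff the two chamber elements, equivalently the two subsets of simple roots, correspond under the opposition involution $\iota=-w_0$. By the definition of the dual flag, $-w_0H_{\Theta_2}=H_{\Theta_2^{\ast}}$, so the condition reads $\mathbb{F}_{\Theta_1}=\mathbb{F}_{\Theta_2^{\ast}}$, equivalently $\mathbb{F}_{\Theta_2}=\mathbb{F}_{\Theta_1^{\ast}}$ since $\iota$ is an involution. This settles both directions: when $\mathbb{F}_{\Theta_2}=\mathbb{F}_{\Theta_1^{\ast}}$ one has $\mathcal{O}_1=-\mathcal{O}_2$, so picking $\xi\in\mathcal{O}_1$ and the unique $x_1,x_2$ with $\mu_1(x_1)=\xi$, $\mu_2(x_2)=-\xi$ yields an isotropic orbit by Proposition \ref{anill}; conversely, the existence of an isotropic orbit forces $\mathcal{O}_1=-\mathcal{O}_2$ and hence the duality.

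I would close by identifying the orbit produced in the dual case: choosing the base points $iH_{\Theta_1}$ and $-iH_{\Theta_1}$, the common stabiliser in $U$ is $Z_U(H_{\Theta_1})$, so $L(x_1,x_2)$ is the graph of the $U$-equivariant opposition map $\mathbb{F}_{\Theta_1}\to\mathbb{F}_{\Theta_1^{\ast}}$, which is an anti-symplectomorphism for the two KKS forms; comparing $\dim M=2\dim\mathbb{F}_{\Theta_1}$ with $\dim L(x_1,x_2)=\dim\mathbb{F}_{\Theta_1}$ shows this isotropic orbit is in fact Lagrangian. The main obstacle is the bookkeeping in the second paragraph: one must pin down the conventions tying the KKS form on $\mathbb{F}_\Theta$ to the chamber element $H_\Theta$, the compatibility of the moment-map normalisation with $\mathfrak{u}^*\cong\mathfrak{u}$, and the description of $\mathbb{F}_{\Theta^{\ast}}$ via $H_{\Theta^{\ast}}=-w_0H_\Theta$, so that ``$\mathcal{O}_1=-\mathcal{O}_2$'' and ``$\mathbb{F}_{\Theta_2}=\mathbb{F}_{\Theta_1^{\ast}}$'' genuinely say the same thing; once that is in place and Proposition \ref{anill} is available, the rest is formal.
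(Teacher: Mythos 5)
Your proposal is correct and follows essentially the same route as the paper: both reduce the question via Proposition \ref{anill} to whether the product orbit meets $(\Delta_{\mathfrak{u}})^{\perp}=\{(X,-X)\}$, i.e.\ whether $\Ad(u_1)(iH_1)=-\Ad(u_2)(iH_2)$ has a solution, and then identify this with $-iH_1$ lying in the adjoint orbit of $iH_2$, which is precisely the duality $\Theta_2=-w_0\Theta_1$. Your explicit appeal to the unique Weyl-chamber representative of $-\mathcal{O}_2$, and the closing remarks on the graph description and the Lagrangian dimension count, only make explicit what the paper states in the surrounding discussion and in Proposition \ref{diagonalL}.
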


Acting  by subgroups of the type $\Delta ^{m}=\{\left(
u,mum^{-1}\right)\in U\times U :u\in U\}$ we obtain:
\begin{proposition*}{\bf \ref{shifted11}}
	Inside the product $\mathbb{F}_{\Theta }\times \mathbb{F}_{\Theta ^{\ast }}$,
	for each $m\in U$,
	there exists a unique isotropic orbit of the diagonal action by the subgroup
	$\Delta ^{m}$. Such an orbit is 
	Lagrangian and it is given by the graph of the map $-\Ad\left( m\right) :%
	\Ad\left( U\right) \left( iH\right) \rightarrow \Ad\left(
	U\right) \left( i\sigma \left( H\right) \right)$.
\end{proposition*}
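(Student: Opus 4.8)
The plan is to derive the statement from Proposition \ref{anill} applied to the subgroup $L=\Delta^{m}\subset U\times U$ acting on $\mathbb{F}_{\Theta}\times\mathbb{F}_{\Theta^{\ast}}=\Ad(U)(iH)\times\Ad(U)(i\sigma(H))$ with the product Kirillov--Kostant--Souriau form. The moment map here is the product $\mu(x,y)=(\mu_{1}(x),\mu_{2}(y))$ of the two (inclusion) moment maps of the factors, and it is $\Ad^{\ast}$-equivariant because each factor's map is, so the hypotheses of Proposition \ref{anill} hold. After fixing an $\Ad(U)$-invariant form on $\mathfrak{u}$ to identify $\mathfrak{u}^{\ast}\cong\mathfrak{u}$, each $\mu_{i}$ becomes the inclusion of the orbit into $\mathfrak{u}$, so $\mu(x,y)=(x,y)\in\mathfrak{u}\times\mathfrak{u}$.

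First I would compute the annihilator condition. Since $\mathfrak{u}$ is semisimple, the Lie algebra $\mathfrak{l}=\{(X,\Ad(m)X):X\in\mathfrak{u}\}$ of $\Delta^{m}$ coincides with its derived algebra $\mathfrak{l}'$, so $(\mathfrak{l}')^{\circ}=\mathfrak{l}^{\circ}$. Using invariance of the form one checks that $(x,y)\in\mathfrak{l}^{\circ}$ if and only if $x+\Ad(m^{-1})y=0$, i.e. $y=-\Ad(m)x$; by Proposition \ref{anill} the orbit $\Delta^{m}(x,y)$ is isotropic precisely in that case. Next I would identify this constraint with the graph: $-\Ad(m)$ carries $\Ad(U)(iH)$ onto $\Ad(U)(-iH)=\Ad(U)(i\sigma(H))=\mathbb{F}_{\Theta^{\ast}}$ (the middle equality being the defining relation of the dual flag, equivalently $-H$ and $\sigma(H)$ are Weyl-conjugate), so $y=-\Ad(m)x$ says exactly that $(x,y)$ lies on the graph $\Gamma_{m}$ of $-\Ad(m):\mathbb{F}_{\Theta}\to\mathbb{F}_{\Theta^{\ast}}$. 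Conversely, using $\Ad(mum^{-1})\Ad(m)=\Ad(mu)$ one sees that the $\Delta^{m}$-orbit of $(x_{0},-\Ad(m)x_{0})$ is $\{(\Ad(u)x_{0},-\Ad(m)\Ad(u)x_{0}):u\in U\}$, which is all of $\Gamma_{m}$ by transitivity of $\Ad(U)$ on $\mathbb{F}_{\Theta}$. Hence every isotropic orbit is contained in $\Gamma_{m}$ and $\Gamma_{m}$ is itself a single orbit: it is the unique isotropic orbit.

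It remains to see $\Gamma_{m}$ is Lagrangian: projection to the first factor is a diffeomorphism $\Gamma_{m}\to\mathbb{F}_{\Theta}$, and dual flags are diffeomorphic, so $\dim\Gamma_{m}=\dim\mathbb{F}_{\Theta}=\tfrac12\dim(\mathbb{F}_{\Theta}\times\mathbb{F}_{\Theta^{\ast}})$, and an isotropic submanifold of half the ambient dimension is Lagrangian. The only delicate point is the bookkeeping of the moment-map and $\mathfrak{u}\cong\mathfrak{u}^{\ast}$ conventions so that the minus sign in $-\Ad(m)$ emerges correctly in the annihilator computation; everything else is transitivity plus a dimension count. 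Alternatively, one can bypass that computation by noting that the symplectomorphism $\mathrm{id}\times\Ad(m)$ of $\mathbb{F}_{\Theta}\times\mathbb{F}_{\Theta^{\ast}}$ conjugates the diagonal (the $m=e$) action into the $\Delta^{m}$-action, reducing the whole statement to the case $m=e$, which is contained in (the proof of) Theorem \ref{noniso}.
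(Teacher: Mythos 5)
Your proposal is correct and follows essentially the same route as the paper: the paper likewise reduces to the condition that the moment image lie in the orthogonal complement $(\Delta_{\mathfrak{u}}^{m})^{\perp}=\{(X,-\Ad(m)X)\}$ of the (derived) algebra of $\Delta^{m}$ (this is exactly Proposition \ref{anill} in the orthogonal setting), identifies the resulting set with the graph of $-\Ad(m)$, checks via the $\Ad(mum^{-1})$ computation that this graph is a single $\Delta^{m}$-orbit, and concludes Lagrangianity by the dimension count. Your closing remark that $\mathrm{id}\times\Ad(m)$ conjugates the case $m=e$ into the general case is a nice shortcut not used in the paper, but the main argument coincides with theirs.
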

In particular, when $m=e$ is the identity in $U$, we prove that there exists a unique Lagrangian orbit of the diagonal action of $U$ on $\mathbb{F}_{\Theta }\times \mathbb{F}_{\Theta ^{\ast }}$  given as the graph of  $-\mathrm{id}:%
\Ad\left( U\right) \left( iH\right) \rightarrow \Ad\left(
U\right) \left( i\sigma \left( H\right) \right)$. Furthermore, as an important feature of the 
orbits by shifted diagonals is stated as:
\begin{theorem*}{\bf \ref{HamiltonianIsotopic}}
	All Lagrangian orbits in $\mathbb{F}_{\Theta }\times \mathbb{F}_{\Theta ^{\ast }}$ of Proposition \ref{shifted11} belong the same Hamiltonian isotopy class.
\end{theorem*}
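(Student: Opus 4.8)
The plan is to realize every Lagrangian orbit of Proposition \ref{shifted11} as the image of the single orbit $\mathcal{L}_{e}$ (the graph of $-\mathrm{id}$) under a Hamiltonian symplectomorphism of $\mathbb{F}_{\Theta}\times\mathbb{F}_{\Theta^{\ast}}$. Equip $F:=\mathbb{F}_{\Theta}\times\mathbb{F}_{\Theta^{\ast}}$ with the product $\omega=\omega_{\Theta}\oplus\omega_{\Theta^{\ast}}$ of Kirillov--Kostant--Souriau forms. Then $U\times U$ acts on $F$ factorwise, and, since each factor action is Hamiltonian with an $\Ad^{\ast}$-equivariant moment map, so is the product action; write its moment map as $\mu=(\mu_{\Theta},\mu_{\Theta^{\ast}})\colon F\to\mathfrak{u}^{\ast}\times\mathfrak{u}^{\ast}$. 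Each shifted diagonal sits inside $U\times U$, and the elementary identity $(e,m)(u,u)(e,m^{-1})=(u,mum^{-1})$ gives $\Delta^{m}=(e,m)\,\Delta^{e}\,(e,m)^{-1}$. Let $\Phi_{m}\colon F\to F$ be the symplectomorphism given by the action of $(e,m)\in U\times U$, that is $\Phi_{m}(x,y)=(x,\Ad(m)y)$. First I would use the conjugation identity to show that $\Phi_{m}$ maps $\Delta^{e}$-orbits bijectively onto $\Delta^{m}$-orbits; being symplectic, it carries Lagrangian ones to Lagrangian ones, so by the uniqueness in Proposition \ref{shifted11} we get $\mathcal{L}_{m}=\Phi_{m}(\mathcal{L}_{e})$. (Concretely, $\Phi_{m}$ sends the graph of $-\mathrm{id}$ to the graph of $-\Ad(m)$, which is exactly the orbit described there.)

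Next I would argue that each $\Phi_{m}$ is a Hamiltonian symplectomorphism of $(F,\omega)$. Since $U$ is compact and connected, the exponential map $\exp\colon\mathfrak{u}\to U$ is surjective, so choose $X\in\mathfrak{u}$ with $\exp X=m$. The path $t\in[0,1]\mapsto\Phi_{\exp(tX)}$ is then a smooth isotopy from $\mathrm{id}_{F}$ to $\Phi_{m}$, and it is the flow of the fundamental vector field of the one-parameter subgroup $t\mapsto(e,\exp(tX))$. Because the $U\times U$-action is Hamiltonian, that vector field is exactly the Hamiltonian vector field of the \emph{autonomous} function $h_{X}(x,y)=\langle\mu_{\Theta^{\ast}}(y),X\rangle$. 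Hence $\Phi_{m}$ is the time-one map of a Hamiltonian isotopy and, in particular, Hamiltonian isotopic to the identity.

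Combining the two steps: $\mathcal{L}_{m}=\Phi_{m}(\mathcal{L}_{e})$ is the image of $\mathcal{L}_{e}$ under a Hamiltonian isotopy, so $\mathcal{L}_{m}$ and $\mathcal{L}_{e}$ are Hamiltonian isotopic Lagrangian submanifolds of $F$ for every $m\in U$; transitivity of the relation ``being Hamiltonian isotopic'' then puts all Lagrangians of Proposition \ref{shifted11} in one Hamiltonian isotopy class. The only points I expect to require genuine care are the identification $\mathcal{L}_{m}=\Phi_{m}(\mathcal{L}_{e})$ — which relies on the explicit graph description of Proposition \ref{shifted11} and on the bookkeeping with the duality involution $\sigma$ and the dual parabolic $\Theta^{\ast}$ — and the check that the generating vector field is not merely symplectic but Hamiltonian, which is precisely where the $\Ad^{\ast}$-equivariance of the moment maps of the two factor actions is used. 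Beyond this I do not anticipate a serious obstacle; the surjectivity of $\exp$ on the compact connected group $U$ is what lets us get away with a single autonomous Hamiltonian rather than a time-dependent one.
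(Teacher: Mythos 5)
Your proposal is correct and follows essentially the same route as the paper: the paper's lemma also acts only on the second factor by $\Ad(e^{tX})$ with $e^{X}=m_2m_1^{-1}$ (using surjectivity of $\exp$ on the compact connected $U$), identifies the generating vector field as the Hamiltonian vector field of the moment-map component $\langle\,\cdot\,,X\rangle$, and checks directly that the time-one map carries one graph to the other. Your extra framing via the conjugation $\Delta^{m}=(e,m)\Delta^{e}(e,m)^{-1}$ and the uniqueness clause of Proposition \ref{shifted11} is a harmless variant of the paper's direct computation of the image of the graph.
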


Section \ref{imersions} is dedicated to the study of \emph{tight immersions}. We explore a new concept which we call \emph{infinitesimally tight} (Definition \ref{definfintight}). This notion is equivalent to the concept of locally tight  given by \cite{Oh1} (Definition \ref{tight}). In other words,
\begin{theorem*}{\bf \ref{lvi}}
	Let $G$ be a Lie group and $M$ a homogeneous space together with 
	a $G$-invariant symplectic form $\omega $.	Then a Lagrangian submanifold $\mathcal{L}\subset M$ is infinitesimally tight if and only if $\mathcal{L}$ is locally tight.
\end{theorem*}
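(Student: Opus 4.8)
The plan is to unwind both definitions and show they encode the same condition on the germ of the Lagrangian. Recall that locally tightness (Definition \ref{tight}, following \cite{Oh1}) asks, roughly, that for every Hamiltonian $X$ supported near $\mathcal{L}$ and sufficiently $C^1$-small, the intersection $\mathcal{L}\cap\varphi_X(\mathcal{L})$ realizes the Arnold--Givental lower bound $\sum b_k(\mathcal{L};\mathbb{Z}_2)$; equivalently, by the standard Weinstein neighborhood picture, every $C^1$-small exact deformation of the zero section in $T^*\mathcal{L}$ — i.e. the graph of $df$ for a small $C^1$ function $f$ on $\mathcal{L}$ — meets the zero section in at least $\sum b_k(\mathcal{L};\mathbb{Z}_2)$ points, so that $f$ has the minimal number of critical points allowed by Morse theory. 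Infinitesimal tightness (Definition \ref{definfintight}) should be the linearized version of this: it concerns the first-order data, presumably requiring that the relevant Floer/Morse differential on $\mathcal{L}$ vanishes, or that a certain cohomology computed from the $G$-action agrees with $H^*(\mathcal{L};\mathbb{Z}_2)$. The first step is therefore to write both notions as statements purely about a neighborhood of $\mathcal{L}$ in $M$, using a Weinstein tubular neighborhood identifying that neighborhood symplectomorphically with a neighborhood of the zero section in $T^*\mathcal{L}$.

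\medskip

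The essential input that makes the two notions coincide in the homogeneous setting is that $M=G/H$ carries a $G$-invariant symplectic form, so the ambient geometry is rigid: the Hamiltonian vector fields generated by $\mathfrak{g}$ span a homogeneous distribution, and the deformations of $\mathcal{L}$ that one can produce are controlled by a \emph{finite-dimensional} family coming from the $G$-action rather than the infinite-dimensional space of all Hamiltonians. Concretely, I would argue that because of invariance one may restrict attention to deformations $\varphi_X(\mathcal{L})$ with $X$ the Hamiltonian of an element of $\mathfrak{g}$ (or a one-parameter family thereof), so that the ``small Hamiltonian'' appearing in local tightness can be taken linear in the sense of the group action. For such $X$, the count $|\mathcal{L}\cap\varphi_{tX}(\mathcal{L})|$ for small $t$ is governed exactly by the critical points of the generating function, whose Hessian at $t=0$ is the infinitesimal object in Definition \ref{definfintight}. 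Thus: infinitesimally tight $\Rightarrow$ the generating function is Morse with the minimal number of critical points for all small $G$-generated deformations $\Rightarrow$ those deformations realize the Arnold--Givental bound; conversely, local tightness applied to these particular deformations forces the infinitesimal nondegeneracy condition by taking $t\to 0$ and reading off the leading term.

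\medskip

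In more detail, the forward implication (infinitesimally tight $\Rightarrow$ locally tight) goes: given infinitesimal tightness, use the homogeneity to reduce an arbitrary small Hamiltonian isotopy near $\mathcal{L}$ to one of the standard form above (this reduction is where invariance of $\omega$ and the transitivity of $G$ on $M$ are used, to trivialize the normal bundle and to show the space of relevant deformations is exhausted, up to Hamiltonian isotopy preserving the count, by $G$-generated ones); then the nondegeneracy of the infinitesimal pairing makes the generating function a Morse function, and a Morse-theoretic lower bound argument (the same one underlying Arnold--Givental in this model case) yields exactly $\sum b_k(\mathcal{L};\mathbb{Z}_2)$ intersection points, with equality because infinitesimal tightness pins down that the differential in the associated Morse complex vanishes, so all critical points survive in homology. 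For the reverse implication, assume local tightness; apply it to the one-parameter families $\varphi_{tX}$ for $X$ ranging over Hamiltonians of elements of $\mathfrak g$; expand $|\mathcal{L}\cap\varphi_{tX}(\mathcal{L})|$ as $t\to 0^+$; local tightness says this equals $\sum b_k$ for all small $t$, which forces the limiting (infinitesimal) object to be nondegenerate in the sense of Definition \ref{definfintight}, for otherwise one could produce either too few transverse intersections or a degenerate critical point that splits to give a non-minimal count for some small $t$.

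\medskip

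The main obstacle I expect is the reduction step: justifying that, for the purpose of attaining the Arnold--Givental bound, it suffices to test against the finite-dimensional family of deformations generated by $\mathfrak g$ (equivalently, that an arbitrary $C^1$-small Hamiltonian isotopy near $\mathcal{L}$ can be connected — through isotopies that do not decrease the intersection count below the minimum, or do not affect the relevant infinitesimal invariant — to a $G$-generated one). This is exactly the place where $G$-invariance of $\omega$ and the homogeneous structure of $M=G/H$ are indispensable, and it is what prevents the statement from being a purely formal unwinding of definitions; I would handle it by a Weinstein-neighborhood normal-form argument combined with an averaging / equivariant tubular neighborhood construction adapted to the $G$-action, so that the space of germs of Lagrangian deformations of $\mathcal{L}$ is organized by a $G$-equivariant model in which ``infinitesimal'' and ``local'' nondegeneracy visibly agree. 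The remaining Morse-theoretic steps are then standard and I would only sketch them, citing the relevant part of \cite{Oh1} for the local tightness side.
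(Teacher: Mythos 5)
Your proposal rests on a misreading of both definitions, and this misreading creates the ``main obstacle'' you identify --- an obstacle that is not actually part of the theorem and that you do not resolve. Definition \ref{tight} in this paper does \emph{not} quantify over arbitrary $C^1$-small Hamiltonian isotopies supported near $\mathcal{L}$: it only asks that $\#\left(\mathcal{L}\cap g(\mathcal{L})\right)=\mathrm{SB}(\mathcal{L},\mathbb{Z}_2)$ for $g\in G$ near the identity with $g(\mathcal{L})$ transverse to $\mathcal{L}$. Likewise Definition \ref{definfintight} is not a statement about a Floer/Morse differential vanishing or about a cohomology built from the $G$-action; it is an elementary count: for every $X\in\mathfrak{g}$ whose fundamental field $\widetilde{X}$ is tangent to $\mathcal{L}$ only where it vanishes and vanishes at finitely many points, that number of points equals $\mathrm{SB}(\mathcal{L},\mathbb{Z}_2)$. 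Consequently the reduction you flag as indispensable --- connecting an arbitrary small Hamiltonian isotopy to a $G$-generated one without changing the intersection count --- is not needed, and it is fortunate that it is not needed, because as stated it is a substantial claim (very likely false in general) for which you offer only the phrase ``averaging / equivariant tubular neighborhood construction'' rather than an argument. A proof built on that reduction would be proving a different, stronger statement and would be incomplete at its central step.

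The actual content of the theorem is the elementary identification, inside a Weinstein neighborhood $M\cong T^*\mathcal{L}$, of the intersection set $\mathcal{L}\cap\exp(tX)(\mathcal{L})$ for small $t$ with the zero set $f_{\mathcal{L}}(X)$ of $\widetilde{X}$ on $\mathcal{L}$: writing $\widetilde{X}(x)=i(x)\oplus v(x)$ with $i(x)\in T_x\mathcal{L}$ and $v(x)$ vertical, transversality of $X$ forces the zeros of $v$ to coincide with the zeros of $\widetilde{X}|_{\mathcal{L}}$, and for small $t$ the translate $\exp(tX)(\mathcal{L})$ is a section of $T^*\mathcal{L}$ meeting the zero section exactly at the zeros of $v$. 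Both implications then follow by equating the two counts, with no Morse complex, no Hessian nondegeneracy, and no discussion of critical points surviving in homology. You gesture at this identification in your second paragraph (``the count $|\mathcal{L}\cap\varphi_{tX}(\mathcal{L})|$ for small $t$ is governed exactly by the critical points of the generating function''), but you never carry it out, and you bury it under machinery that the definitions do not call for. To repair the proof: discard the reduction step entirely, state precisely what transversality of $X\in\mathfrak{g}$ to $\mathcal{L}$ means per Definition \ref{definfintight}, and prove the displayed identification of $\mathcal{L}\cap\exp(tX)(\mathcal{L})$ with $f_{\mathcal{L}}(X)$ in the cotangent model; the equivalence is then immediate in both directions.
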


As an example we show that the Lagrangian orbit  $S^{3}$ of \, $\mathrm{U}\left( 2\right) $ in the  flag $%
\mathbb{F}\left( 1,2\right) $ is infinitesimally tight.  In further generality, we obtain:
\begin{corollary*}{\bf \ref{shifted}}
	The  Lagrangian orbits  of type  
	$$\Gamma\left\lbrace-\Ad\left( m\right) :%
	\Ad\left( U\right) \left( iH\right) \rightarrow \Ad\left(
	U\right) \left( i\sigma \left( H\right) \right)\right\rbrace$$ corresponding to the  shifted diagonals $\Delta^m$  are  infinitesimally tight in $\mathbb{F}_{\Theta }\times \mathbb{F}_{\Theta ^{\ast }}$.
\end{corollary*}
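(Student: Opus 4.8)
The plan is to reduce, via Theorem \ref{lvi}, to verifying infinitesimal tightness of each orbit $\mathcal{L}_m := \Gamma\{-\Ad(m)\colon \Ad(U)(iH)\to \Ad(U)(i\sigma(H))\}$ and then to read off infinitesimal tightness from the classical Morse theory of height functions on flag manifolds. Indeed $M:=\mathbb{F}_{\Theta}\times\mathbb{F}_{\Theta^{\ast}}$ is a homogeneous space under $G:=U\times U$, and the symplectic form on $M$ (the appropriately signed sum of the two Kirillov--Kostant--Souriau forms that makes the graphs of Proposition \ref{shifted11} Lagrangian) is $G$-invariant; hence Theorem \ref{lvi} applies and it suffices to show each $\mathcal{L}_m$ is infinitesimally tight. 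Alternatively, by Theorem \ref{HamiltonianIsotopic} all the $\mathcal{L}_m$ lie in one Hamiltonian isotopy class, and local tightness --- equivalently, by Theorem \ref{lvi}, infinitesimal tightness --- is invariant under Hamiltonian isotopy, since conjugating a small Hamiltonian symplectomorphism by a fixed one remains small and preserves both the topology of the Lagrangian and its intersection pattern; this would reduce the whole statement to the single case $m=e$. We instead argue uniformly in $m$.

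By Proposition \ref{shifted11}, $\mathcal{L}_m=\{(x,-\Ad(m)x)\colon x\in \Ad(U)(iH)\}$, so the projection to the first factor is a diffeomorphism $\mathcal{L}_m\cong \Ad(U)(iH)\cong \mathbb{F}_{\Theta}$, and in particular $b_k(\mathcal{L}_m;\mathbb{Z}_2)=b_k(\mathbb{F}_{\Theta};\mathbb{Z}_2)$. The infinitesimal deformations of $\mathcal{L}_m$ in $M$ entering Definition \ref{definfintight} are driven by the fundamental vector fields $X_{\xi}$ of the $G$-action, $\xi=(\xi_1,\xi_2)\in\mathfrak{u}\oplus\mathfrak{u}$, whose Hamiltonian is $\langle\mu,\xi\rangle$ for the product moment map $\mu=\mu_1\oplus\mu_2$. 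Since $\omega|_{\mathcal{L}_m}=0$, one has $(\iota_{X_{\xi}}\omega)|_{\mathcal{L}_m}=d\big(\langle\mu,\xi\rangle|_{\mathcal{L}_m}\big)$, and a short computation with the identification $\mathfrak{u}^{\ast}\cong\mathfrak{u}$ shows that under $\mathcal{L}_m\cong\Ad(U)(iH)$ this function is the height function $x\mapsto\langle x,\ \xi_1-\Ad(m^{-1})\xi_2\rangle$ on the adjoint orbit. Crucially, as $(\xi_1,\xi_2)$ ranges over $\mathfrak{u}\oplus\mathfrak{u}$ the vector $\eta:=\xi_1-\Ad(m^{-1})\xi_2$ ranges over all of $\mathfrak{u}$, independently of $m$; so the family of Hamiltonians one must control is exactly the family of all height functions on the flag $\mathbb{F}_{\Theta}$.

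It now remains to invoke the classical fact (Bott; Atiyah; Guillemin--Sternberg; Frankel) that a height function $x\mapsto\langle x,\eta\rangle$ on a (co)adjoint orbit of a compact Lie group is a perfect Morse--Bott function whose integral --- hence mod-$2$ --- Poincaré polynomial equals that of the orbit, and is a perfect Morse function when $\eta$ is regular. Thus for $\eta$ regular the number of critical points of $\langle\mu,\xi\rangle|_{\mathcal{L}_m}$ equals $\sum_k b_k(\mathbb{F}_{\Theta};\mathbb{Z}_2)=\sum_k b_k(\mathcal{L}_m;\mathbb{Z}_2)$, meeting the Arnold--Givental bound, and the nonregular directions degenerate Morse--Bott-wise as required; this is precisely infinitesimal tightness of $\mathcal{L}_m$. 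Combining with Theorem \ref{lvi}, $\mathcal{L}_m$ is also locally tight.

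The main obstacle is not the geometry but the bookkeeping of definitions: one must check carefully that Definition \ref{definfintight} is exactly the demand that the closed $1$-forms $(\iota_{X_{\xi}}\omega)|_{\mathcal{L}_m}$ --- equivalently the generating Hamiltonians $\langle\mu,\xi\rangle|_{\mathcal{L}_m}$ --- be perfect in the Morse (respectively Morse--Bott) sense, that no deformations outside the span of these fundamental-vector-field contributions need be considered, and that the passage between the regular (Morse) and singular (Morse--Bott) directions is handled consistently with the $\mathbb{Z}_2$-Betti-number count. Once this is pinned down, the statement follows from the classical theory of height functions on flag manifolds together with Theorem \ref{lvi}.
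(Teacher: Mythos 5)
Your argument is correct and is essentially the paper's own: the paper proves the corollary by extending its diagonal-action computation, namely identifying the points where a fundamental vector field $(\widetilde{Y},\widetilde{Z})$ is tangent to the graph with the singular set of a single height function on $\mathbb{F}_{\Theta}=\Ad(U)(iH)$ (your direction $\xi_{1}-\Ad(m^{-1})\xi_{2}$, which correctly absorbs the shift by $m$ and still sweeps out all of $\mathfrak{u}$), and then counting those singularities for a regular direction as $\#\bigl(\mathcal{W}/\mathcal{W}_{\Theta}\bigr)=\mathrm{SB}(\mathbb{F}_{\Theta},\mathbb{Z}_{2})=\mathrm{SB}(\mathcal{L}_m,\mathbb{Z}_{2})$. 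The only cosmetic difference is that your opening appeal to Theorem \ref{lvi} and to Hamiltonian-isotopy invariance is not needed (and is not used), since the statement asks directly for infinitesimal tightness, which your uniform-in-$m$ computation establishes.
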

In   Appendix A  we describe the KKS symplectic form on adjoint orbits of orthogonal Lie groups.
Finally, in Appendix B we give a list of open problems about Lagrangian orbits.

\section{Isotropic orbits}\label{isotropic}
Let $(M,\omega)$ be a connected symplectic manifold and $\cdot:G\times M\to M$ a {\it Hamiltonian} action of a Lie group $G$ on $M$. If $\mathfrak{g}$ is the Lie algebra of $G$ and $\mathfrak{g}^\ast$ its dual vector space, this means that the action is  symplectic and that there exists a smooth map $\mu:M\to \mathfrak{g}^\ast$, called {\it moment map}, such that for all $X\in\mathfrak{g}$
\begin{equation}\label{moment1}
\textnormal{d}\hat{\mu}(X)=\iota_{\widetilde{X}}\omega
\end{equation}\label{mu}
where $\hat{\mu}(X):M\to\mathbb{R}$ is the smooth map defined by $\hat{\mu}(X)(x)=\mu(x)(X)$ and
$$
\widetilde{X}\left( x\right) =\frac{d}{dt}e^{tX}\cdot x\ _{\left\vert
	t=0\right. }\qquad x\in M,
$$
is the fundamental vector field associated to $X$. Identity (\ref{moment1}) implies that $\widetilde{X}$ is the Hamiltonian vector field of $\hat{\mu}(X)$. If $\Ad^\ast:G\to\textnormal{GL}(\mathfrak{g}^\ast)$ denotes the coadjoint representation of $G$, a moment map $\mu: M\to \mathfrak{g}^\ast$ is called
{\it  $\Ad^\ast$-equivariant} if
$$
\mu \left( g\cdot x\right) =\Ad^{\ast}\left( g\right) \mu \left( x\right)
\qquad g\in G,~x\in M.
$$%
\begin{remark}
	If $\phi_g:M\to M$ is defined by $\phi_g(x)=g\cdot x$ for all $x\in M$, since $\cdot:G\times M\to M$ is a symplectic action, then $\phi_{e^{tX}}^\ast\omega=\omega$, or equivalently, $\mathcal{L}_{\widetilde{X}}\omega=0$ for all $X\in\mathfrak{g}$. Therefore, $\widetilde{X}$ is locally Hamiltonian but
	not necessarily globally Hamiltonian. This is the reason why not every symplectic action is a Hamiltonian action. The latter happens if for instance $\textnormal{H}_{dR}^1(M,\mathbb{R})=0$. On the other hand, if the symplectic form is an exact form of a $G$-invariant 1-form, or else if $G$ is connected and semisimple (only if $\textnormal{H}^1(\mathfrak{g},\mathbb{R})=\textnormal{H}^2(\mathfrak{g},\mathbb{R})=0$), then the symplectic action has an $\Ad^\ast$-equivariant moment map, see \cite{Wallach} or \cite{smgrplie}.
\end{remark}
Recall that a submanifold $\iota: \mathcal{L}\hookrightarrow M$ of a symplectic manifold $(M,\omega)$ is called {\it isotropic} if $\iota^\ast \omega =0$. If moreover $\dim(\mathcal{L})=1/2\dim(M)$ then $\mathcal{L}$ is called {\it Lagrangian}. From now on, we assume that $\cdot: G\times M \to M$ is a Hamiltonian action for a connected symplectic manifold $(M,\omega)$ which admits an $\Ad^\ast$-equivariant moment map $\mu$. Let $L$ be a Lie subgroup of $G$ with Lie algebra $\mathfrak{l}$. The problem considered here is to describe those  orbits $Lx$ ($x\in M$) of  $L$ that are Lagrangian submanifolds of  $M$, or more generally isotropic. The following arguments use the moment map  $\mu $ to give necessary and sufficient conditions for the orbit $Lx$ to be isotropic.\\
If $X,Y\in\mathfrak{g}$, it is well known that the Poisson bracket of $\hat{\mu}(X)$ and $\hat{\mu}(Y)$ is given by
$$
\left\lbrace \hat{\mu}(X),\hat{\mu}(Y) \right\rbrace=\omega \left(\widetilde{X},\widetilde{Y}\right)
=-\widetilde{X}\cdot\hat{\mu}(Y)=\widetilde{Y}\cdot\hat{\mu}(X).
$$%
Therefore, for all $x\in M$
\begin{eqnarray*}
	\widetilde{X}\cdot\hat{\mu}(Y)\left( x\right) &=&\frac{d}{dt}\hat{\mu}(Y)\left( e^{tX}x\right)
	_{\left\vert t=0\right. }=\frac{d}{dt}\mu \left( e^{tX}x\right) _{\left\vert
		t=0\right. }\left( Y\right) \\
	&=&\frac{d}{dt}\Ad^\ast\left( e^{tX}\right) \mu \left( x\right)
	_{\left\vert t=0\right. }\left( Y\right) =\left( \ad^\ast\left( X\right)\mu \left( x\right) \right) \left( Y\right) \\
	&=&-\mu \left( x\right) \left( \left[ X,Y\right] \right) .
\end{eqnarray*}%
The above computation implies two things. The first one is that $\hat{\mu}$ defines a Lie algebra homomorphism between $\mathfrak{g}$ and $C^\infty (M)$ seen as Lie algebra with the Poisson bracket. The second one is that 
\begin{equation}\label{anula}\omega_x \left( \widetilde{X}\left( x\right) ,\widetilde{Y}%
\left( x\right) \right) =0 \qquad \textnormal{if and only if} \qquad \mu \left( x\right) \left[ Y,X%
\right] =0.\end{equation}
Recall also that the tangent space to the orbit $Lx$ at the point $x$
is given by
$$
T_{x}\left( Lx\right) =\{\widetilde{X}\left( x\right) :X\in \mathfrak{l}\}.
$$%
Therefore, $\omega _{x}$ vanishes identically on  $T_{x}\left( Lx\right) $
if and only if  $\mu \left( x\right) \left[ X,Y\right]=0$ for all $X,Y\in 
\mathfrak{l}$. Hence, we obtain the following  characterization of those
orbits of  $L$ that are isotropic.

\begin{proposition}\label{anill}
	An orbit $Lx$ is isotropic if and only if  $\mu \left(x\right) 
	$ belongs to the annihilator $\left( \mathfrak{l}^{\prime }\right) ^{\circ }$ of the 
	derived algebra $\mathfrak{l}^{\prime }$ of $\mathfrak{l}$.
\end{proposition}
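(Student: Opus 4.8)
The plan is to assemble the pieces already established just above the statement. First I would note that, since the action of $L\subseteq G$ is symplectic (each $\phi_\ell$ is a symplectomorphism), the pullback $\iota^\ast\omega$ to the orbit $Lx$ vanishes identically on $Lx$ as soon as it vanishes on the tangent space $T_x(Lx)$ at the single point $x$: indeed, tangent vectors at $\ell x$ are images under $(d\phi_\ell)_x$ of tangent vectors at $x$, and $\phi_\ell^\ast\omega=\omega$. Thus ``$Lx$ is isotropic'' is equivalent to ``$\omega_x$ vanishes identically on $T_x(Lx)$''. Using $T_x(Lx)=\{\widetilde{X}(x):X\in\mathfrak l\}$ together with the equivalence \eqref{anula}, this is in turn equivalent to $\mu(x)[X,Y]=0$ for all $X,Y\in\mathfrak l$.

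Next I would translate this into membership in an annihilator. By definition, $\mathfrak l'$ is the linear span of the brackets $[X,Y]$ with $X,Y\in\mathfrak l$, and $\mu(x)$ is a linear functional on $\mathfrak g$; hence $\mu(x)$ kills every such bracket if and only if it kills the whole subspace $\mathfrak l'\subseteq\mathfrak g$, that is, $\mu(x)\in(\mathfrak l')^\circ$. Combining the two equivalences yields the proposition.

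Since everything reduces to linear algebra together with the identity \eqref{anula} derived above, I do not expect any real obstacle; the only point worth stating with care is the reduction to a single base point $x$. If one prefers instead to verify the condition pointwise along the orbit, one can invoke $\Ad^\ast$-equivariance, $\mu(\ell x)=\Ad^\ast(\ell)\mu(x)$, together with the fact that $\mathfrak l'$ is a characteristic ideal of $\mathfrak l$, so that $\Ad(\ell)$ preserves $\mathfrak l'$ and hence $\Ad^\ast(\ell)$ preserves $(\mathfrak l')^\circ$; this makes the condition $\mu(\ell x)\in(\mathfrak l')^\circ$ manifestly independent of $\ell\in L$, in agreement with the symplectic-invariance argument.
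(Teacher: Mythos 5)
Your proposal is correct and follows essentially the same route as the paper: both arguments rest on the equivalence \eqref{anula} together with the description $T_x(Lx)=\{\widetilde X(x):X\in\mathfrak l\}$, and on the linear-algebra observation that annihilating all brackets $[X,Y]$, $X,Y\in\mathfrak l$, is the same as annihilating $\mathfrak l'$. The only (immaterial) difference lies in how the reduction to a single base point is handled: the paper verifies the condition at an arbitrary $y=gx$ and then uses $\Ad^{\ast}$-equivariance of $\mu$ together with the invariance of $\mathfrak l'$ under $\Ad(g)$ --- which is precisely the alternative you sketch in your closing paragraph --- whereas your primary argument propagates the vanishing of $\omega$ along the orbit via symplectic invariance of the $L$-action; both are valid.
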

\begin{proof}
	Choose $y\in Lx$. By observation  (\ref{anula}), the 
	tangent space $T_{y}\left(
	Lx\right) =T_{y}\left( Ly\right) $ is isotropic if and only if $\mu
	\left( y\right) \left[X,Y\right] =0$ for all $X,Y\in \mathfrak{l}$, that 
	is, if and only if $\mu \left( y\right) $ belongs to the annihilator of  $%
	\mathfrak{l}^{\prime }$. But, $\mu \left( y\right) \in \left( \mathfrak{l}%
	^{\prime }\right) ^{\circ }$ if and only if  $\mu \left( x\right) \in \left( 
	\mathfrak{l}^{\prime }\right) ^{\circ }$ since if  $y=gx$ ($g\in L$) then 
	$\mu \left( y\right) =\Ad\left( g\right) ^{\ast }\mu \left( x\right) 
	$ and therefore $\mu \left( y\right) $ annihilates $\mathfrak{l}^{\prime }$ 
	if and only if  $\mu \left( x\right) \left( \Ad\left( g\right) \mathfrak{l}%
	^{\prime }\right) =\mu \left( x\right) \left( \mathfrak{l}^{\prime }\right)
	=0$ given that $\mathfrak{l}^{\prime }$ is  invariant by every automorphism of  $\mathfrak{l}$.
\end{proof}

\begin{remark}
	\begin{itemize}
		\item It is worth noticing that  the criterion given in  Proposition \ref{anill}
		needs to be verified only at a single point of the orbit  $Lx$ given that
		the annihilator  $\left( \mathfrak{l}^{\prime }\right) ^{\circ }$ is 
		invariant by the coadjoint action.
		\item Proposition \ref{anill} can also be applied to the case $L=G$, although, 
		the reasoning can be carried out  for a pair of groups $L\subset G$. 
		If the action of  $G$ is  Hamiltonian the same is true for the action of $L$ 
		and the moment map $\mu _{L}$, as well as for the 
		moment map  $\mu $ for the restriction of the action of  $G$,
		that is, $\mu _{L}\left( x\right) =\mu \left( x\right) _{\left\vert 
			\mathfrak{l}\right. }$ directly by definition. (The examples 
		we will consider suggest to use a  pair of groups
		$L\subset G$ and to take  $M$ as a homogeneous space of $G$.)
		\item In the  particular case when $(G,\langle\cdot,\cdot\rangle)$ is an orthogonal Lie group (see Definition \ref{ort}) the moment map can be interpreted as a map with values in  $\mathfrak{g}$. In such case $\left( \mathfrak{l}^{\prime }\right) ^{\circ }$ becomes	the orthogonal complement of  $\mathfrak{l}^{\prime}$ with respect to the invariant scalar product $\langle\cdot,\cdot\rangle_e$ induced over $\mathfrak{g}$. Accordingly, an orbit $Lx$ is isotropic if and only if $\mu \left(x\right) $ belongs to the orthogonal complement of $\mathfrak{l}^{\prime }$. In particular, this is the case when $G$ is a compact Lie group where there exists an
		invariant inner product on $\mathfrak{g}$ and when $G$ is a semisimple Lie group replacing the inner product by the Cartan--Killing form.
	\end{itemize}
\end{remark}

\section{Orthogonal Lie groups}\label{ortogonal}
Let $G$ be a connected Lie group with Lie algebra $\mathfrak{g}$ and $\mathfrak{g}^\ast$ its dual vector space. The cotangent bundle of $G$ is isomorphic to the trivial vector bundle $G\times \mathfrak{g}^\ast$ through the isomorphism of vector bundles $\lambda: T^\ast (G)\to G\times \mathfrak{g}^\ast$ defined by
\begin{equation}\label{BodyCoordinates}
\lambda(g,\alpha_g)=(g,\alpha_g\circ (\textnormal{d}L_g)_e)\qquad (g,\alpha_g)\in T^\ast(G). 
\end{equation}
By means of left and right multiplications on $G$ we can define two natural left actions of $G$ on itself, called left and right action, which are given by $L:G\times G\to G$ and $R:G\times G \to G$ and are defined by $L(g,h)=L_g(h)=gh$ and $R(g,h)=R_{g^{-1}}(h)=hg^{-1}$ respectively. These actions can be lifted to $T^\ast(G)$ allowing us to define two left action of $G$ on $T^\ast(G)$ as follows.
$$	\widetilde{L}:G\times T^\ast(G)\to T^\ast(G),\qquad \widetilde{L}(g,(h,\alpha_h))=(gh,\alpha_h\circ (\textnormal{d} L_{g^{-1}})_{gh})\quad\textnormal{and}
$$
$$
\widetilde{R}:G\times T^\ast(G)\to T^\ast(G),\qquad \widetilde{R}(g,(h,\alpha_h))=(hg^{-1},\alpha_h\circ (\textnormal{d} R_g)_{hg^{-1}}).
$$
On the coordinates defined by the formula (\ref{BodyCoordinates}) these left actions are expressed as
\begin{equation}\label{LeftAction}
\widetilde{L}_\lambda(g,(h,\alpha))=(\lambda\circ \widetilde{L}_g\circ\lambda^{-1})(h,\alpha)=(gh,\alpha)\quad\textnormal{and}
\end{equation}
\begin{equation}\label{RightAction}
\widetilde{R}_\lambda(g,(h,\alpha))=(\lambda\circ \widetilde{R}_g\circ\lambda^{-1})(h,\alpha)=(hg^{-1},\Ad^\ast(g)(\alpha)).
\end{equation}
If $\theta_0$ denotes the Liouville 1-form of $T^\ast(G)$, then on the coordinates (\ref{BodyCoordinates}) it is given by $\theta=(\lambda^{-1})^\ast \theta_0$ as
$$
\theta_{(g,\alpha)}(v(g),\beta)=\alpha((\textnormal{d}L_{g^{-1}})_g (v(g))),
$$
and the canonical symplectic form $\omega_0$ of $T^\ast(G)$ is $\omega=(\lambda^{-1})^\ast \omega_0=-\textnormal{d}\theta$ which is given explicitly as
\begin{eqnarray*}
	\omega_{(g,\alpha)}((v(g),\beta),(u(g),\gamma)) & = & \gamma((\textnormal{d}L_{g^{-1}})_g (v(g)))-\beta((\textnormal{d}L_{g^{-1}})_g (u(g)))\\
	& + & \alpha([(\textnormal{d}L_{g^{-1}})_g (v(g)),(\textnormal{d}L_{g^{-1}})_g (u(g))]),
\end{eqnarray*}
where $(g,\alpha)\in G\times \mathfrak{g}^\ast$ and $(v(g),\beta),\ (u(g),\gamma)\in T_{(g,\alpha)}(G\times\mathfrak{g}^\ast)\approx T_g G\times\mathfrak{g}^\ast$.\\
In these terms it is simple to check that (\ref{LeftAction}) and (\ref{RightAction}) are symplectic actions of $G$ over $T^\ast(G)\approx G\times \mathfrak{g}^\ast$. Therefore, as $\omega_0$ (and $\omega$) is an exact symplectic form we have the following $\Ad^\ast$-equivariant moment maps, see \cite{Abraham-Marsden}. For the left action
$$
\mu_L: T^\ast (G)\to\mathfrak{g}^\ast,\qquad \mu_L(g,\alpha_g)=\alpha_g\circ (\textnormal{d}R_g)_e,
$$
which on the coordinates of $G\times \mathfrak{g}^\ast$ is given by
$$
\mu_L^\lambda: G\times \mathfrak{g}^\ast\to\mathfrak{g}^\ast,\qquad \mu_L^\lambda(g,\alpha)=(\mu_L\circ \lambda^{-1})(\alpha)=\Ad^\ast(g)(\alpha).
$$
Analogously, for the right action
$$
\mu_R: T^\ast (G)\to\mathfrak{g}^\ast,\qquad \mu_R(g,\alpha_g)=-\alpha_g\circ (\textnormal{d}L_g)_e,
$$
and also
$$
\mu_R^\lambda: G\times \mathfrak{g}^\ast\to\mathfrak{g}^\ast,\qquad \mu_R^\lambda(g,\alpha)=(\mu_L\circ \lambda^{-1})(\alpha)=-\alpha.
$$
Let $(G,\langle\cdot,\cdot\rangle)$ be an orthogonal Lie group and $(\mathfrak{g},\langle\cdot,\cdot\rangle_e)$ is its respective orthogonal Lie algebra. If $\mathfrak{z}(\mathfrak{g})$ denotes the center of $\mathfrak{g}$ then we obtain
\begin{proposition}\label{anillCotangent}
	Let $(g,\alpha)$ be an element of $G\times \mathfrak{g}^\ast$. Then the orbit $G(g,\alpha)$ by the left action of $G$ on $G\times \mathfrak{g}^\ast$ is isotropic if and only if $\Ad(g)(X_\alpha)\in\mathfrak{z}(\mathfrak{g})$. On the other hand, the orbit $G(g,\alpha)$ by the right action of $G$ on $G\times \mathfrak{g}^\ast$ is isotropic if and only if $X_\alpha\in\mathfrak{z}(\mathfrak{g})$.
\end{proposition}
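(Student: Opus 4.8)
The plan is to invoke Proposition \ref{anill} with $L=G$ for each of the two actions and then to read off the annihilator condition through the invariant scalar product. First I would fix the musical isomorphism $\flat\colon\mathfrak{g}\to\mathfrak{g}^\ast$, $X\mapsto\langle X,\cdot\rangle_e$, and write $X_\alpha=\flat^{-1}(\alpha)$ for $\alpha\in\mathfrak{g}^\ast$. Ad-invariance of $\langle\cdot,\cdot\rangle_e$ gives $\flat\circ\Ad(g)=\Ad^\ast(g)\circ\flat$, so under $\flat^{-1}$ the moment map value $\mu_L^\lambda(g,\alpha)=\Ad^\ast(g)(\alpha)$ corresponds to $\Ad(g)(X_\alpha)$, while $\mu_R^\lambda(g,\alpha)=-\alpha$ corresponds to $-X_\alpha$.

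The single computational point is to identify $\flat^{-1}\big((\mathfrak{g}')^\circ\big)$. Using $\langle[X,Y],Z\rangle_e=-\langle Y,[X,Z]\rangle_e$, an element $Z$ is orthogonal to the derived algebra $\mathfrak{g}'=[\mathfrak{g},\mathfrak{g}]$ if and only if $\langle Y,[X,Z]\rangle_e=0$ for all $X,Y\in\mathfrak{g}$, i.e. (by nondegeneracy) if and only if $[X,Z]=0$ for all $X$, i.e. if and only if $Z\in\mathfrak{z}(\mathfrak{g})$. Hence $(\mathfrak{g}')^\circ=\flat\big(\mathfrak{z}(\mathfrak{g})\big)$, which is exactly the orthogonal-case statement in the third bullet of the remark after Proposition \ref{anill}.

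Putting the two together: by Proposition \ref{anill}, the orbit $G(g,\alpha)$ for the left action is isotropic if and only if $\mu_L^\lambda(g,\alpha)\in(\mathfrak{g}')^\circ$, which under $\flat^{-1}$ reads $\Ad(g)(X_\alpha)\in\mathfrak{z}(\mathfrak{g})$; and for the right action it is isotropic if and only if $\mu_R^\lambda(g,\alpha)=-\alpha\in(\mathfrak{g}')^\circ$, i.e. $X_\alpha\in\mathfrak{z}(\mathfrak{g})$. I do not anticipate a genuine obstacle: the proof is essentially the one-line invariance computation identifying $(\mathfrak{g}')^\circ$ with $\mathfrak{z}(\mathfrak{g})$, together with careful bookkeeping of the identification $\mathfrak{g}\cong\mathfrak{g}^\ast$ and of the convention for $\Ad^\ast$ used in \eqref{LeftAction}--\eqref{RightAction}. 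One should also recall, as in the remark following Proposition \ref{anill}, that the proposition applies verbatim to the case $L=G$, and that both $(\mathfrak{g}')^\circ$ and $\mathfrak{z}(\mathfrak{g})$ are invariant under the relevant actions, so the criterion does not depend on the chosen point of the orbit.
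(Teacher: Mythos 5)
Your proposal is correct and follows essentially the same route as the paper: both apply Proposition \ref{anill} with $L=G$, use the $\Ad$-invariance of $\langle\cdot,\cdot\rangle_e$ to identify the moment map values $\Ad^\ast(g)(\alpha)$ and $-\alpha$ with $\Ad(g)(X_\alpha)$ and $-X_\alpha$, and conclude from nondegeneracy that lying in $(\mathfrak{g}')^{\circ}$ amounts to centralizing $\mathfrak{g}$. The only cosmetic difference is that you isolate the identification $(\mathfrak{g}')^{\circ}=\flat(\mathfrak{z}(\mathfrak{g}))$ as a separate step, whereas the paper carries out the same computation inline on $\mu_L^\lambda(g,\alpha)([X,Y])$ and $\mu_R^\lambda(g,\alpha)([X,Y])$.
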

\begin{proof}
	Let $(g,\alpha)$ be an element of $G\times \mathfrak{g}^\ast$ and $X_\alpha$ the unique element of $\mathfrak{g}$ such that $\alpha(\cdot)=\langle X_\alpha,\cdot\rangle_e$. As $\langle\cdot,\cdot\rangle$ is a bi-invariant pseudo-metric over $G$, then $\langle \Ad(g)(X),\Ad(g)(Y)\rangle_e=\langle X,Y\rangle_e$ for all $g\in G$ and $X,Y\in \mathfrak{g}$. As $\langle\cdot,\cdot\rangle_e$ is an invariant scalar product on $\mathfrak{g}$, by Proposition \ref{anill} the orbit $G(g,\alpha)$ by the left action of $G$ on $G\times \mathfrak{g}^\ast$ is isotropic if and only if $\mu_L^\lambda(g,\alpha)([X,Y])=0$ for all $X,Y\in\mathfrak{g}$, but
	\begin{eqnarray*}
		\mu_L^\lambda(g,\alpha)([X,Y]) & = &  \Ad^\ast(g)(\alpha)([X,Y])=\alpha(\Ad_{g^{-1}}([X,Y]))\\
		& = & \langle X_\alpha,\Ad(g^{-1})([X,Y])\rangle_e = \langle \Ad(g)(X_\alpha),[X,Y]\rangle_e\\
		& = & \langle [\Ad(g)(X_\alpha),X],Y\rangle_e.
	\end{eqnarray*}
	Therefore, the orbit $G(g,\alpha)$ is isotropic if and only if $\langle [\Ad(g)(X_\alpha),X],Y\rangle_e=0$ for all $X,Y\in\mathfrak{g}$. As $\langle\cdot,\cdot\rangle_e$ is nondegenerate the above happens if and only if $[\Ad(g)(X_\alpha),X]=0$ for all $X\in\mathfrak{g}$, that is, $\Ad(g)(X_\alpha)\in\mathfrak{z}(\mathfrak{g})$.\\
	By a similar way, for the right action we have that
	$$
	\mu_R^\lambda(g,\alpha)([X,Y])=-\alpha([X,Y])=- \langle X_\alpha,[X,Y]\rangle_e=-\langle [X_\alpha,X],Y\rangle_e.
	$$
	Thus, the orbit $G(g,\alpha)$ by the right action of $G$ on $G\times \mathfrak{g}^\ast$ is isotropic if and only if $\langle [X_\alpha,X],Y\rangle_e=0$ for all $X,Y\in\mathfrak{g}$, that is, $X_\alpha\in \mathfrak{z}(\mathfrak{g})$.
\end{proof}
An immediate consequence of the previous Proposition is the following.

\begin{corollary}\label{SemisimpleCaseCotangent}
	Let $G$ be a semisimple Lie group. The only orbits by the left and right actions of $G$ on $G\times \mathfrak{g}^\ast$ that are isotropic are of the form $G(g,0)$ for all $g\in G$. Such orbits are Lagrangian and Hamiltonian isotopic to $G$.
\end{corollary}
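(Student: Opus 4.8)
The plan is to specialize Proposition~\ref{anillCotangent} using the fact that a semisimple Lie algebra has trivial center. First note that a semisimple $G$ is an orthogonal Lie group with respect to its Cartan--Killing form, as observed in the remark at the end of Section~\ref{isotropic}, so the proposition applies. Since $\mathfrak{z}(\mathfrak{g})=\{0\}$, the left-action criterion $\Ad(g)(X_\alpha)\in\mathfrak{z}(\mathfrak{g})$ becomes $\Ad(g)(X_\alpha)=0$, and as $\Ad(g)$ is invertible this forces $X_\alpha=0$, hence $\alpha=0$ by nondegeneracy of $\langle\cdot,\cdot\rangle_e$; similarly the right-action criterion $X_\alpha\in\mathfrak{z}(\mathfrak{g})$ gives $\alpha=0$. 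Therefore, for both actions, an orbit is isotropic only if it passes through a point of the form $(g,0)$.

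Next I would check that all of these orbits coincide with the zero level. By \eqref{LeftAction} the left orbit through $(g,0)$ is $\{(hg,0):h\in G\}=G\times\{0\}$, and by \eqref{RightAction} the right orbit through $(g,0)$ is $\{(gh^{-1},0):h\in G\}=G\times\{0\}$, since in each case $h\mapsto hg$ (resp.\ $h\mapsto gh^{-1}$) is a bijection of $G$. So there is exactly one isotropic orbit for each action, namely $\mathcal{L}:=G\times\{0\}$, which passes through every $(g,0)$, and it is an embedded closed submanifold.

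Then the Lagrangian claim is a dimension count: $\dim\mathcal{L}=\dim G=\tfrac12\dim(G\times\mathfrak{g}^{\ast})=\tfrac12\dim T^{\ast}(G)$, so the isotropic submanifold $\mathcal{L}$ is Lagrangian. For the last assertion, note that $\mathcal{L}=\lambda(0_G)$ is the image under the symplectomorphism $\lambda$ of the zero section $0_G\subset T^{\ast}(G)$, which is diffeomorphic to $G$ via the bundle projection; identifying $G$ with $0_G$ in the usual way, the orbit $\mathcal{L}$ is literally the zero section, hence Hamiltonian isotopic to it through the constant isotopy generated by the zero Hamiltonian.

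The hard part will be essentially nil: everything follows from Proposition~\ref{anillCotangent} by a one-line computation. The only points that deserve a sentence of explanation are the observation that the seemingly different orbits $G(g,0)$, $g\in G$, are one and the same submanifold, and the clarification that ``Hamiltonian isotopic to $G$'' is to be read with $G$ identified with the zero section of $T^{\ast}(G)$, which makes that last claim tautological once the orbit has been identified with it.
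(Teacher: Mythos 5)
Your proposal is correct and follows essentially the same route as the paper: the paper's proof likewise specializes Proposition~\ref{anillCotangent}, using $\mathfrak{z}(\mathfrak{g})=0$ and the invertibility of $\Ad(g)$ to force $\alpha=0$. You additionally spell out the dimension count for the Lagrangian claim and the identification of the orbit with the zero section for the Hamiltonian isotopy, details the paper leaves implicit with ``the result follows immediately.''
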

\begin{proof}
	As $G$ is a semisimple Lie group, then $\mathfrak{g}$ is a semisimple Lie algebra. Therefore $\mathfrak{z}(\mathfrak{g})=0$. Thus, as $\Ad(g):\mathfrak{g}\to\mathfrak{g}$ is a linear isomorphism, the result  follows immediately.
\end{proof}

\section{Complex flag manifolds\label{secexflagcplx}}
Let $U$ be a compact semisimple Lie group with Lie algebra $\mathfrak{u}$. The adjoint orbits of $U$ in  $\mathfrak{u}$ are the  flags of the 
complex group $G$ which has the Lie algebra  $\mathfrak{g}=\mathfrak{u}_{%
	\mathbb{C}}$. If $\langle\cdot,\cdot\rangle$ denotes the Cartan--Killing form on $\mathfrak{u}$, the Kirillov--Kostant--Souriau (KKS) symplectic form 
is given by  $$\omega _{x}\left( \widetilde{X}\left( x\right) ,\widetilde{Y}%
\left( x\right) \right) =\langle x,\left[ X,Y\right] \rangle $$ and the  Hamitonian vector field  $%
\widetilde{X}=\ad\left( X\right) $ corresponds to the
Hamiltonian function  $\hat{\mu}(X)\left( x\right) =\langle x,X\rangle $. Therefore,
the moment map  $\mu $ is the identity map,
which is (evidently) equivariant (for more details see Appendix \ref{OrthogonalLieGroups}).

\begin{remark} Adjoint orbits are the only homogeneous spaces	of a compact semi-simple group  $U$ for which 
	the action of  $U$ is  Hamiltonian. It is so because if  the action of $U$
	on a manifold $M$ is transitive and  Hamiltonian, then the  moment map 
	$\mu $ is a covering  space over the adjoint orbit (the complex flag).
	Since complex flags are simply connected, we conclude that  $M$ is 
	itself the adjoint orbit, see \cite[p.\thinspace 341]{smgrplie}.
\end{remark}

Let $\mathfrak{t}=i\mathfrak{h}_{\mathbb{R}}$ be the  Lie algebra of a 
maximal torus $T=\langle \textnormal{exp}\mathfrak{t}\rangle=\textnormal{exp}\mathfrak{t}$ in $U$. Here $\mathfrak{h}_\mathbb{R}$ is defined as follows. As the restriction of the Cartan-Killing form to $\mathfrak{t}$ is nondegenerate, if $\alpha$ is a root of $\mathfrak{t}$, there exists a unique $H_\alpha\in\mathfrak{t}$ such that $\alpha(\cdot)=\langle H_\alpha,\cdot\rangle$. The real subspace generated by $H_\alpha$, with $\alpha$ root of $\mathfrak{t}$, is denoted by $\mathfrak{h}_\mathbb{R}$. Then every adjoint orbit has the form  $\Ad%
\left( U\right) \left( iH_{0}\right) $ with  $H_{0}\in \mathfrak{h}_{\mathbb{R}%
}$, which may be chosen in the closure of the positive Weyl chamber. According to Section \ref{isotropic},
a  subgroup $L\subset U$ with  Lie algebra $%
\mathfrak{l}$ admits an isotropic orbit in  $\Ad\left(
U\right) \left( iH_{0}\right) $ if and only if  $\Ad\left( U\right)
\left( iH_{0}\right) \cap \left( l^{\prime }\right) ^{\bot }\neq \emptyset $,
that is, if there exists  $u\in U$ such that $\Ad\left( u\right) \left(
iH_{0}\right) \in \left( l^{\prime }\right) ^{\bot }$. In such a case, the isotropic
$L$-orbits are the orbits of  $X\in \Ad%
\left( U\right) \left( iH_{0}\right) \cap \left( l^{\prime }\right) ^{\bot }$. Focusing on the subgroup instead, these observations can be reinterpreted as follows. If $L\subset U$ is a proper subgroup, then $L$ has an 
isotropic orbit in some flag $\Ad\left( U\right) \left( iH_{0}\right) $ with $H_{0}\neq 0$. The reason is that
if  $L$ is proper then  $\left( l^{\prime }\right) ^{\bot }\neq \{0\}$ and
if $0\neq X\in \left( l^{\prime }\right) ^{\bot }$ then $X$ belongs to 
some orbit $\Ad\left( U\right) \left( iH_{0}\right) $ with $%
H_{0}\neq 0$. A particular case is the orbit of  $L$ through the origin  $x_{0}=iH_{0}$. It 
is isotropic if and only if  $l^{\prime }\subset \left( iH_{0}\right)
^{\bot }$. A sufficient condition is that $l^{\prime
}\subset \mathfrak{t}^{\bot }$.

\begin{example}\label{su}
	Choose $U=\mathrm{SU}\left( n\right) $ and $L=\mathrm{SO}\left( n\right) $.
	Take $\mathfrak{t}\subset \mathfrak{su}\left( n\right) $ as the 
	subalgebra of diagonal matrices ($\mathrm{tr}=0$). A matrix in  $%
	\mathfrak{so}\left( n\right) $ (real anti-symmetric matrix) has
	zeros in the  diagonal which implies that $\mathfrak{so}\left( n\right)
	\subset \mathfrak{t}^{\bot }$. Thus, the orbit of  $\mathrm{SO}%
	\left( n\right) $ through the origin of any  flag is isotropic. Since
	these orbits have half the dimension of the respective  flags they are in fact
	Lagrangian orbits.
	This example gives an instance of the well-known construction  of the immersion of real 
	flags into complex flags.
\end{example}

\begin{example}\label{Gsu}
	Example \ref{su} may be generalized as follows: choose a
	Weyl basis of  $\mathfrak{g}=\mathfrak{u}_{\mathbb{C}}$ containing $X_{\alpha }\in 
	\mathfrak{g}_{\alpha }$ with $\left[ X_{\alpha },X_{\beta }\right] =m_{\alpha
		,\beta }X_{\alpha +\beta }$. For each root $\alpha $ define $A_{\alpha
	}=X_{\alpha }-X_{-\alpha }$. Then, $A_{\alpha }\in \mathfrak{u}$ (by the 
	canonical construction of  $\mathfrak{u}$) and 
	$$
	{[}A_{\alpha },A_{\beta }{]}=m_{\alpha ,\beta }A_{\alpha +\beta }+m_{-\alpha
		,\beta }A_{\alpha -\beta }.
	$$%
	Consequently, the subspace $\mathfrak{l}$ generated by the $A_{\alpha }$'s 
	is a  subalgebra perpendicular to the  Cartan subalgebra.
	Therefore, for any  Lie group  $L$ with Lie algebra $\mathfrak{l}
	$, the orbits $Lx_{0}$ through the origin of the  flags are isotropic 
	submanifolds. Actually, these orbits are Lagrangians since they have half the dimension
	of the flags.
\end{example}

\begin{example} Choose  $H\in \mathfrak{t}$ and let $U_{H}=\{u\in U:\Ad\left(
	u\right) H =H \}$
	be the centralizer of  $H$ in $U$. Its Lie algebra is  
	$$\mathfrak{u}_{H}= \{X\in \mathfrak{u}:\left[ H,X\right] =0\} = \mathfrak{t}\oplus \sum_{\alpha \left( H\right) =0}\mathfrak{u}_{\alpha },$$
	where $\mathfrak{u}_{\alpha }=\mathfrak{u}\cap \left( \mathfrak{g}_{\alpha
	}\oplus \mathfrak{g}_{-\alpha }\right) $. For example, if $H$ is regular,
	then $\mathfrak{u}_{H}=\mathfrak{t}$ and since $\mathfrak{t}$ is
	abelian, $\mathfrak{t}'=\{0\}$, and accordingly  every flag (adjoint orbit)
	intersects $\left( \mathfrak{t}^{\prime }\right) ^{\bot }=\mathfrak{u}$ and
	therefore $U_{H}$ has isotropic orbits in all  flags. In fact, all these
	orbits of  $T$ are isotropic since $\mathfrak{t%
	}$ is Abelian.
\end{example}

\begin{example} \label{example-su3}To obtain a more interesting example, take $\mathfrak{u}=%
	\mathfrak{su}\left( 3\right) $ and
	$$
	H=i\mathrm{diag}\{2,-1,-1\}.
	$$%
	Then $\mathfrak{u}_{H}$ is given by matrices of the form
	\begin{equation}\label{diag}
	\left( 
	\begin{array}{cc}
	it & 0 \\ 
	0 & A%
	\end{array}%
	\right)
	\end{equation}%
	with $A\in \mathfrak{u}\left( 2\right) $ and $it+\mathrm{tr}A=0$. The derived
	algebra $\mathfrak{u}_{H}^{\prime }$ is given by the matrices in  $%
	\mathfrak{u}_{H}$ such that $t=0$, $A\in \mathfrak{su}\left( 2\right) $ and $%
	\dim _{\mathbb{R}}\mathfrak{u}_{H}^{\prime }=3$. The orthogonal complement $%
	\left( \mathfrak{u}_{H}^{\prime }\right) ^{\bot }$ is the   $5$-dimensional space of matrices 
	\begin{equation}
	X=\left( 
	\begin{array}{ccc}
	2it & z & w \\ 
	-\overline{z} & -it & 0 \\ 
	-\overline{w} & 0 & -it%
	\end{array}%
	\right) \qquad t\in \mathbb{R},z,w\in \mathbb{C}.
	\label{formatrizortogonalex1}
	\end{equation}%
	This orthogonal complement intersects the 3 types of adjoint 
	orbits which give the flags $\mathbb{CP}^{2}$, $\mathrm{Gr}_{2}\left( 3,%
	\mathbb{C}\right) $ and  $\mathbb{F}\left( 1,2\right) $. In the case of  $\mathbb{F}%
	\left( 1,2\right) $ it is easy to find a matrix in   $\left( 
	\mathfrak{u}_{H}^{\prime }\right) ^{\bot }$ which is  regular. For example,
	the matrix
	$$\left( 
	\begin{array}{ccc}
	0 & 1 & 0 \\ 
	-1 & 0 & 0 \\ 
	0 & 0 & 0%
	\end{array}%
	\right)
	\in \left( \mathfrak{u}_{H}^{\prime }\right) ^{\bot }
	$$
	which has eigenvalues $i,-i,0$. This matrix is regular and its adjoint orbit is the maximal flag. For the cases of partial flags the calculations require a further details.
	
	\vspace{3mm}
	\noindent {\sc (i) The case of  $\mathbb{CP}^{2}$}. Here we need to find a matrix in  $\left( 
	\mathfrak{u}_{H}^{\prime }\right) ^{\bot }$ which is  conjugate to a matrix of 
	the type  $D=\mathrm{diag}\{i2a,-ia,-ia\}$ with $a>0$. Any matrix   $X$ in (\ref%
	{formatrizortogonalex1}) has trace $0$  and its characteristic polynomial
	has the form $\lambda ^{3}+F\lambda +G$ with $F=3t^{2}+\left\vert
	z\right\vert ^{2}+\left\vert w\right\vert ^{2}$ and $G=-\det X=it\left(
	2t^{2}+\left\vert w\right\vert ^{2}+\left\vert z\right\vert ^{2}\right) $. 
	The characteristic polynomial of  $D$ is $\lambda ^{3}+3a^{2}\lambda +2ia^{3}$. The two matrices are conjugate  if and only if their characteristic 
	polynomials coincide since both are anti-Hermitian. This happens
	if and only if 
	\begin{eqnarray*}
		2a^{3} &=&t\left( 2t^{2}+\left\vert z\right\vert ^{2}+\left\vert
		w\right\vert ^{2}\right) \\
		3a^{2} &=&3t^{2}+\left\vert z\right\vert ^{2}+\left\vert w\right\vert ^{2}.
	\end{eqnarray*}%
	If $t=0$, then $\det X=0$ and the eigenvalues of  $X$ are $0$ and $\pm i%
	\sqrt{A}$ (since $A>0$). In such case, $X$ is  regular and not
	conjugate to $D$. On the other hand, we can multiply both matrices by 
	a positive constant without affecting conjugation. Thus, it is 
	enough to verify the existence of solutions for  $t=\pm 1$. Setting $t=1$, the previous equations become
	\begin{eqnarray*}
		2a^{3} &=&2+\left\vert z\right\vert ^{2}+\left\vert w\right\vert ^{2} \\
		3a^{2} &=&3+\left\vert z\right\vert ^{2}+\left\vert w\right\vert ^{2}
	\end{eqnarray*}%
	which is equivalent to $2a^{3}-2=3a^{2}-3=\left\vert w\right\vert
	^{2}+\left\vert z\right\vert ^{2}$. The first equality gives the polynomial
	$2a^{3}-3a^{2}+1=0$
	which has roots  $1$ with multiplicity $2$ and $-\frac{1}{2}$. Thus,
	we must have $a=1$. However, if  $a=1$ then $2a^{3}-2=0$ which
	implies that $z=w=0$. Hence the only possible solution is obtained by taking $X=%
	\mathrm{diag}\{2i,-i,-i\}$. On the other hand, if  $t=-1$ then the equations become
	\begin{eqnarray*}
		2a^{3} &=&-2-\left\vert z\right\vert ^{2}-\left\vert w\right\vert ^{2} \\
		3a^{2} &=&3+\left\vert z\right\vert ^{2}+\left\vert w\right\vert ^{2}
	\end{eqnarray*}%
	that is , $-2a^{3}-2=3a^{2}-3=\left\vert w\right\vert ^{2}+\left\vert
	z\right\vert ^{2}$. The first equality gives the polynomial $%
	2a^{3}+3a^{2}-1=0$, which has roots $-1$ with multiplicity $2$ and $%
	\frac{1}{2}$. The only possible solution here is obtained by taking $a=1/2$ which
	would give the contradiction $\left\vert w\right\vert ^{2}+\left\vert z\right\vert
	^{2}=-9/4$. Therefore a conjugation does not exist. Summing up, the only possible  isotropic orbit in the projective plane is 
	the trivial one (reduced to a point) passing through $H=\mathrm{diag}\{2i,-i,-i\}$.
	\vspace{3mm}
	
	\noindent{\sc (ii) The case of  $\mathrm{Gr}_{2}\left( 3,\mathbb{C}\right) $}. We need to find a 
	matrix in  (\ref{formatrizortogonalex1}) conjugate to $\mathrm{diag}%
	\{ia,ia,-2ia\}$ with $a>0$. The equations are similar to the ones just considered. 
	The characteristic polynomial now becomes $\lambda ^{3}+3a^{2}\lambda -2ia^{3}$ and
	the equations required for existence of a conjugation are
	\begin{eqnarray*}
		-2a^{3} &=&t\left( 2t^{2}+\left\vert z\right\vert ^{2}+\left\vert
		w\right\vert ^{2}\right) \\
		3a^{2} &=&3t^{2}+\left\vert z\right\vert ^{2}+\left\vert w\right\vert ^{2}.
	\end{eqnarray*}%
	Once again, it is enough to verify the cases when $t=\pm 1$. For $t=1$ we have $-2a^{3}-2=3a^{2}-3=\left\vert z\right\vert ^{2}+\left\vert
	w\right\vert ^{2}$. The first equality is the same one found in the second case 
	of the previous example, and gives no solution. On the other hand, for $t=-1$ we have $-2a^{3}+2=3a^{2}-3=\left\vert z\right\vert ^{2}+\left\vert
	w\right\vert ^{2}$. The polynomial is  $2a^{3}+3a^{2}-5=0$ whose
	roots are $1$ and $-\frac{5}{4}\pm \frac{1}{4}i\sqrt{15}$. Thus, we must have
	$a=1$ and consequently $\left\vert z\right\vert ^{2}+\left\vert
	w\right\vert ^{2}=0$, that is, $z=w=0$. Therefore the only solution 
	is the orbit of  $H_{1}=\mathrm{diag}\{i,i,-2i\}$.\\
	Opposite to the case of  $\mathbb{C}P^{2}$, this isotropic orbit in  $%
	\mathrm{Gr}_{2}\left( 3,\mathbb{C}\right) $ is not trivial. In fact,
	a generic matrix in  $\mathfrak{u}_{H}$ has the form
	$$
	Y=\left( 
	\begin{array}{ccc}
	2it & 0 & 0 \\ 
	0 & -it & z \\ 
	0 & -\overline{z} & -it%
	\end{array}%
	\right) \qquad t\in \mathbb{R},z\in \mathbb{C}.
	$$%
	If $H$ is the  diagonal matrix in  (\ref{diag}), then
	$$
	\ad\left( Y\right) H=-\left[ H,Y\right] =\left( 
	\begin{array}{ccc}
	0 & 0 & 0 \\ 
	0 & 0 & -3iz \\ 
	0 & 3i\overline{z} & 0%
	\end{array}%
	\right) .
	$$%
	The  tangent space to the orbit is  generated by the latter matrices
	with $z$ varying in $\mathbb{C}$. Hence, the orbit has
	real dimension $2$. Since $\dim _{\mathbb{R}}\mathrm{Gr}_{2}\left( 3,
	\mathbb{C}\right) =4$, this isotropic orbit is in fact
	Lagrangian.
	\vspace{3mm}
	
	\noindent{\sc (iii) The case of $\mathbb{F}\left( 1,2\right) $}.
	To conclude this example we need to analyze the isotropic  
	orbits of  $U_{H}$ in the  maximal flag $\mathbb{F}=\mathbb{F}\left( 1,2\right) $. To do so we need to choose a realization of   $\mathbb{F}$
	as adjoint orbit $\Ad\left( U\right) \left( iH_{0}\right) $ and
	to analyze the orbits of the action of  $U_{H}$ in the intersection
	of this orbit with the orthogonal  complement $\left( \mathfrak{u}%
	_{H}^{\prime }\right) ^{\bot }$, which is the  $5$ dimensional space formed 
	by the matrices
	\begin{equation}\label{perpmatrix}
	X=\left( 
	\begin{array}{ccc}
	2it & z & w \\ 
	-\overline{z} & -it & 0 \\ 
	-\overline{w} & 0 & -it%
	\end{array}%
	\right) \qquad t\in \mathbb{R},z,w\in \mathbb{C}.
	\end{equation}%
	Choose $iH_{0}=\mathrm{diag}\{i,0,-i\}$, which is a  regular element.
	Then, the intersection $\left( \mathfrak{u}_{H}^{\prime }\right) ^{\bot
	}\cap \Ad\left( U\right) \left( iH_{0}\right) $ is formed by  
	matrices of the form (\ref{perpmatrix}), 
	whose eigenvalues are $0$ and $\pm i$, since 
	\textrm{A}$\mathrm{d}\left( U\right) \left( iH_{0}\right) $ is the set
	of matrices in  $\mathfrak{su}\left( 3\right) $ which have the same 
	eigenvalues as $iH_{0}$. Since $0$ is an eigenvalue of $X\in \left( 
	\mathfrak{u}_{H}^{\prime }\right) ^{\bot }\cap \Ad\left( U\right)
	\left( iH_{0}\right) $, we must have $\det X=-it\left( 2t^{2}+\left\vert
	w\right\vert ^{2}+\left\vert z\right\vert ^{2}\right) $ which happens if and only if
	$t=0$, given that $t\in \mathbb{R}$. Thus, the characteristic polynomial of 
	$X$ becomes $\lambda ^{3}+\left( \left\vert
	z\right\vert ^{2}+\left\vert w\right\vert ^{2}\right) \lambda $ implying 
	that $\left\vert z\right\vert ^{2}+\left\vert w\right\vert ^{2}=1$.
	This shows that $\left( \mathfrak{u}_{H}^{\prime }\right) ^{\bot }\cap 
	\Ad\left( U\right) \left( iH_{0}\right) $ is formed by the 
	matrices satisfying
	$$
	\qquad \left\vert z\right\vert ^{2}+\left\vert w\right\vert ^{2}=1
	$$%
	describing the sphere $S^{3}$ in $\mathbb{C}^{2}$.
	On the other hand, the group $U_{H}$ is 
	isomorphic  to $U\left( 2\right) $ and its action in $\left( \mathfrak{u}%
	_{H}^{\prime }\right) ^{\bot }\cap \Ad\left( U\right) \left(
	iH_{0}\right) $ is the same action of  $U\left( 2\right) $ in $S^{3}$.
	Therefore $U_{H}$ acts transitively in $\left( \mathfrak{u}%
	_{H}^{\prime }\right) ^{\bot }\cap \Ad\left( U\right) \left(
	iH_{0}\right) $ which is the unique isotropic orbit. In fact, 
	Lagrangian since its dimension is $3=\frac{1}{2}\dim _{%
		\mathbb{R}}\mathbb{F}$.\\
	Still in this case,  it is interesting to regard the 
	Lagrangian orbit ($\approx S^{3}$) more intrinsically, in terms of 
	flags of subspaces $\left( V_{1}\subset V_{2}\right) $ with $\dim
	V_{i}=i $. This is done observing that 
	\begin{equation}\label{k}
	k=\frac{\sqrt{2}}{2}\left( 
	\begin{array}{ccc}
	1 & 0 & -1 \\ 
	0 & 1 & 0 \\ 
	1 & 0 & 1%
	\end{array}%
	\right)
	\end{equation}%
	and
	\begin{eqnarray*}
		\Ad\left( k\right) \left( 
		\begin{array}{ccc}
			i & 0 & 0 \\ 
			0 & 0 & 0 \\ 
			0 & 0 & -i%
		\end{array}%
		\right) &=&\frac{1}{2}\left( 
		\begin{array}{ccc}
			1 & 0 & -1 \\ 
			0 & 1 & 0 \\ 
			1 & 0 & 1%
		\end{array}%
		\right) \left( 
		\begin{array}{ccc}
			i & 0 & 0 \\ 
			0 & 0 & 0 \\ 
			0 & 0 & -i%
		\end{array}%
		\right) \left( 
		\begin{array}{ccc}
			1 & 0 & 1 \\ 
			0 & 1 & 0 \\ 
			-1 & 0 & 1%
		\end{array}%
		\right) \\
		&=&\left( 
		\begin{array}{ccc}
			0 & 0 & i \\ 
			0 & 0 & 0 \\ 
			i & 0 & 0%
		\end{array}%
		\right) \in \left( \mathfrak{u}_{H}^{\prime }\right) ^{\bot }\cap \Ad%
		\left( U\right) \left( iH_{0}\right) .
	\end{eqnarray*}
	Since $x_{0}=iH_{0}$ is the origin of the maximal flag, this conjugation
	means that the  Lagrangian orbit of $U_{H}$ is the orbit through $%
	kx_{0}$. In the other hand, looking at $\mathbb{F}\left( 1,2\right) $ as the
	set of  flags $\left( V_{1}\subset V_{2}\right) $ with $\dim V_{i}=i$ 
	the origin is $f_{0}=\left( \langle e_{1}\rangle \subset \langle
	e_{1},e_{2}\rangle \right) $ where $\{e_{1},e_{2},e_{3}\}$ is the  
	canonical basis of  $\mathbb{C}^{3}$. In this representation, the 
	Lagrangian orbit of  $U_{H}$ is the orbit through 
	$$
	kf_{0}=\left( \langle ke_{1}\rangle \subset \langle ke_{1},ke_{2}\rangle
	\right) .
	$$%
	By the expression (\ref{k}) of  $k$ it follows that $ke_{1}=\frac{\sqrt{2}}{2}%
	\left( e_{1}+e_{3}\right) $ and $ke_{2}=e_{2}$ implying that
	$$
	kf_{0}=\left( \langle e_{1}+e_{3}\rangle \subset \langle
	e_{1}+e_{3},e_{2}\rangle \right) .
	$$%
	Here, $U_{H}$ is the embedding of  $\mathrm{U}\left( 2\right) $ in $%
	\mathrm{SU}\left( 3\right) $ given by the matrices%
	$$
	\left( 
	\begin{array}{cc}
	z & 0 \\ 
	0 & g%
	\end{array}%
	\right) \qquad \left\vert z\right\vert =1,~g\in \mathrm{U}\left( 2\right) .
	$$%
	The orbit of this group through $kf_{0}$ is given by 
	$$
	U_{H}kf_{0}=\{\left( \langle e_{1}+ue_{3}\rangle \subset \langle
	e_{1}+ue_{3},ue_{2}\rangle \right) :u\in \mathrm{SU}\left( 2\right) \}
	$$%
	where in this expression $\mathrm{SU}\left( 2\right) $ is seen as the 
	unitary group of the subspace generated by $e_{2},e_{3}$.
\end{example}

\begin{example} Let $Z_H$ be the complexification of  $U_{H}$
	and let  $\mathfrak{z}_{H}$ be the complexification of $\mathfrak{u}_{H}$.
	Then $Z_H$ is the centralizer of $H$ in $G$, and 
	$\mathfrak{z}_{H}$  is the centralizer of $H$ in $\mathfrak{g}$. As observed in \cite{bedori}, if  $U_{H}x$ is 
	a Lagrangian orbit (in a  K\"{a}hler manifold $M$), then the
	orbit $Z_{H}x$ of the complexification is open. The reason is that the
	tangent space $T_{x}U_{H}x$ is Lagrangian and therefore  the subspace $J\left(
	T_{x}U_{H}x\right) $ is the complement of $T_{x}U_{H}x$ in $%
	T_{x}M$ where $J$ is a  complex structure. The tangent space $%
	T_{x}Z_{H}x$ contains $J\left( T_{x}U_{H}x\right) $ since, if  $X\in 
	\mathfrak{u}_{H}$ then $J\left( \widetilde{X}\left( x\right) \right) =%
	\widetilde{iX}\left( x\right) $ and $iX\in \mathfrak{z}_{H}$.
\end{example}
In the previous examples, $U_{H}$ has a  Lagrangian orbit  in the  maximal flag,
and therefore $Z_{H}$ has an open orbit.
\begin{remark} 
	\begin{itemize}
		\item The examples using  $\mathrm{SU}\left( 3\right) $
		above show that in general the projection between 2 flags does not  
		take isotropic orbits to isotropic orbits. In fact, the projection of the Lagrangian 
		orbit in the maximal flag  $\mathbb{F}\left( 1,2\right) $
		to the  projective plane $\mathbb{CP}^{2}$ 
		is not isotropic, give that it does not project on the origin, 
		which is the only isotropic orbit in
		$\mathbb{CP}^{2}$.
		\item If a subgroup $U_{H}$ has a 
		Lagrangian orbit in a  flag $\mathbb{F}_{\Theta }$ 
		then its complexification
		$Z_{H} $ has an open orbit in  $\mathbb{F}_{\Theta }$. The converse is not true,
		since in the previous example $U_{H}$ only has a Lagrangian 
		orbit in the maximal  flag. Therefore $Z_{H}$ has an open orbit in the maximal flag, and so 
		it follows that $Z_{H}$ has an open orbits in every flag.
		\item \noindent{\sc Isotropy representation}: Let $U$ and $L\subset U$ be compact groups. The isotropic orbits of  
		$L$ inside the adjoint orbits of  $U$ are essentially given by 
		the orbits of the isotropy representation of $L$ in $U/L$. For
		example, is $\mathfrak{l}^{\prime }=\mathfrak{l}$ then $\left( 
		\mathfrak{l}^{\prime }\right) ^{\bot }=\mathfrak{l}^{\bot }$, which is 
		identified to the tangent space at the origin of  $U/L$. The adjoint 
		representation of $L$ in $\left( \mathfrak{l}^{\prime }\right) ^{\bot }=%
		\mathfrak{l}^{\bot }$ is the isotropy representation. So that the  
		orbits of  $L$ in $\Ad\left( \mathfrak{l}^{\prime }\right) ^{\bot }=%
		\mathfrak{l}^{\bot }$ which are the isotropic orbits 
		in the adjoint orbits of  $U$ are the same as the orbits
		by the isotropy representation. When $\mathfrak{l}^{\prime }\neq \mathfrak{l}$, the situation does not
		change much. In fact, because of compactness $\mathfrak{l}$ is 
		reductible and  $\mathfrak{l}=\mathfrak{z}_{\mathfrak{l}}\oplus \mathfrak{l}%
		^{\prime }$ where $\mathfrak{z}_{\mathfrak{l}}$ is the centre of  $\mathfrak{l%
		}$. Moreover, $\mathfrak{z}_{\mathfrak{l}}$ is orthogonal to  $%
		\mathfrak{l}^{\prime }$ (with respect to the  Cartan--Killing form of 
		$\mathfrak{u}$). Thus, $\left( \mathfrak{l}^{\prime }\right) ^{\bot }=%
		\mathfrak{z}_{\mathfrak{l}}\oplus \mathfrak{l}^{\bot }$. The adjoint 
		orbits in  $\mathfrak{l}^{\bot }$ are the orbits of the isotropy 
		representation of  $L$ on $U/L$. Now if $X=Z+Y\in \mathfrak{z}_{%
			\mathfrak{l}}\oplus \mathfrak{l}^{\bot }$ then 
		$$
		\Ad\left( L\right) \left( Z+Y\right) =Z+\Ad\left( L\right) Y
		$$%
		which means that the $L$-orbit of $X$ is the  translation by $Y$  of 
		the orbit of  $Y\in \mathfrak{l}^{\bot }$, which is an orbit of the 
		isotropy representation.\\
		Summing up, the orbits of the isotropy representation of  $L$ on $U/L$
		are the same orbits that occur as isotropic orbits in the 
		adjoint orbits of  $U$ (the flags of $G=U_{\mathbb{C}}$).
		\item Suppose that $G$ is a complex semisimple Lie group, not necessarily compact, with $U$ the real compact form of $G$. As the moment map $\mu $ is the identity of $\Ad\left(
		G\right) (H)$, an orbit $Lx$ is isotropic if and only if $x\in
		\left( \mathfrak{l}^{\prime }\right) ^{\bot }$ where the orthogonal is 
		taken with respect to the Cartan--Killing form  $\langle \cdot ,\cdot \rangle $. Some examples of this case are:
		\begin{enumerate}
			\item If $L=U$ then $\left( \mathfrak{u}^{\prime }\right) ^{\bot }=%
			\mathfrak{u}^{\bot }=i\mathfrak{u}=\mathfrak{s}$ (the symmetric 
			part of the  Cartan decomposition $\mathfrak{g}=\mathfrak{u}\oplus i\mathfrak{u}
			$). Thus $U$ has 
			isotropic orbits in  $\Ad\left( G\right) (H)$ if and only if  $%
			H\in \mathfrak{s}$. In such case there exists  a unique isotropic orbit, 
			which is the  flag $\Ad\left( G\right) (H)\cap \mathfrak{s}$.
			\item If $\mathfrak{l}$ is a  Borel subalgebra (minimal
			parabolic) $\mathfrak{p}=\mathfrak{h}\oplus \mathfrak{n}^{+}$ then $%
			\mathfrak{p}^{\prime }=\mathfrak{n}^{+}$ and $\left( \mathfrak{p}^{\prime
			}\right) ^{\bot }$ is the opposite  Borel subalgebra $\mathfrak{p}^{-}=%
			\mathfrak{h}\oplus \mathfrak{n}^{-}$.
		\end{enumerate}
	\end{itemize}
\end{remark}

\section{Products of  flags\label{secdiag}}
Assume here that $U$ is a compact and connected semisimple Lie group with Lie algebra $\mathfrak{u}$. Let $G$ be a complex Lie group with Lie algebra $\mathfrak{g}:=\mathfrak{u}_\mathbb{C}$. The goal here is to describe isotropic or Lagrangian orbits in a product of flags $\mathbb{F}_{\Theta _{1}}\times \mathbb{F}_{\Theta
	_{2}} $ of $G$. A cartesian product $\mathbb{F}_{\Theta
	_{1}}\times \mathbb{F}_{\Theta _{2}}$ of 2 flags of $G$ may be regarded as 
a  flag of $G\times G$ and therefore it may be seen as an adjoint orbit of the 
compact part $U\times U$ as studied in section \ref{secexflagcplx}. The group
$U$ itself (as does  $G$) acts in the product $\mathbb{F}_{\Theta
	_{1}}\times \mathbb{F}_{\Theta _{2}}$ by the   
diagonal action $k\left( x,y\right) =\left( kx,ky\right) $, $%
k\in U$, $\left( x,y\right) \in \mathbb{F}_{\Theta _{1}}\times \mathbb{F}%
_{\Theta _{2}}$,  that is, $U$ is seen as the subgroup $U\times U$
given by the  diagonal $\Delta _{U}=\{\left( u,u\right) :u\in U\}$ and the
diagonal action is the restriction of the action of  $U\times U$ to $%
\Delta _{U}$.\\
Let us now describe the isotropic orbits by the diagonal action. The Lie algebra of  $U\times U$ is $\mathfrak{u}\times \mathfrak{u}$
and the Lie algebra of the  diagonal $\Delta _{U}$ is the  diagonal $\Delta _{%
	\mathfrak{u}}=\{\left( X,X\right) :X\in \mathfrak{u}\}$. Everything is constructed using
Cartesian products:  Cartan subalgebras $\mathfrak{t}\times 
\mathfrak{t}$ and $\mathfrak{h}\times \mathfrak{h}$,  maximal torus $T\times T$,
Weyl chamber $\mathfrak{a}^{+}\times \mathfrak{a}^{+}$ ($\mathfrak{a}=%
\mathfrak{h}_{\mathbb{R}}$) and  the  flag manifolds that are orbits $%
\Ad\left( U\times U\right) \left( iH_{1},iH_{2}\right) $ with  $%
(H_{1},H_{2})\in \mathfrak{a}^{+}\times \mathfrak{a}^{+}$ where
$$\Ad\left( U\times U\right) \left( iH_{1},iH_{2}\right) =\Ad%
\left( U\right) iH_{1}\times \Ad\left( U\right) iH_{2}.$$
An adjoint orbit (product flag) $\Ad\left( U\times U\right)
\left( iH_{1},iH_{2}\right) $ admits an isotropic orbits by the 
diagonal action if and only if  it intercepts the orthogonal $\Delta _{%
	\mathfrak{u}}^{\bot }$ of the derived algebra $\Delta _{\mathfrak{u}%
}^{\prime }$ of $\Delta _{\mathfrak{u}}\approx \mathfrak{u}$ with $\Delta _{%
	\mathfrak{u}}^{\prime }=\Delta _{\mathfrak{u}}$ since $\mathfrak{u}$ is
semisimple. The orthogonal subspace is given by 
$$
\Delta _{\mathfrak{u}}^{\bot }=\{\left( X,-X\right) :X\in \mathfrak{u}\}
$$
since the  Cartan--Killing form of  $\mathfrak{u}\times \mathfrak{u}$ 
is the sum of the forms in each coordinate. Thus, the adjoint 
orbit $\Ad\left( U\times U\right) \left( iH_{1},iH_{2}\right) $
has an isotropic orbit by the diagonal action if and only if there
exist $u_{1},u_{2}\in U$ such that 
\begin{equation}\label{dualcondition}
\Ad\left( u_{1}\right) \left( iH_{1}\right) =-\Ad\left(
u_{2}\right) \left( iH_{2}\right).
\end{equation}
\begin{remark}
	Denote by $U^p:=U\times\cdots\times U$ the product of $U$ $p$-times. If we consider an arbitrary product of flags $\mathbb{F}_{\Theta _{1}}\times\cdots\times \mathbb{F}_{\Theta
		_{p}} $ which can be identified with an orbit $\Ad\left( U^p\right) \left( iH_{1},\cdots, iH_{p}\right)$, then it is easy to show that this 
	has an isotropic orbit by the diagonal action if and only if there
	exist $u_{1},\cdots,u_{p}\in U$ such that $\displaystyle \sum_{j=1}^{p}\Ad(u_j)(iH_j)=0$.
\end{remark}
For the case of two flags, (\ref{dualcondition}) implies that $iH_{2}=\Ad\left( u_{2}^{-1}u_{1}\right) \left(
-iH_{1}\right) $ which means that $iH_{2}$ belongs to the adjoint orbit 
of $-iH_{1}$. This in turn is equivalent to the statement that  the  flags $%
\mathbb{F}_{\Theta _{1}}=\Ad\left( U\right)(iH_{1})$ and $\mathbb{F}%
_{\Theta _{2}}=\Ad\left( U\right) (iH_{2})$ are dual in the sense that
$\Theta _{2}=\sigma \Theta _{1}$ where $\sigma $ is the 
symmetry of the  Dynkin diagram given by  $\sigma =-w_{0}$ and  $w_{0}$ is the
main involution (element of greatest length) of the  Weyl  group
$\mathcal{W}$. In fact, $H_{2}=\iota \left( H_{1}\right) =-w_{0}\left(
H_{1}\right) $ if and only if $-H_{1}$ belongs to the adjoint orbit of $%
H_{2}$. Summing up,
\begin{theorem}\label{noniso}
	A product of flags $\mathbb{F}_{\Theta _{1}}\times \mathbb{F}_{\Theta _{2}}$
	admits an isotropic orbit by the diagonal action if and only if \,
	$\mathbb{F}_{\Theta _{2}}$ is the dual  flag $\mathbb{F}_{\Theta
		_{1}^{\ast }}$ of \, $\mathbb{F}_{\Theta _{1}}$.
\end{theorem}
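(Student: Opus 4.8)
The plan is to apply Proposition~\ref{anill} to the pair $\Delta_U\subset U\times U$ acting on the product flag, viewed (as in Section~\ref{secexflagcplx}) as the adjoint orbit $\Ad(U\times U)(iH_1,iH_2)$ with its KKS form, and then to translate the resulting conjugacy condition into Dynkin-diagram language. First I would record that $\Delta_{\mathfrak u}\cong\mathfrak u$ is semisimple, so $\Delta_{\mathfrak u}'=\Delta_{\mathfrak u}$, and that since the Cartan--Killing form of $\mathfrak u\times\mathfrak u$ is the orthogonal sum of the two factors one has $\Delta_{\mathfrak u}^{\perp}=\{(X,-X):X\in\mathfrak u\}$. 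Because the moment map for the KKS form is the identity, Proposition~\ref{anill} says that the diagonal action admits an isotropic orbit on $\mathbb F_{\Theta_1}\times\mathbb F_{\Theta_2}$ if and only if the product flag meets $\Delta_{\mathfrak u}^{\perp}$, i.e.\ if and only if there exist $u_1,u_2\in U$ with $\Ad(u_1)(iH_1)=-\Ad(u_2)(iH_2)$, equivalently $iH_2\in\Ad(U)(-iH_1)$.

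The heart of the argument is then to identify when $-iH_1$ and $iH_2$ lie in a common adjoint orbit. I would invoke the standard structure theory: each adjoint orbit meets the closed Weyl chamber $\overline{\mathfrak a^{+}}$ in exactly one point; for $H\in\overline{\mathfrak a^{+}}$ the flag type of $\Ad(U)(iH)$ is governed by $\Theta(H)=\{\alpha\ \text{simple}:\alpha(H)=0\}$; and the longest element $w_0$ of the Weyl group sends $\overline{\mathfrak a^{+}}$ onto $-\overline{\mathfrak a^{+}}$, so that $\iota(H):=-w_0H$ is the representative of $\Ad(U)(-H)$ in $\overline{\mathfrak a^{+}}$. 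Picking $H_1\in\overline{\mathfrak a^{+}}$ with $\mathbb F_{\Theta_1}=\Ad(U)(iH_1)$, the orbit of $-iH_1$ has chamber representative $i\iota(H_1)=-iw_0H_1$; hence $iH_2\in\Ad(U)(-iH_1)$ holds if and only if $\mathbb F_{\Theta_2}$ is the flag of $i\iota(H_1)$, whose type is $\sigma\Theta_1$ for the Dynkin-diagram symmetry $\sigma=-w_0$, i.e.\ if and only if $\mathbb F_{\Theta_2}=\mathbb F_{\Theta_1^{*}}$. This yields both implications simultaneously.

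The only delicate point is the bookkeeping in this last step: ``same flag type $\Theta$'' must be shown genuinely equivalent to ``conjugate chamber representative'', which is clean because two elements of $\overline{\mathfrak a^{+}}$ are conjugate if and only if they are equal, while the flag type depends only on the open face on which they lie. I would therefore phrase the equivalence directly at the level of flags, using that $\Ad(U)(-iH)$ has type $\sigma\Theta(H)$ for every $H$ on a given face; this removes any ambiguity in the choice of $H_1,H_2$ and makes the stated ``if and only if'' immediate. Everything else is the chain of equivalences already sketched in the paragraph preceding the theorem.
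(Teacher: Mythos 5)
Your proposal is correct and follows essentially the same route as the paper: identify the product flag with the adjoint orbit of $U\times U$, apply Proposition \ref{anill} to the diagonal subalgebra (noting $\Delta_{\mathfrak u}'=\Delta_{\mathfrak u}$ and $\Delta_{\mathfrak u}^{\perp}=\{(X,-X)\}$), and reduce to the condition $iH_2\in\Ad(U)(-iH_1)$, which is then translated via $\sigma=-w_0$ into duality of flag types. Your extra care in the final bookkeeping step (unique chamber representatives and the face-dependence of $\Theta$) is a welcome tightening of what the paper only sketches.
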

Assuming that the  flags are dual, that is, $-iH_{1}\in \Ad%
\left( U\right) \left( iH_{2}\right) $, then the isotropic orbits by the 
diagonal action on $\Ad\left( U\times U\right) \left(
iH_{1},iH_{2}\right) $ are those that pass through elements of the type $%
\left( X,-X\right) $ inside the adjoint orbit. Given an element $\left( X,-X\right) \in \Ad\left( U\right) (iH_{1})\times \Ad%
\left( U\right) (iH_{2})$ set $X=\Ad\left( u\right) \left( iH_{1}\right) $ with $u\in U$. Then,
$-X=\Ad\left( u\right) \left( -iH_{1}\right) $, that is, 
$$
\left( X,-X\right) =\left( \Ad\left( u\right) \left( iH_{1}\right) ,%
\Ad\left( u\right) \left( -iH_{1}\right) \right)
$$%
which means that  $\left( X,-X\right) $ belongs to the  diagonal orbit of
$\left( iH_{1},-iH_{1}\right) $ and reciprocally, the elements of the 
diagonal orbit of  $\left( iH_{1},-iH_{1}\right) $ have the form $\left(
X,-X\right) $. In this case, there exists a unique isotropic orbit 
by the  diagonal action. This isotropic orbit has the following geometric interpretation:
the map $-\mathrm{id}$ of  $\mathfrak{u}$ takes the 
orbit $\Ad\left( U\right) \left( iH\right) $ to the orbit $\mathrm{%
	Ad}\left( U\right) \left( -iH\right) =\Ad\left( U\right) \left(
i\sigma \left( H\right) \right) $ defining a diffeomorphism between the  flag $%
\mathbb{F}_{\Theta }$ and its dual flag $\mathbb{F}_{\Theta ^{\ast }}$. Since the 
isotropic orbit of the diagonal action is given by 
$$
\{\left( X,-X\right) \in \Ad\left( U\right) \left( iH\right) \times 
\Ad\left( U\right) \left( -iH\right) :X\in \Ad\left(
U\right) \left( iH\right) \}
,$$%
we conclude that this isotropic orbit is the graph of the 
diffeomorphism defined by the antipodal map  $-\mathrm{id}$. Such graph
has dimension  $\dim \mathbb{F}_{\Theta }=\dim \mathbb{%
	F}_{\Theta ^{\ast }}$. Therefore, the isotropic orbit is in fact
Lagrangian.\\ 
In conclusion, we have obtained the following description of 
isotropic orbits.

\begin{proposition}\label{diagonalL}
	There exists a unique isotropic orbit of the diagonal action of $U$ on 
	$\mathbb{F}_{\Theta }\times \mathbb{F}_{\Theta ^{\ast }}$. Such orbit is Lagrangian
	and is given as the graph of  $-\mathrm{id}:%
	\Ad\left( U\right) \left( iH\right) \rightarrow \Ad\left(
	U\right) \left( i\sigma \left( H\right) \right)$.
\end{proposition}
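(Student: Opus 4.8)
The plan is to deduce everything from Proposition~\ref{anill} together with the explicit computations made just above the statement. Regard $\mathbb{F}_{\Theta }\times \mathbb{F}_{\Theta ^{\ast }}$ as the adjoint orbit $\Ad(U\times U)(iH,i\sigma(H))$ of the compact group $U\times U$, equipped with its KKS form, so that the moment map is the identity, and apply Proposition~\ref{anill} to the pair $\Delta_U\subset U\times U$ (in the orthogonal/semisimple form recorded in the Remark following that proposition). Since $\mathfrak{u}$ is semisimple, the derived algebra $\Delta_{\mathfrak{u}}'$ equals $\Delta_{\mathfrak{u}}$, and since the Cartan--Killing form of $\mathfrak{u}\times\mathfrak{u}$ is the sum of the two factors, the annihilator $(\Delta_{\mathfrak{u}}')^{\circ}$ is $\Delta_{\mathfrak{u}}^{\bot}=\{(X,-X):X\in\mathfrak{u}\}$. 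Thus a $\Delta_U$-orbit in the product flag is isotropic if and only if it meets $(\mathbb{F}_{\Theta }\times \mathbb{F}_{\Theta ^{\ast }})\cap\Delta_{\mathfrak{u}}^{\bot}$; and since $\Delta_{\mathfrak{u}}^{\bot}$ is invariant under $\Ad(\Delta_U)$, this intersection is a union of isotropic orbits.

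Next I would establish existence and uniqueness by identifying that intersection. Existence: because $\mathbb{F}_{\Theta ^{\ast }}$ is the dual flag, $-iH\in\Ad(U)(i\sigma(H))$, so $(iH,-iH)$ lies in $\mathbb{F}_{\Theta }\times \mathbb{F}_{\Theta ^{\ast }}$ and trivially in $\Delta_{\mathfrak{u}}^{\bot}$. Uniqueness: any point of the intersection has the form $(X,-X)$ with $X\in\Ad(U)(iH)$, say $X=\Ad(u)(iH)$; then $-X=\Ad(u)(-iH)$, hence $(X,-X)=\Ad(u,u)(iH,-iH)$ lies in the $\Delta_U$-orbit of $(iH,-iH)$, and conversely every point of that orbit is of this form. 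Therefore $(\mathbb{F}_{\Theta }\times \mathbb{F}_{\Theta ^{\ast }})\cap\Delta_{\mathfrak{u}}^{\bot}$ is precisely the single $\Delta_U$-orbit through $(iH,-iH)$, which is then the unique isotropic orbit.

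Finally, the graph description and the Lagrangian property come together. The map $-\mathrm{id}$ of $\mathfrak{u}$ restricts to a diffeomorphism $\Ad(U)(iH)\to\Ad(U)(-iH)=\Ad(U)(i\sigma(H))$, i.e. $\mathbb{F}_{\Theta }\cong\mathbb{F}_{\Theta ^{\ast }}$, and the isotropic orbit $\{(X,-X):X\in\Ad(U)(iH)\}$ is by construction its graph. Since the graph of a diffeomorphism $f\colon A\to B$ has dimension $\dim A=\dim B=\tfrac12\dim(A\times B)$, this isotropic submanifold is half-dimensional, hence Lagrangian.

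I do not anticipate a genuine obstacle, since the substantive content is already in the paragraph preceding the statement; the only point needing a little care is the uniqueness claim, where one must argue not merely that each isotropic orbit passes through a point $(X,-X)$, but that the entire locus $\mu^{-1}(\Delta_{\mathfrak{u}}^{\bot})$ inside the product flag is a single $\Delta_U$-orbit — which is exactly what the computation in the second step provides.
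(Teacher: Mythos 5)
Your proposal is correct and follows essentially the same route as the paper: the paper's own argument (given in the paragraphs preceding the proposition) likewise applies Proposition~\ref{anill} to $\Delta_U\subset U\times U$ with the identity moment map, computes $\Delta_{\mathfrak{u}}^{\bot}=\{(X,-X):X\in\mathfrak{u}\}$, shows every point $(X,-X)$ with $X=\Ad(u)(iH)$ lies in the single diagonal orbit of $(iH,-iH)$ and conversely, identifies that orbit with the graph of $-\mathrm{id}$, and concludes it is Lagrangian by the same dimension count. Your explicit emphasis on the uniqueness step (that the whole locus $\mu^{-1}(\Delta_{\mathfrak{u}}^{\bot})$ is one orbit) is exactly the point the paper makes with its ``reciprocally'' sentence.
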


\subsection{Shifted diagonals as Lagrangians}
Variations of the 
diagonal action may be obtained by the action  on a product  of flags 
by subgroups of the type
$$
\Delta ^{m}=\left\{\left( u,mum^{-1}\right) \in U\times U:u\in U\right\}
$$
for any given $m\in U$. The Lie algebra of $\Delta ^{m}$ is 
$$
\Delta _{\mathfrak{u}}^{m}=\{\left( X,\Ad\left( m\right) X\right)
\in \mathfrak{u}\times \mathfrak{u}:X\in \mathfrak{u}\}.
$$
This is isomorphic to  $\mathfrak{u}$ and its orthogonal complement is given by
$$
\left( \Delta _{\mathfrak{u}}^{m}\right) ^{\bot }=\{\left( X,-\Ad%
\left( m\right) X\right) \in \mathfrak{u}\times \mathfrak{u}:X\in \mathfrak{u%
}\}.
$$%
Thus, the diagonal action by the subgroup 
$\Delta ^{m}$ has an isotropic orbit in the flag $\Ad%
\left( U\times U\right) \left( iH_{1},iH_{2}\right) $ if and only if such orbit
contains the elements of the form $\left( X,-\Ad\left(
m\right) X\right) $. This happens if and only if there exist elements $u_{1},u_{2}\in U$
such that $\Ad\left( u_{1}\right) (iH_{1})=-\Ad\left(m^{-1}u_2\right) (iH_{2})$, that is, $-iH_{1}=\Ad\left( v\right)
\left( iH_{2}\right) $ where $v=u_{1}^{-1}m^{-1}u_2$. Therefore, similarly 
to what happen for the  diagonal action, such isotropic orbits only exist in the products
$\mathbb{F}_{\Theta }\times \mathbb{F}_{\Theta ^{\ast }}$
of dual flags, namely when $-iH_{1}$ belongs to the adjoint orbit of  $%
iH_{2}$.  Now, it is simple to see that the elements of the orbit of $\left( iH_{1},-\Ad\left( m\right)
(iH_{1})\right) $ by the diagonal action $\Delta ^{m}$ have the form $\left( X,-\Ad\left( m\right) (X)\right)$. Reciprocally, given an element $\left( X,-\Ad\left( m\right) (X)\right)$ in $\Ad(U)(iH_1)\times\Ad(U)(iH_2)$, if $X=\Ad(u)(iH_1)$ with $u\in U$ then
\begin{eqnarray*}
	\left( X,-\Ad\left( m\right) (X)\right) &=& \left( \Ad\left( u\right) \left( iH_{1}\right) ,-\Ad%
	\left( m\right) \Ad\left( u\right) \left( iH_{1}\right) \right)\\
	&=& \left( \Ad\left(
	u\right) \left( iH_{1}\right) ,\Ad\left( mum^{-1}\right) \left(-\Ad%
	\left( m\right) \left( iH_{1}\right) \right)\right).
\end{eqnarray*}%
This means that $\left( X,-\Ad\left( m\right) (X)\right)$ belongs to the orbit of $\left( iH_{1},-\Ad\left( m\right)
(iH_{1})\right)$ by the diagonal action of $\Delta ^{m}$ . Therefore, here there also exists a unique isotropic orbit by the diagonal action of the subgroup $\Delta ^{m}$ and this is given by the graph of $-\Ad\left( m\right) $. 
\begin{remark}
	A direct computation allows us to show that an arbitrary product of flags $\mathbb{F}_{\Theta _{1}}\times\cdots\times \mathbb{F}_{\Theta
		_{p+1}} =\Ad\left( U^{p+1}\right) \left( iH_{1},\cdots, iH_{p+1}\right)$ has an isotropic orbit by the diagonal action of the subgroup
	$$\{ (u,m_1um_1^{-1},\cdots, m_pum_p^{-1})\in U^{p+1}:\ u\in U\},$$
	if and only if there exist $u_{1},\cdots,u_{p+1}\in U$ such that 
	$$\displaystyle \Ad(u_1)(iH_1)+\sum_{j=1}^{p}\Ad(m_j^{-1}u_{j+1})(iH_{j+1})=0.$$
\end{remark}
Summing up,

\begin{proposition}\label{shifted11}
	Inside the product $\mathbb{F}_{\Theta }\times \mathbb{F}_{\Theta ^{\ast }}$,
	for each $m\in U$,
	there exists a unique isotropic orbit of the diagonal action by the subgroup
	$\Delta ^{m}=\{\left(
	u,mum^{-1}\right) :u\in U\}$. Such an orbit is 
	Lagrangian and it is given by the graph of the map $-\Ad\left( m\right) :%
	\Ad\left( U\right) \left( iH\right) \rightarrow \Ad\left(
	U\right) \left( i\sigma \left( H\right) \right)$.
\end{proposition}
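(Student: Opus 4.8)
The plan is to deduce Proposition \ref{shifted11} from Proposition \ref{anill} by running the argument of Proposition \ref{diagonalL} verbatim, the only change being that the diagonal $\Delta_{\mathfrak u}$ is replaced by the twisted diagonal $\Delta_{\mathfrak u}^m=\{(X,\Ad(m)X):X\in\mathfrak u\}$. First I would invoke the orthogonal reformulation of Proposition \ref{anill} (third bullet of the remark following it): the product flag $\mathbb F_{\Theta_1}\times\mathbb F_{\Theta_2}$ sits inside $\mathfrak u\times\mathfrak u$ as the adjoint orbit $\Ad(U\times U)(iH_1,iH_2)$, the KKS moment map is the identity, and the relevant invariant form is the sum of the two Cartan--Killing forms, so a $\Delta^m$-orbit is isotropic precisely when it lies in the orthogonal complement of the derived algebra of $\Delta_{\mathfrak u}^m$. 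Since $\Delta_{\mathfrak u}^m\cong\mathfrak u$ is semisimple, it is perfect, hence equal to its own derived algebra, and a one-line computation with the block form of the Cartan--Killing form of $\mathfrak u\times\mathfrak u$ gives $(\Delta_{\mathfrak u}^m)^{\bot}=\{(X,-\Ad(m)X):X\in\mathfrak u\}$.

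Next I would settle existence. An isotropic $\Delta^m$-orbit in $\Ad(U\times U)(iH_1,iH_2)$ must contain a point $(X,-\Ad(m)X)$ with $X=\Ad(u_1)(iH_1)$ and $-\Ad(m)X=\Ad(u_2)(iH_2)$; eliminating $X$ yields $-iH_1\in\Ad(U)(iH_2)$, which by the combinatorial identification already recorded before Theorem \ref{noniso} (namely $-H_1\in\Ad(U)(H_2)$ iff $\Theta_2=\sigma\Theta_1$) is exactly the condition $\mathbb F_{\Theta_2}=\mathbb F_{\Theta_1^{\ast}}$. So from here on I assume the flags are dual, write $H_1=H$ and $\Ad(U)(iH_2)=\Ad(U)(-iH)=\Ad(U)(i\sigma(H))$.

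The core step is to identify the isotropic locus with a single orbit. I would compute the $\Delta^m$-orbit through the basepoint $\bigl(iH,-\Ad(m)(iH)\bigr)$: acting by $(u,mum^{-1})$ and using the identity $\Ad(mum^{-1})\bigl(-\Ad(m)(iH)\bigr)=-\Ad(mu)(iH)=-\Ad(m)\Ad(u)(iH)$ shows this orbit equals $\{(X,-\Ad(m)X):X\in\Ad(U)(iH)\}$, by transitivity of $\Ad(U)$ on $\Ad(U)(iH)$. Conversely, since $mU=U$ we get $-\Ad(m)X=\Ad(m)(-X)\in\Ad(m)\Ad(U)(-iH)=\Ad(U)(-iH)$ automatically whenever $X\in\Ad(U)(iH)$, so $(\Delta_{\mathfrak u}^m)^{\bot}\cap\bigl(\mathbb F_{\Theta}\times\mathbb F_{\Theta^{\ast}}\bigr)$ is precisely this same set. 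Hence every isotropic orbit is contained in it, and it is itself one orbit: this gives at once uniqueness and the description as the graph of $-\Ad(m)\colon\Ad(U)(iH)\to\Ad(U)(i\sigma(H))$. Being a graph over $\mathbb F_{\Theta}$, the orbit has dimension $\dim\mathbb F_{\Theta}=\dim\mathbb F_{\Theta^{\ast}}=\tfrac12\dim\bigl(\mathbb F_{\Theta}\times\mathbb F_{\Theta^{\ast}}\bigr)$, so it is Lagrangian.

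I do not anticipate a genuine obstacle: once Proposition \ref{anill} and the duality bookkeeping of Section \ref{secdiag} are available, the proof is an assembly of short computations. The one point that needs care is the ``conversely'' part of the core step — verifying that $(\Delta_{\mathfrak u}^m)^{\bot}$ does not meet the product flag outside the graph — and there the harmless observation $mU=U$, which forces the second component back into $\Ad(U)(-iH)$, is exactly what makes the isotropic locus a single orbit rather than something strictly larger.
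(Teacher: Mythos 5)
Your proposal is correct and follows essentially the same route as the paper: reduce to Proposition \ref{anill} via the identity moment map, compute $(\Delta_{\mathfrak u}^m)^{\bot}=\{(X,-\Ad(m)X)\}$, derive the duality condition $-iH_1\in\Ad(U)(iH_2)$, and identify the isotropic locus with the single $\Delta^m$-orbit through $\bigl(iH,-\Ad(m)(iH)\bigr)$, i.e.\ the graph of $-\Ad(m)$. The only (welcome) refinement is that you make explicit the observation that the second-component membership in $\Ad(U)(-iH)$ is automatic, a point the paper leaves implicit.
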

Next we prove that each pair of Lagrangian orbits in $\mathbb{F}_{\Theta }\times \mathbb{F}_{\Theta ^{\ast }}$ of the previous proposition are Hamiltonian isotopic.	
\begin{lemma} Let $m_1,m_2\in U$ with $m_1\neq m_2$. Denote by
	$$\mathcal{L}_1=\Gamma\left\lbrace-\Ad\left( m_1\right) :%
	\Ad\left( U\right) \left( iH\right) \rightarrow \Ad\left(
	U\right) \left( i\sigma \left( H\right) \right)\right\rbrace\quad\textnormal{and}$$
	$$\mathcal{L}_2=\Gamma\left\lbrace-\Ad\left( m_2\right) :%
	\Ad\left( U\right) \left( iH\right) \rightarrow \Ad\left(
	U\right) \left( i\sigma \left( H\right) \right)\right\rbrace,$$
	the Lagrangian orbits in $M=\mathbb{F}_{\Theta }\times \mathbb{F}_{\Theta ^{\ast }}$ by the diagonal action of the subgroups $\Delta^{m_1}$ and $\Delta^{m_2}$, respectively. Then $\mathcal{L}_1$ is Hamiltonian isotopic to $\mathcal{L}_2$.
\end{lemma}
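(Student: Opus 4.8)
The plan is to realize every Lagrangian $\mathcal{L}_{m}$ of Proposition~\ref{shifted11} as the image of the single ``diagonal'' Lagrangian $\mathcal{L}_{e}$ (the graph of $-\mathrm{id}$ from Proposition~\ref{diagonalL}) under an element of $U\times U$ acting on $M=\mathbb{F}_{\Theta}\times\mathbb{F}_{\Theta^{\ast}}$, and then to connect the relevant group element to the identity through a Hamiltonian isotopy of $M$.

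First I would record the elementary observation that, writing the action of $U\times U$ on $M=\Ad(U)(iH)\times\Ad(U)(i\sigma(H))$ coordinatewise by the adjoint action, one has for every $n\in U$
\begin{equation*}
(e,n)\cdot\mathcal{L}_{e}=\{(X,-\Ad(n)X):X\in\Ad(U)(iH)\}=\mathcal{L}_{n},
\end{equation*}
since by Proposition~\ref{shifted11} the set $\mathcal{L}_{n}$ is precisely the graph of $-\Ad(n)$. In particular $\mathcal{L}_{1}=(e,m_{1})\cdot\mathcal{L}_{e}$ and $\mathcal{L}_{2}=(e,m_{2})\cdot\mathcal{L}_{e}$, so that, the action being a left action,
\begin{equation*}
\mathcal{L}_{2}=(e,m_{2}m_{1}^{-1})\cdot\mathcal{L}_{1}.
\end{equation*}
Thus it suffices to show that the symplectomorphism $\phi$ of $M$ induced by the element $(e,n)\in U\times U$, with $n:=m_{2}m_{1}^{-1}$, lies on a Hamiltonian isotopy starting at $\mathrm{id}_{M}$; pushing $\mathcal{L}_{1}$ along such an isotopy then yields a Hamiltonian isotopy onto $\phi(\mathcal{L}_{1})=\mathcal{L}_{2}$.

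To produce that isotopy I would use that $U$ is compact and connected, hence $\exp:\mathfrak{u}\to U$ is surjective; choose $Z\in\mathfrak{u}$ with $\exp(Z)=n$ and consider the one-parameter subgroup $t\mapsto(e,\exp(tZ))=\exp\bigl(t(0,Z)\bigr)$ of $U\times U$. Its action on $M$ is the flow of the fundamental vector field $\widetilde{(0,Z)}$, which by \eqref{moment1} is the Hamiltonian vector field of $\hat{\mu}\bigl((0,Z)\bigr)$, since the $U\times U$-action on the adjoint orbit $M$ is Hamiltonian with the $\Ad^{\ast}$-equivariant moment map described in Section~\ref{secexflagcplx}. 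Consequently $\phi_{t}$, the action of $(e,\exp(tZ))$ for $t\in[0,1]$, is a Hamiltonian isotopy from $\mathrm{id}_{M}$ to $\phi$, and $\{\phi_{t}(\mathcal{L}_{1})\}_{t\in[0,1]}$ is the desired Hamiltonian isotopy from $\mathcal{L}_{1}$ to $\mathcal{L}_{2}$.

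The argument is short, and the only point needing care — the ``main obstacle'' such as it is — is the upgrade from \emph{symplectic} to \emph{Hamiltonian}: one must know that every fundamental vector field of the $U\times U$-action on the product flag is globally Hamiltonian, which holds here precisely because $M$ carries its KKS form with the identity as an $\Ad^{\ast}$-equivariant moment map (equivalently, because $\mathfrak{u}\times\mathfrak{u}$ is semisimple). If one prefers to avoid surjectivity of $\exp$, the same conclusion follows by taking any smooth path $m(t)$ in $U$ from $e$ to $n$ and noting that the time-dependent generator of $t\mapsto(e,m(t))$ is the fundamental vector field of $\dot m(t)m(t)^{-1}\in\mathfrak{u}$, again Hamiltonian; connectedness of $U$ is all that is genuinely used.
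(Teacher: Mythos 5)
Your proposal is correct and follows essentially the same route as the paper: both choose $Z\in\mathfrak{u}$ with $\exp(Z)=m_{2}m_{1}^{-1}$ via surjectivity of the exponential on the compact connected group $U$, flow along the one-parameter family $(e,\exp(tZ))$ acting on the second factor, and observe that this flow is generated by the fundamental vector field $(\widetilde{0},\widetilde{Z})$, which is Hamiltonian for the KKS form with Hamiltonian $\hat{\mu}\bigl((0,Z)\bigr)$. The only cosmetic difference is that you phrase the deformation as the group element $(e,m_{2}m_{1}^{-1})$ carrying $\mathcal{L}_{1}$ to $\mathcal{L}_{2}$, whereas the paper writes the isotopy out explicitly in coordinates; the content is identical.
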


\begin{proof}
	Since $U$ is compact and connected, the exponential map $e:\mathfrak{u}\to U$ is surjective (see \cite[p. 243]{smgrplie}). Therefore, for $m_2 m_1^{-1}$, there exists $X\in \mathfrak{u}$ such that $e^X=m_2 m_1^{-1}$. We define $\varphi:[0,1]\times M\to M$ as
	$$ \varphi(t,(\Ad\left( u_1\right) \left( iH\right) ,\Ad%
	\left( u_2\right) \left(-iH\right))=(\Ad\left( u_1\right) \left( iH\right) ,\Ad%
	\left(e^{tX} u_2\right) \left(-iH\right)),$$
	for all $t\in [0,1]$. As the KKS symplectic form is $\Ad$-invariant and we consider in $M$ the product symplectic form (KKS symplectic form in each coordinate $\omega:=\mathrm{p}_1^*\omega_1+\mathrm{p}_2^*\omega_2$) we have that $\varphi_t$ is a symplectomorphism for all $t\in [0,1]$. Moreover 
	$$\varphi(0,(\Ad\left( u_1\right) \left( iH\right) ,\Ad%
	\left( u_2\right) \left(-iH\right))=(\Ad\left( u_1\right) \left( iH\right) ,\Ad%
	\left( u_2\right) \left(-iH\right)),$$
	that is, $\varphi_0=\mathrm{id}_M$, and for $\left( \Ad\left( u\right) \left( iH\right) ,-\Ad%
	\left( m_1u\right) \left( iH\right) \right)\in \mathcal{L}_1$ we get
	\begin{eqnarray*}
		\varphi(1,	
		\left( \Ad\left( u\right) \left( iH\right) ,\Ad%
		\left( m_1u\right) \left(-iH\right) \right)) & = & \left( \Ad\left( u\right) \left( iH\right) ,\Ad%
		\left(e^X m_1u\right) \left(-iH\right) \right)\\
		& = & \left( \Ad\left( u\right) \left( iH\right) ,-\Ad%
		\left(m_2 u\right) \left(iH\right) \right)\in \mathcal{L}_2.
	\end{eqnarray*}
	That is, $\varphi(1,\mathcal{L}_1)=\mathcal{L}_2$. Thus, $\varphi$ is a symplectic isotopy which deforms the Lagrangian orbit $\mathcal{L}_1$ to the Lagrangian orbit $\mathcal{L}_2$.\\
	
	Now, let us see that $\varphi$ is actually a Hamiltonian isotopy. Recall that each of our flag manifolds here is endow with the KKS symplectic form and for each $X\in \mathfrak{u}$ the Hamitonian vector field  $%
	\widetilde{X}=\ad\left( X\right) $ has 
	Hamiltonian function  $\hat{\mu}(X)\left( x\right) =\langle x,X\rangle $.  As $\Ad(e^{tX})=e^{t\ad(X)}$, we have that
	$$\frac{d}{dt}\varphi_t(\Ad\left( u_1\right) \left( iH\right) ,\Ad%
	\left( u_2\right) \left(-iH\right)) =  \frac{d}{dt}(\Ad\left( u_1\right) \left( iH\right) ,\Ad%
	\left(e^{tX} u_2\right) \left(-iH\right))$$
	$$=\left(0,\frac{d}{dt}e^{t\ad(X)} \left(\Ad (u_2) \left(-iH\right)\right)\right) =  (0, \ad(X)\circ e^{t\ad(X)}(\Ad (u_2) (-iH)))$$
	$$=(\widetilde{0},\widetilde{X}) \left(\Ad\left( u_1\right) \left( iH\right) ,\Ad%
	\left( e^{tX}u_2\right) \left(-iH\right)\right)$$
	$$= (\widetilde{0},\widetilde{X})(\varphi_t(\Ad\left( u_1\right) \left( iH\right) ,\Ad%
	\left( u_2\right) \left(-iH\right))).$$
	
	That is, $\frac{d}{dt}\varphi_t= (\widetilde{0},\widetilde{X})\circ\varphi_t$ for all $t\in [0,1]$. Hence, as $(\widetilde{0},\widetilde{X})$ is a Hamiltonian vector field on $M$ it follows that $\varphi$ is a Hamiltonian isotopy. Thus, $\mathcal{L}_1$ and $\mathcal{L}_2$ are Hamiltonian isotopic. 
\end{proof}
\begin{theorem}\label{HamiltonianIsotopic}
	All Lagrangian orbits in $\mathbb{F}_{\Theta }\times \mathbb{F}_{\Theta ^{\ast }}$ of Proposition 4 belong the same Hamiltonian isotopy class.
\end{theorem}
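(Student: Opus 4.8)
The plan is to read off the Theorem as an immediate consequence of the preceding Lemma. That Lemma shows that whenever $m_1\neq m_2$ in $U$ the Lagrangian orbits $\mathcal{L}_{m_1}$ and $\mathcal{L}_{m_2}$ of Proposition \ref{shifted11} are Hamiltonian isotopic inside $M=\mathbb{F}_\Theta\times\mathbb{F}_{\Theta^\ast}$; the case $m_1=m_2$ is vacuous since the two orbits coincide. First I would recall that ``being Hamiltonian isotopic'' is an equivalence relation on the set of Lagrangian submanifolds of $(M,\omega)$: reflexivity is witnessed by the constant isotopy, symmetry by running a Hamiltonian isotopy backwards (the time-reversed family is generated by $-H_{1-t}$), and transitivity by concatenating two isotopies after a smooth reparametrisation of $[0,1]$ so that the joined family stays smooth and its generator is the concatenation of the corresponding time-dependent Hamiltonians. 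Granting this, the Lemma says that all members of the family $\{\mathcal{L}_m:m\in U\}$ are mutually equivalent, hence lie in a single Hamiltonian isotopy class, which is the assertion. In fact transitivity is not strictly needed, since the Lemma already exhibits, for each ordered pair, an explicit isotopy between the two orbits.

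For self-containedness I would also restate the mechanism behind the Lemma, as it is the only nontrivial ingredient. Given $m_1,m_2\in U$, surjectivity of $\exp\colon\mathfrak{u}\to U$ (valid because $U$ is compact and connected) lets us write $m_2m_1^{-1}=e^X$ for some $X\in\mathfrak{u}$; then $\varphi_t$ is the diffeomorphism of $M$ acting as the identity on the first flag and as $\Ad(e^{tX})$ on the second. Each $\varphi_t$ is a symplectomorphism for the product KKS form $\mathrm{p}_1^\ast\omega_1+\mathrm{p}_2^\ast\omega_2$ because $U$ acts on each flag by symplectomorphisms of the KKS form, one has $\varphi_0=\mathrm{id}_M$, and the short computation in the Lemma gives $\varphi_1(\mathcal{L}_{m_1})=\mathcal{L}_{m_2}$.

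The point that needs care — and which is genuinely the content of the Lemma rather than of the Theorem — is that this symplectic isotopy is \emph{Hamiltonian}, i.e. that the generating vector field $\tfrac{d}{dt}\varphi_t\circ\varphi_t^{-1}=(\widetilde 0,\widetilde X)$ is globally and not merely locally Hamiltonian. This is exactly where the explicit description of the KKS moment map from Section \ref{secexflagcplx} enters: on each adjoint orbit $\widetilde X=\ad(X)$ is the Hamiltonian vector field of $x\mapsto\langle x,X\rangle$, so on the product with the sum symplectic form the function $(x,y)\mapsto\langle y,X\rangle$ has Hamiltonian vector field $(\widetilde 0,\widetilde X)$. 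Thus $\varphi$ is a genuine Hamiltonian isotopy carrying $\mathcal{L}_{m_1}$ to $\mathcal{L}_{m_2}$; letting $m_1,m_2$ range over $U$ and invoking the equivalence-relation remark completes the proof. I expect no real obstacle beyond this verification, which the Lemma has already dispatched.
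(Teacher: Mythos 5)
Your proposal is correct and matches the paper, which states the theorem as an immediate consequence of the preceding Lemma and offers no further argument: the Lemma already provides, for every pair $m_1\neq m_2$, an explicit Hamiltonian isotopy carrying one orbit to the other. Your additional remarks on the equivalence-relation structure and on why the generating vector field $(\widetilde{0},\widetilde{X})$ is globally Hamiltonian faithfully reproduce the content of the paper's Lemma rather than introducing a different route.
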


\section{Infinitesimally tight immersions}\label{imersions}
Y. G. Oh in	\cite{Oh1} studied tight Lagrangian submanifolds of $\mathbb C \mathbb P^n$ and posed the question of classifying  all possible tight Lagrangian submanifolds in  Hermitian symmetric spaces. In particular, he asked
whether  the real forms are the only possible tight Lagrangian submanifolds. Later, C. Gorodski and F. Podest\`a classified those compact tight Lagrangian submanifolds which have the $\mathbb{Z}_2$-homology of a sphere in the case of irreducible compact homogeneous K\"ahler manifolds  \cite{GP}. The concept of tightness 
has applications to the problem of Hamiltonian volume minimization. For instance, Kleiner and Oh showed
that the standard $\mathbb R \mathbb P^n$ inside $\mathbb C\mathbb P^n$ is tight and 
has the least volume among its 
Hamiltonian deformations. Some features of intersection theory of Lagrangian submanifolds including some Lagrangian orbits in complex hyperquadrics can be found in \cite{bedu,IMMO}.\\ 
Here we explore the concept of \emph{infinitesimally tight} which we show to  be equivalent to the notion of locally tight. We give examples of infinitesimally tight Lagrangians, and we prove that the Lagrangians orbits by the diagonal and shifted diagonal actions in the product of two flags, found in the previous sections, are infinitesimally tight. Let $G$ be a Lie group and  $M$ a homogeneous space together with 
a $G$-invariant symplectic form $\omega $, that is, the action 
of $G$ on $\left( M,\omega \right) $ is symplectic.

\begin{definition}\label{tight}\cite{Oh1}
	A Lagrangian submanifold $\mathcal{L}$ in $M$ is called {\it globally tight}
	(respectively {\it locally tight}) if for all  $g\in G$
	(respectively $g$ near the identity) such that $\mathcal{L}$ intersects $%
	g\left( \mathcal{L}\right) $ transversally, we have
	$$
	\#\left( \mathcal{L}\cap g\left( \mathcal{L}\right) \right) =\mathrm{SB}\left( \mathcal{L},\mathbb{Z}%
	_{2}\right)
	$$%
	where $\#\left( \cdot \right) $ is the number of intersection points, and $\mathrm{SB}%
	\left( \mathcal{L},\mathbb{Z}_{2}\right) $ is the sum of the  $\mathbb{Z}_{2}$ Betti numbers, that is, 
	the sums of the dimensions of the homologies of $\mathcal{L}$ with
	$\mathbb{Z}_{2}$ coefficients.
\end{definition}

\begin{remark}
	In \cite{irsak} a
	Lagrangian submanifold $\mathcal{L}$ of a  K\"{a}hler manifold $M$ is called {\it globally tight} 
	(or {\it locally tight}) if the conditions of Definition \ref{tight}
	are satisfied for isometries of $M$. The definition of  
	\cite{irsak} is directed to Hermitian symmetric spaces, this is why 
	it refers to isometries of  $M$. Definition \ref{tight} 
	adapts the concept of   \cite{irsak} and considers more general 
	symmetric homogeneous  spaces.
\end{remark}
\begin{remark} The equality appearing in  Definition \ref{tight} is the lower bound of the inequality of the Arnold--Givental conjecture,
	namely $\#(\mathcal{L} \cap g(\mathcal{L})) \geq \mathrm{SB}(\mathcal{L},\mathbb Z_2)$. The conjecture has been proven in many cases, see for instance \cite{Oh2} and the survey \cite{MO}.
\end{remark}
Denote by $\widetilde{X}$ the fundamental vector field associated to an element $X\in\mathfrak{g}$.
\begin{definition}
	\label{definfintight}
	Let $\mathcal{L}$ be a submanifold of $M$. 
	An element  $X\in 
	\mathfrak{g}$ is called  {\it transversal} to $\mathcal{L}$ if
	it satisfies the following 2 conditions
	\begin{enumerate}
		\item  for any $x\in \mathcal{L}$, if  $\widetilde{X}\left( x\right) \in
		T_{x}\mathcal{L}$, then $\widetilde{X}\left( x\right) =0$, and  
		\item  the set
		$$
		f_{\mathcal{L}}\left( X\right) =\{x\in \mathcal{L}:0=\widetilde{X}\left( x\right) \in T_{x}\mathcal{L}\}
		$$%
		is finite. 
	\end{enumerate}
	In other words, $\widetilde{X}$ is only  tangent to $\mathcal{L}$ at most at finitely many points
	where it vanishes. A Lagrangian submanifold  $\mathcal{L}$ in a homogeneous space $M$ is called 
	{\it  infinitesimally tight} if the equality 
	$$
	\#\left( f_{\mathcal{L}}\left( X\right) \right) =\mathrm{SB}\left( \mathcal{L},\mathbb{Z}%
	_{2}\right),
	$$
	is satisfied for any  $X\in 
	\mathfrak{g}$ such that $\widetilde{X}$ is transversal to $\mathcal{L}$. 
\end{definition}

\begin{example} Let $\mathcal{L}$ be a maximal circle in the sphere $S^2$
	considered as a homogeneous manifold, then $\mathcal{L}$ is locally tight, globally tight and 
	infinitesimally tight. This happens because the Hamiltonian vector fields on 
	$S^2$ are generated by the moments of rotation around the $x$,$y$ and $z$ axis.
\end{example}

\begin{theorem}\label{lvi}
	Let $G$ be a Lie group and $M$ a homogeneous space together with 
	a $G$-invariant symplectic form $\omega $.	Then a Lagrangian submanifold $\mathcal{L}\subset M$ is infinitesimally tight if and only if $\mathcal{L}$ is locally tight.
\end{theorem}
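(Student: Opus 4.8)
The plan is to establish the two implications separately, exploiting the fact that the flow of a fundamental vector field $\widetilde{X}$ is exactly the one-parameter subgroup action $x \mapsto e^{tX}\cdot x$, so that ``$g$ near the identity'' and ``$\widetilde{X}$ for $X$ near $0$'' describe essentially the same deformations of $\mathcal{L}$. First I would prove that locally tight implies infinitesimally tight. Suppose $X \in \mathfrak{g}$ has $\widetilde{X}$ transversal to $\mathcal{L}$ in the sense of Definition \ref{definfintight}. The key observation is that a point $x$ lies in $f_{\mathcal{L}}(X)$, i.e.\ $\widetilde{X}(x)=0$, precisely when $x$ is a fixed point of the flow $\phi_t(x)=e^{tX}\cdot x$, and for such $x$ one has $x \in \mathcal{L}\cap e^{tX}(\mathcal{L})$ for every $t$. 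The crux is to show that for $t>0$ small, $\mathcal{L}$ and $e^{tX}(\mathcal{L})$ meet \emph{transversally} and that the only intersection points are those in $f_{\mathcal{L}}(X)$; the first condition in Definition \ref{definfintight} (tangency forces vanishing) is designed to give exactly this, via a first-order analysis of the intersection as $t \to 0$. Then $\#(f_{\mathcal{L}}(X)) = \#(\mathcal{L}\cap e^{tX}(\mathcal{L})) = \mathrm{SB}(\mathcal{L},\mathbb{Z}_2)$ by local tightness, which is the required equality.

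For the converse, infinitesimally tight implies locally tight, I would take $g = e^{X}$ with $X$ small (using that the exponential map is a local diffeomorphism onto a neighbourhood of $e$, so every $g$ near the identity is of this form) and assume $\mathcal{L} \pitchfork g(\mathcal{L})$. One wants to produce from this transversality the transversality of $\widetilde{X}$ in the sense of Definition \ref{definfintight} and a bijection between $\mathcal{L}\cap e^{X}(\mathcal{L})$ and $f_{\mathcal{L}}(X)$. The natural device is a rescaling/continuity argument: consider the family $g_s = e^{sX}$ for $s\in(0,1]$; each $\mathcal{L}\cap g_s(\mathcal{L})$ is a compact $0$-manifold (once we argue transversality persists for a range of $s$, or pass to generic $s$), its cardinality is locally constant in $s$, and as $s\to 0^+$ the intersection points converge to points where $\widetilde{X}$ is tangent to $\mathcal{L}$, hence (by condition (1), once we know it holds) to points of $f_{\mathcal{L}}(X)$. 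Conversely each point of $f_{\mathcal{L}}(X)$ is a persistent intersection point for all $s$. Matching these gives $\#(\mathcal{L}\cap g(\mathcal{L})) = \#(f_{\mathcal{L}}(X)) = \mathrm{SB}(\mathcal{L},\mathbb{Z}_2)$.

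The main obstacle, and where I expect the real work to lie, is the bookkeeping connecting the global intersection $\mathcal{L}\cap e^{tX}(\mathcal{L})$ with the infinitesimal data $f_{\mathcal{L}}(X)$ as $t\to 0$: one must rule out ``extra'' intersection points that appear for small but nonzero $t$ and are not detected at the infinitesimal level, and conversely ensure that every tangency point of $\widetilde{X}$ actually contributes a genuine transverse intersection point for small $t\neq 0$. This is a transversality-stability argument: near a point $x_0$ with $\widetilde{X}(x_0)=0$, write the intersection condition $y \in \mathcal{L}\cap e^{tX}(\mathcal{L})$ as the vanishing of a map whose $t$-derivative at $t=0$ is controlled by $\widetilde{X}$, and apply the implicit function theorem; away from the zero set of $\widetilde{X}$, condition (1) says $\widetilde{X}$ is nowhere tangent to $\mathcal{L}$, so for $t$ small $e^{tX}(\mathcal{L})$ is pushed off $\mathcal{L}$ and no new intersections arise on compact pieces. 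Compactness of $\mathcal{L}$ (which we may assume, as in all the examples) makes these local statements uniform. Once this dictionary is in place, both implications follow by directly comparing the defining equalities of ``locally tight'' and ``infinitesimally tight''.
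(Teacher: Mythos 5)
Your plan is essentially the argument the paper gives: both directions reduce to a first-order dictionary, as $t\to 0$, between the intersection $\mathcal{L}\cap e^{tX}(\mathcal{L})$ and the zero set $f_{\mathcal{L}}(X)$ of $\widetilde{X}$ along $\mathcal{L}$. The one substantive difference is how that dictionary is set up. The paper first invokes Weinstein's neighborhood theorem to replace $M$ by $T^{*}\mathcal{L}$ and decomposes $\widetilde{X}(x)=i(x)\oplus v(x)$ into a component tangent to the zero section and a vertical component; condition (1) of Definition \ref{definfintight} then says exactly that the zeros of $v$ along $\mathcal{L}$ coincide with the zeros of $\widetilde{X}|_{\mathcal{L}}$, and for small $t$ the image $e^{tX}(\mathcal{L})$ is (to first order) a section of $T^{*}\mathcal{L}$ meeting the zero section precisely on the zero set of $v$. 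That reduction delivers, without any implicit function theorem, the two facts you correctly single out as ``the main obstacle'': away from $f_{\mathcal{L}}(X)$ the section is pushed off the zero section (so no extra intersection points appear), and each point of $f_{\mathcal{L}}(X)$ persists as an intersection point. Your direct rescaling and transversality-stability argument in $M$ works too, but it must re-derive these facts in local coordinates and, as you note, needs compactness of $\mathcal{L}$ to make the estimates uniform. Neither your sketch nor the paper's proof settles the remaining fine points --- that a given transverse $g=e^{X}$ near the identity actually yields an $\widetilde{X}$ transversal in the sense of Definition \ref{definfintight}, and that the zeros of $v$ are nondegenerate so each contributes exactly one transverse intersection point --- so on that score the two arguments sit at the same level of rigor.
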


\begin{proof}
	Let $\iota\colon \mathcal{L} \rightarrow M$ be a Lagrangian submanifold of $M$.
	By Weinstein's neighborhood Theorem \cite{We}, to decide whether $\mathcal{L}$ is locally tight or infinitesimally tight,
	we may assume that $M = T^*\mathcal{L}$. 
	Let us denote by $V_x:=\pi^{-1}(x)$ the (vertical) fibre of $\pi\colon T^*\mathcal{L} \rightarrow \mathcal{L}$ at $x$. Let $X \in \mathfrak g$ and  $\widetilde{X}$ the corresponding 
	fundamental  vector field. At each point $x\in \mathcal{L}$ we may write $$\widetilde{X}(x) = i(x) \oplus v(x),$$
	where $i(x)=(\iota^*\widetilde{X})(x) \in T_x\mathcal{L}$, $v(x) \in TV_x$, and $\oplus$ denotes metric orthogonal.\\
	Assume $\widetilde{X}$ is transversal to $\mathcal{L}$, then by definition, if $v(x) =0$, (that is, 
	if $\widetilde{X}(x)$ is tangent to $\mathcal{L}$), then we also have that $i(x)=0$. So that the zeros of 
	$\widetilde{Z}\vert_\mathcal{L} $ and the zeros of $v$ coincide. Let $t<<0$. Since $g=\exp tZ$ is close to the identity, then the submanifold
	$g(\mathcal{L})$
	intersects $\mathcal{L} $ at the points where $v$ vanishes. 
	But by the 
	assumption of transversality these are precisely the points
	where $\widetilde{X}$ vanishes. It is important to note that in this case the flow is determined by means of the exponential map of $G$. Now, assuming 
	that $\mathcal{L}$ is infinitesimally tight, it then follows that
	$\# (\mathcal{L} \cap g(\mathcal{L})) = \# f_\mathcal{L}(X) = \mathrm{SB}\left( \mathcal{L},\mathbb{Z}_{2}\right) $
	so that $\mathcal{L}$ is locally tight.\\
	Conversely, assume that $\mathcal{L}$ is locally tight. For each $x\in \mathcal{L}$ we follow the integral curve of $\widetilde{X}$
	until time $\epsilon<<0$ and call the new point $\mathcal{L}'(x)$. Then for small $\epsilon$, 
	the set of all such points 
	$\mathcal{L}'(x)$ with $x \in \mathcal{L}$ forms a new Lagrangian $\mathcal{L}'$ (here, the flow of every fundamental vector field determines a symplectomorphism. In fact, when we assume the existence of a moment map, as consequence of \ref{moment1} this symplectomorphism is actually Hamiltonian), then $\mathcal{L}'$ 
	is in fact a section of 
	$T^*\mathcal{L}$. Therefore, we have that 
	$\mathcal{L}' $ intersects the zero section $\mathcal{L}$ precisely at the points $x \in\mathcal{L}$ where 
	$\widetilde{X}$ vanishes, so that, assuming $\mathcal{L}$ is locally tight, we get
	$ \# f_\mathcal{L}(X)= \# (\mathcal{L} \cap g(\mathcal{L})) = \mathrm{SB}\left( \mathcal{L},\mathbb{Z}_{2}\right) $. Thus, $\mathcal{L}$ is infinitesimally tight. 
\end{proof}

\begin{remark} It is simple see that the conclusion of Theorem \ref{lvi} also holds true if $M$ is a symplectic manifold and $G$ acts on $M$ by symplectomorphisms. 
\end{remark}


\begin{example} \label{example-tight-inf} We consider the case of $\mathfrak{u}\left( 2\right) \subset \mathfrak{su}%
	\left( 3\right) $.
	Example \ref{example-su3} of section  \ref{secexflagcplx} considers the subgroup $%
	U_{H}\approx \mathrm{U}\left( 2\right) $ inside  $U=\mathrm{SU}\left(
	3\right) $ which has  Lie algebra given by 
	the matrices
	$$
	\mathfrak{u}_{H}=\left\{\left( 
	\begin{array}{cc}
	it & 0 \\ 
	0 & A%
	\end{array}%
	\right) :A\in \mathfrak{u}\left( 2\right) ,it+\mathrm{tr}A=0\right\}.
	$$%
	A  Lagrangian orbit $\mathcal{L}$ of this group occurs only inside the maximal flag $\mathbb{F}%
	\left( 1,2\right) $, in which case $\mathcal{L}=S^{3}$ in the space of 
	matrices ($\approx \mathbb{C}^{2}$), and we have
	$$
	\left( \mathfrak{u}_{H}\right) ^{\bot }=\left\{X_{\beta }=\left( 
	\begin{array}{cc}
	0 & \beta \\ 
	-\overline{\beta }^{T} & 0%
	\end{array}%
	\right) :\beta =\left( z_{1},z_{2}\right) \in \mathbb{C}^{2}\right\}.
	$$%
	More precisely, $\mathcal{L}=S^{3}=\Ad\left( U\right) \left( iH_{0}\right)
	\cap \left( \mathfrak{u}_{H}\right) ^{\bot }$ where $iH_{0}=\mathrm{diag}%
	\{i,0,-i\}$.
	If $X_{\beta },X_{\gamma }\in \left( \mathfrak{u}_{H}\right) ^{\bot }$, then 
	\begin{eqnarray*}
		\left[ X_{\beta },X_{\gamma }\right] &=&\left( 
		\begin{array}{cc}
			-\beta \overline{\gamma }^{T}+\gamma \overline{\beta }^{T} & 0 \\ 
			0 & -\overline{\beta }^{\im T}\gamma +\overline{\gamma }^{T}\beta%
		\end{array}%
		\right) \\
		&=&\left( 
		\begin{array}{cc}
			i\im\gamma \overline{\beta }^{T} & 0 \\ 
			0 & -\overline{\beta }^{T}\gamma +\overline{\gamma }^{T}\beta%
		\end{array}%
		\right) \in \mathfrak{u}_{H}.
	\end{eqnarray*}%
	In particular, if $x=X_{\gamma }\in \mathcal{L}$ and $X_{\beta }\in \left( \mathfrak{u}%
	_{H}\right) ^{\bot }$ then $\widetilde{X}_{\beta }\left( x\right) =%
	\ad\left( X_{\beta }\right) \left( x\right) \in \mathfrak{u}_{H}$ and $%
	\widetilde{X}_{\beta }\left( x\right) =0$ if and only if $\gamma \overline{%
		\beta }^{T}$ is  real. This happens if and only if  $\gamma $ is a
	real multiple of  $\pm \beta $. Therefore, any  $0\neq X_{\beta }\in \left( 
	\mathfrak{u}_{H}\right) ^{\bot }$ is  transversal to $\mathcal{L}$ (in the sense of 
	Definition \ref{definfintight}) and has singularities at the antipodal 
	points $\mathbb{R}X_{\beta }\cap S^{3}$.\\
	On the other hand, if $Y\in \mathfrak{u}\left( 2\right) =\mathfrak{u}_{H}$ then
	$\widetilde{Y}$ is  tangent to $\mathcal{L}=S^{3}$ and consequently is not
	transversal. Finally, if  $Z=X_{\beta }+Y$ with $X_{\beta }\neq 0\neq Y\in 
	\mathfrak{u}\left( 2\right) $ then $\widetilde{Z}\left( x\right) \notin
	T_{x}\mathcal{L}$ if $x$ is not a singularity of $\widetilde{X}_{\beta }$
	since in such a case $\widetilde{X}_{\beta }\left( x\right) \notin T_{x}\mathcal{L}$ is $%
	\widetilde{Y}\left( x\right) \in T_{x}\mathcal{L}$. Thus,  $Z=X_{\beta }+Y$ is 
	transversal to $\mathcal{L}$ if and only if the  singularities of $\widetilde{X}_{\beta
	} $ are also singularities of  $\widetilde{Y}$, which in turn occurs 
	if and only if  $\left[ Y,X_{\beta }\right] =0$, given that 
	the singularities of  $X_{\beta }$ belong to  $\mathbb{R}X_{\beta }$. The 
	condition  $\left[ Y,X_{\beta }\right] =0$ still holds true when $Y=0$, that 
	is, $Z=X_{\beta }$. Summing up, $Z=X_{\beta }+Y$ is  transversal if and only if  $X_{\beta }\neq 0$ and 
	$\left[ Y,X_{\beta }\right] =0$. Therefore, we conclude that  transversal elements have precisely  2 singularities, thus in agreement 
	with the sum of  Betti numbers of  $\mathcal{L}=S^{3}$. 
	
	
\end{example}
Hence, we have  obtained
\begin{proposition}
	The Lagrangian orbit  $\mathcal{L}=S^{3}$ of \, $\mathrm{U}\left( 2\right) $ in the  flag $%
	\mathbb{F}\left( 1,2\right) $ is infinitesimally tight.
\end{proposition}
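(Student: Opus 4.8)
The plan is to read off the answer from the computations already carried out in Example~\ref{example-tight-inf} and compare it with the topology of $S^3$. First note that $M=\mathbb{F}(1,2)$ is the homogeneous space $\mathrm{SU}(3)/T$, so the Lie algebra occurring in Definition~\ref{definfintight} is $\mathfrak{g}=\mathfrak{su}(3)$; since the Cartan--Killing form is non-degenerate we have the orthogonal decomposition $\mathfrak{su}(3)=\mathfrak{u}_H\oplus(\mathfrak{u}_H)^{\bot}$, so every $Z\in\mathfrak{su}(3)$ is uniquely of the form $Z=X_\beta+Y$ with $X_\beta\in(\mathfrak{u}_H)^{\bot}$ and $Y\in\mathfrak{u}_H$. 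This is exactly the case division treated in Example~\ref{example-tight-inf}, and $\mathcal{L}=S^3=\Ad(U)(iH_0)\cap(\mathfrak{u}_H)^{\bot}$ is the Lagrangian orbit considered there.

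Next I would recall the trichotomy established in that example. If $X_\beta=0$, i.e.\ $Z=Y\in\mathfrak{u}_H$, then $\widetilde{Z}$ is everywhere tangent to $\mathcal{L}$ (and identically zero when $Y=0$), so $Z$ is not transversal to $\mathcal{L}$. If $X_\beta\neq0$, then $Z=X_\beta+Y$ is transversal to $\mathcal{L}$ if and only if $[Y,X_\beta]=0$, and in that case the set $f_{\mathcal{L}}(Z)$ is the antipodal pair $\mathbb{R}X_\beta\cap S^3$. Since every element of $\mathfrak{su}(3)$ falls into one of these two cases, it follows that $\#\left(f_{\mathcal{L}}(Z)\right)=2$ for every $Z\in\mathfrak{su}(3)$ whose fundamental field $\widetilde{Z}$ is transversal to $\mathcal{L}$.

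Finally, $\mathrm{SB}(\mathcal{L},\mathbb{Z}_2)=\mathrm{SB}(S^3,\mathbb{Z}_2)=b_0+b_1+b_2+b_3=1+0+0+1=2$, so the equality $\#\left(f_{\mathcal{L}}(Z)\right)=\mathrm{SB}(\mathcal{L},\mathbb{Z}_2)$ holds for all transversal $Z$, which is precisely what Definition~\ref{definfintight} demands; hence $\mathcal{L}=S^3$ is infinitesimally tight, and by Theorem~\ref{lvi} it is also locally tight in $\mathbb{F}(1,2)$. Everything substantive --- the shape of $(\mathfrak{u}_H)^{\bot}$, the tangency of $\widetilde{Y}$ for $Y\in\mathfrak{u}_H$, the location of the singularities of $\widetilde{X}_\beta$, and the transversality criterion $[Y,X_\beta]=0$ for the mixed element $X_\beta+Y$ --- has already been done in Example~\ref{example-tight-inf}; the only remaining point is that the case division is exhaustive, which is immediate from the direct sum $\mathfrak{su}(3)=\mathfrak{u}_H\oplus(\mathfrak{u}_H)^{\bot}$ together with the observation that $Z=0$ fails transversality. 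Thus there is essentially no obstacle left; were one to reprove the statement from scratch, the delicate step would be the transversality analysis of $X_\beta+Y$, which is exactly the content of Example~\ref{example-tight-inf}.
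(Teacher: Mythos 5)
Your proposal is correct and follows exactly the paper's own route: the paper's proof of this proposition is precisely the content of Example \ref{example-tight-inf}, namely the decomposition $Z=X_\beta+Y$ relative to $\mathfrak{su}(3)=\mathfrak{u}_H\oplus(\mathfrak{u}_H)^{\bot}$, the criterion that $Z$ is transversal if and only if $X_\beta\neq0$ and $[Y,X_\beta]=0$, and the count of exactly two antipodal singularities matching $\mathrm{SB}(S^3,\mathbb{Z}_2)=2$. No gaps; your added remark that the case division is exhaustive is a harmless (and correct) point the paper leaves implicit.
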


\begin{example}(Diagonal action)
	In section \ref{secdiag} we established that 
	the set 
	$$
	\mathcal{L}=\{\left( X,-X\right) \in \Ad\left( U\right) \left( iH\right)
	\times \Ad\left( U\right) \left( -iH\right) :X\in \Ad\left(
	U\right) \left( iH\right) \}
	$$%
	inside the product of a flag $\mathbb{F}_{\Theta }=\Ad\left( U\right)
	\left( iH\right) $ by its  dual $\mathbb{F}_{\Theta ^{\ast }}=\Ad%
	\left( U\right) \left( -iH\right) $ is the unique  Lagrangian orbit of the 
	diagonal action of $U$ in  $\mathbb{F}_{\Theta }\times \mathbb{F}%
	_{\Theta ^{\ast }}$. This orbit is infinitesimally tight when  $%
	\mathbb{F}_{\Theta }\times \mathbb{F}_{\Theta ^{\ast }}=\Ad\left(
	U\right) \left( iH\right) \times \Ad\left( U\right) \left(
	-iH\right) $ is regarded as an adjoint orbit of $U\times U$.\\
	To verify it, the first step is to find elements  $\left(
	Y,Z\right) \in \mathfrak{u\times u}$ which are transversal to  $\mathcal{L}$ in 
	the sense of Definition \ref{definfintight}. The  tangent space to  $%
	\mathcal{L}$ at $\left( x,y\right) \in \mathcal{L}$ is given by  
	$$
	T_{\left( x,y\right) }\mathcal{L}=\left\{\left( \widetilde{A}\left( x\right) ,\widetilde{A}%
	\left( y\right) \right) :A\in \mathfrak{u}\right\}.
	$$%
	So that the  tangent space $T_{\left( X,-X\right) }\mathcal{L}$ of the obit in the
	product is 
	$$
	T_{\left( X,-X\right) }\mathcal{L}=\{\left( \left[ A,X\right] ,-\left[ A,X\right]
	\right) :A\in \mathfrak{u}\}.
	$$%
	Accordingly, $\left( \widetilde{Y}\left( x\right) ,\widetilde{Z}\left(
	y\right) \right) \in T_{\left( X,-X\right) }\mathcal{L}$ if and only if there exists $A\in 
	\mathfrak{u}$ such that $\left[ Y,X\right] =\left[ A,X\right] $ and $\left[ Z,X%
	\right] =-\left[ A,X\right] $, that is, $\left[ Y,X\right] =-\left[ Z,X%
	\right] $, or alternatively, precisely when  $X$ is a singularity of  $%
	\widetilde{Y+Z}$ in the  flag $\mathbb{F}_{\Theta }=\Ad\left( U\right)
	\left( iH\right) $. Therefore, the first condition for  transversality says that 
	$\widetilde{Y+Z}$ has a finite number of singularities over the  flag $%
	\mathbb{F}_{\Theta }$. The second condition  requires $\left( 
	\widetilde{Y}\left( x\right) ,\widetilde{Z}\left( y\right) \right) =0$ when  $%
	\left( \widetilde{Y}\left( x\right) ,\widetilde{Z}\left( y\right) \right)
	\in T_{\left( X,-X\right) }\mathcal{L}$, which means that  $\left[ Y,X\right] =-\left[ Z,X%
	\right] =0$. Consequently, a pair $\left( Y,Z\right) \in \mathfrak{u}\times \mathfrak{u}$ is 
	transversal  to the Lagrangian orbit  $\mathcal{L}$ of the  diagonal action on  $%
	\mathbb{F}_{\Theta }\times \mathbb{F}_{\Theta ^{\ast }}$ if and only if  $Y+Z$
	has a finite number of singularities on  $\mathbb{F}_{\Theta }$, which are 
	singularities of  $Y$  as well as singularities of $Z$.\\
	Now, if  $A\in \mathfrak{u}$ then $\widetilde{A}=\ad\left(
	A\right) $ on an adjoint orbit $\Ad\left( U\right) \left(
	iH\right) $ is the  Hamiltonian vector field of the height function $\hat{\mu}(A)\left(
	x\right) =\langle A,x\rangle $. Therefore, the singularities of  $\widetilde{A}$
	coincide with the singularities of  $\hat{\mu}(A)$. There are finitely many singularities
	if and only if  $A$ is a regular element. In such case, the singularities are 
	parametrized by the Weyl group $\mathcal{W}$. The number of singularities 
	equals the cardinality of  $\mathcal{W}/\mathcal{W}_{\Theta }$ (compare \cite{GGSM1}, Proposition 2.4).
\end{example}

Thus, we have obtained the following  
final characterization of the elements of  $\mathfrak{u}\times 
\mathfrak{u}$ that are transversal to  $\mathcal{L}$.

\begin{proposition}
	Let  $\mathbb{F}_{\Theta }=\mathrm{%
		Ad}\left( U\right) \left( iH\right) $. 
	A pair $\left( Y,Z\right) \in \mathfrak{u}\times \mathfrak{u}$ is 
	transversal to a  Lagrangian orbit $\mathcal{L}$ of the  diagonal action on  $%
	\mathbb{F}_{\Theta }\times \mathbb{F}_{\Theta ^{\ast }}$ if and only if  $Y+Z$
	is  regular in $\mathfrak{u}$ and both $Y$ and  $Z$ belong to the 
	intersection of the centralizers  $w\left( iH\right) $, for all  $w$
	in the  Weyl group $\mathcal{W}$.
\end{proposition}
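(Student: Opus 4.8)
Since the statement merely repackages the computations in the preceding Example, the plan is to assemble the pieces. First I would spell out what it means for a pair $(Y,Z)\in\mathfrak{u}\times\mathfrak{u}$ to be transversal to $\mathcal{L}$. Applying Definition \ref{definfintight} to the fundamental vector field $(\widetilde{Y},\widetilde{Z})$ of $(Y,Z)$ on $\mathbb{F}_{\Theta}\times\mathbb{F}_{\Theta^{\ast}}$ and using the description of $T_{(X,-X)}\mathcal{L}$ obtained there, the field $(\widetilde{Y},\widetilde{Z})$ is tangent to $\mathcal{L}$ at $(X,-X)$ precisely when $X$ is a singularity of $\widetilde{Y+Z}$ on $\mathbb{F}_{\Theta}$, and at such a point it vanishes if and only if $[Y,X]=0$ (equivalently $[Z,X]=0$). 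Reading off the two clauses of Definition \ref{definfintight}, the pair $(Y,Z)$ is therefore transversal to $\mathcal{L}$ if and only if: (a) every singularity of $\widetilde{Y+Z}$ on $\mathbb{F}_{\Theta}$ is also a singularity of $\widetilde{Y}$ and of $\widetilde{Z}$; and (b) $\widetilde{Y+Z}$ has only finitely many singularities on $\mathbb{F}_{\Theta}$.

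Next I would translate (a) and (b) into the language of the statement, using that on the adjoint orbit $\mathbb{F}_{\Theta}=\Ad(U)(iH)$ the field $\widetilde{A}=\ad(A)$ is the Hamiltonian field of the height function $\hat{\mu}(A)$, so its singularities are the critical points of $\hat{\mu}(A)$, i.e.\ the points $X$ of the orbit with $[A,X]=0$; as recalled in the Example, these form a finite set exactly when $A$ is regular, in which case (after conjugating so that $A\in\mathfrak{t}$) they are precisely the Weyl orbit $\{w(iH):w\in\mathcal{W}\}$, of cardinality $|\mathcal{W}/\mathcal{W}_{\Theta}|$. Applying this with $A=Y+Z$, clause (b) becomes ``$Y+Z$ is regular in $\mathfrak{u}$''; granting this, the singularities of $\widetilde{Y+Z}$ are the $w(iH)$, so clause (a) becomes $[Y,w(iH)]=[Z,w(iH)]=0$ for every $w\in\mathcal{W}$, that is, $Y$ and $Z$ both lie in the intersection over $w\in\mathcal{W}$ of the centralizers in $\mathfrak{u}$ of the elements $w(iH)$. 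This is exactly the asserted characterization.

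Two points call for care. The conjugation used to bring $Y+Z$ into $\mathfrak{t}$ is harmless because $\mathbb{F}_{\Theta}\times\mathbb{F}_{\Theta^{\ast}}$ and $\mathcal{L}$ are invariant under the diagonal $U$-action, which carries the fundamental field of $(Y,Z)$ to that of $(\Ad(g)Y,\Ad(g)Z)$; hence transversality and all the conditions above are conjugation-invariant and it suffices to treat one representative per conjugacy class. The only genuinely non-formal ingredient --- the step I would single out as the crux --- is the identification, for a regular $A$, of the critical set of $\hat{\mu}(A)$ on $\mathbb{F}_{\Theta}$ with the single $\mathcal{W}$-orbit $\{w(iH):w\in\mathcal{W}\}$: these critical points are the fixed points of the maximal torus $\overline{\exp\mathbb{R}A}$ acting on $\mathbb{F}_{\Theta}$, and for a maximal torus they are isolated and form one Weyl orbit in bijection with $\mathcal{W}/\mathcal{W}_{\Theta}$ (the Morse theory of the flag, already invoked in the Example and in \cite{GGSM1}); this is precisely where regularity of $Y+Z$ enters. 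Finally, it is worth noting that since $iH\neq0$ the intersection of centralizers appearing above is just $\mathfrak{t}$ (trivially when $H$ is regular, and for $\mathfrak{u}$ simple in general), so the criterion can equivalently be stated as: $[Y,Z]=0$ and $Y+Z$ is regular.
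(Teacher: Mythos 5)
Your proof follows exactly the route the paper takes: the proposition is obtained by reading the two clauses of Definition \ref{definfintight} against the computation of $T_{(X,-X)}\mathcal{L}$ in the preceding example, and then using the Morse theory of the height function $\hat{\mu}(A)$ to convert ``finitely many singularities'' into regularity of $A$ and to identify the singular set with the Weyl orbit of $iH$, of cardinality $|\mathcal{W}/\mathcal{W}_{\Theta}|$. Your additional remarks (conjugation-invariance of the transversality condition, and the reformulation of the centralizer condition) are sound and go slightly beyond what the paper records.

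There is, however, a sign slip in the key tangency computation, which you inherit from the example rather than verifying independently. At a point $(X,-X)\in\mathcal{L}$ the fundamental field of $(Y,Z)$ is $([Y,X],[Z,-X])=([Y,X],-[Z,X])$, while $T_{(X,-X)}\mathcal{L}=\{([A,X],-[A,X]):A\in\mathfrak{u}\}$; equating components gives $[Y,X]=[A,X]$ and $[Z,X]=[A,X]$, so tangency holds precisely when $[Y-Z,X]=0$, not when $[Y+Z,X]=0$. A sanity check makes the discrepancy concrete: for $Y=Z=A$ with $A$ regular, the field $(\widetilde{A},\widetilde{A})$ generates the diagonal action, of which $\mathcal{L}$ is an orbit, so it is tangent to $\mathcal{L}$ everywhere; this is consistent with $[Y-Z,X]=[0,X]\equiv 0$ but not with $[Y+Z,X]=[2A,X]$, which vanishes only at finitely many points, so the ``$Y+Z$'' criterion would wrongly declare such a pair transversal. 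The statement your argument actually establishes, once the sign is corrected, replaces ``$Y+Z$ regular'' by ``$Y-Z$ regular'' (and your closing reformulation becomes ``$[Y,Z]=0$ and $Y-Z$ regular''). Everything else --- the vanishing criterion $[Y,X]=0$ at tangency points, the count $|\mathcal{W}/\mathcal{W}_{\Theta}|$ of singularities of a transversal pair, and hence the corollary on infinitesimal tightness --- goes through unchanged.
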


\begin{corollary}
	The  Lagrangian orbit  $\mathcal{L}$ of the diagonal action  is  infinitesimally tight in $\mathbb{F}_{\Theta }\times \mathbb{F}_{\Theta ^{\ast }}$.
\end{corollary}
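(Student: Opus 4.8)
The plan is to combine the characterization of transversal elements given in the preceding Proposition with the classical enumeration of Schubert cells in a flag manifold. Throughout, I regard $\mathcal{L}$ as the image of $\mathbb{F}_{\Theta}=\Ad(U)(iH)$ under the diffeomorphism $X\mapsto(X,-X)$ (this is the graph of $-\mathrm{id}$ appearing in Proposition \ref{diagonalL}), so that $\mathcal{L}$ and $\mathbb{F}_{\Theta}$ are diffeomorphic and in particular have the same Betti numbers.

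First I would compute $f_{\mathcal{L}}(Y,Z)$ for a pair $(Y,Z)\in\mathfrak{u}\times\mathfrak{u}$ transversal to $\mathcal{L}$. From the description of $T_{(X,-X)}\mathcal{L}$ obtained above, a point $(X,-X)$ belongs to $f_{\mathcal{L}}(Y,Z)$ exactly when $[Y,X]=-[Z,X]=0$. By the Proposition, transversality forces $Y$ and $Z$ to lie in every centralizer $\mathfrak{z}_{w(iH)}$, $w\in\mathcal{W}$, so that $[Y,w(iH)]=[Z,w(iH)]=0$ for all $w$; hence $f_{\mathcal{L}}(Y,Z)=\{X\in\mathbb{F}_{\Theta}:[Y+Z,X]=0\}$ is precisely the set of zeros of the Hamiltonian (height) vector field $\widetilde{Y+Z}$ on $\mathbb{F}_{\Theta}$. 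Since transversality also requires $Y+Z$ to be regular, this set consists of the fixed points of the torus generated by $\exp\bigl(t(Y+Z)\bigr)$ on $\Ad(U)(iH)$, which are in bijection with $\mathcal{W}/\mathcal{W}_{\Theta}$ (see the discussion preceding this corollary and \cite{GGSM1}). Therefore $\#\bigl(f_{\mathcal{L}}(Y,Z)\bigr)=|\mathcal{W}/\mathcal{W}_{\Theta}|$ for every transversal pair; moreover such pairs exist, for taking $Y=Z$ with $2Y$ a regular element of a fixed maximal torus $\mathfrak{t}\subseteq\bigcap_{w}\mathfrak{z}_{w(iH)}$ makes $(Y,Y)$ transversal by the Proposition.

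It remains to identify $\mathrm{SB}(\mathcal{L},\mathbb{Z}_{2})$ with the same number. The flag manifold $\mathbb{F}_{\Theta}\cong U/U_{\Theta}$ carries the Bruhat cell decomposition with cells indexed by $\mathcal{W}/\mathcal{W}_{\Theta}$, all of even real dimension; consequently its integral homology is free and concentrated in even degrees, the $\mathbb{Z}_{2}$-Betti numbers agree with the integral ones, and their sum equals the number of cells, namely $|\mathcal{W}/\mathcal{W}_{\Theta}|$. Since $\mathcal{L}\cong\mathbb{F}_{\Theta}$, this gives $\mathrm{SB}(\mathcal{L},\mathbb{Z}_{2})=|\mathcal{W}/\mathcal{W}_{\Theta}|$. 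Combining this with the previous paragraph, every pair $(Y,Z)\in\mathfrak{u}\times\mathfrak{u}$ transversal to $\mathcal{L}$ satisfies $\#\bigl(f_{\mathcal{L}}(Y,Z)\bigr)=|\mathcal{W}/\mathcal{W}_{\Theta}|=\mathrm{SB}(\mathcal{L},\mathbb{Z}_{2})$, which is exactly the definition of infinitesimal tightness.

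The only step that is not a formal consequence of the preceding Proposition is the last one: one must use that the Schubert cells of $\mathbb{F}_{\Theta}$ occur only in even dimension, so that no odd-degree classes and no $2$-torsion inflate the $\mathbb{Z}_{2}$-Betti sum beyond $|\mathcal{W}/\mathcal{W}_{\Theta}|$. This is standard, and once granted the corollary is immediate. (Alternatively, by Theorem \ref{lvi} it would suffice to know $\mathcal{L}$ is locally tight, but the direct count above is the most transparent route.)
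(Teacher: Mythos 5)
Your proposal is correct and follows essentially the same route as the paper: identify $\mathcal{L}$ with $\mathbb{F}_{\Theta}$, use the preceding proposition to see that $f_{\mathcal{L}}(Y,Z)$ is the fixed-point set of the regular element $Y+Z$, hence has cardinality $|\mathcal{W}/\mathcal{W}_{\Theta}|$, and observe that this equals $\mathrm{SB}(\mathbb{F}_{\Theta},\mathbb{Z}_{2})$ because the homology of a flag manifold is free and concentrated in even degrees. The only cosmetic difference is that you justify the Betti-number count via the Bruhat cell decomposition while the paper phrases it as the height function of a regular element being a perfect Morse function; these are the same fact, and your version simply spells out the details the paper leaves implicit.
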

\begin{proof}
	The Lagrangian orbit $\mathcal{L}$ is diffeomorphic to  $\mathbb{F}_{\Theta }$ and the sum of the
	Betti numbers of  $\mathbb{F}_{\Theta }$ (also with $\mathbb{Z}$ coefficients)
	is the number of fixed points (singularities) of a  regular element, which is 
	the cardinality of the quotient $\mathcal{W}/\mathcal{W}_{\Theta }$.
\end{proof}

A similar argument gives us a more general collection of infinitesimally tight Lagrangians:

\begin{corollary}\label{shifted}
	The  Lagrangian orbits  of type  
	$$\Gamma\left\lbrace-\Ad\left( m\right) :%
	\Ad\left( U\right) \left( iH\right) \rightarrow \Ad\left(
	U\right) \left( i\sigma \left( H\right) \right)\right\rbrace$$ corresponding to the  shifted diagonals $\Delta^m$  are  infinitesimally tight in $\mathbb{F}_{\Theta }\times \mathbb{F}_{\Theta ^{\ast }}$.
\end{corollary}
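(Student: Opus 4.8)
The plan is to repeat the argument of the preceding corollary after twisting everything by $\Ad(m)$. Write $\mathcal{L}_m=\{(X,-\Ad(m)X):X\in\Ad(U)(iH)\}$ for the Lagrangian orbit of $\Delta^{m}$ furnished by Proposition \ref{shifted11}, regarded inside $\mathbb{F}_\Theta\times\mathbb{F}_{\Theta^\ast}=\Ad(U\times U)(iH,i\sigma(H))$ with the sum of the two KKS forms, so that the fundamental vector field of $(Y,Z)\in\mathfrak{u}\times\mathfrak{u}$ is $\widetilde{(Y,Z)}(x,y)=([Y,x],[Z,y])$ and is Hamiltonian. First I would compute the tangent space: the fundamental vector field of $(A,\Ad(m)A)\in\Delta^{m}_{\mathfrak{u}}$ at $(X,-\Ad(m)X)$ equals $([A,X],-\Ad(m)[A,X])$ since $[\Ad(m)A,\Ad(m)X]=\Ad(m)[A,X]$, whence
\begin{equation*}
T_{(X,-\Ad(m)X)}\mathcal{L}_m=\{([A,X],-\Ad(m)[A,X]):A\in\mathfrak{u}\}.
\end{equation*}

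Next I would characterize the transversal elements. For $(Y,Z)\in\mathfrak{u}\times\mathfrak{u}$ the vector $([Y,X],[Z,-\Ad(m)X])$ lies in $T_{(X,-\Ad(m)X)}\mathcal{L}_m$ if and only if there is $A\in\mathfrak{u}$ with $[A,X]=[Y,X]$ and $[A,X]=[\Ad(m^{-1})Z,X]$, that is, exactly when $[\,Y-\Ad(m^{-1})Z,\,X\,]=0$; so $\widetilde{(Y,Z)}$ is tangent to $\mathcal{L}_m$ at $(X,-\Ad(m)X)$ precisely when $X$ is a singularity of $\widetilde{Y-\Ad(m^{-1})Z}$ on $\mathbb{F}_\Theta=\Ad(U)(iH)$. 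Condition (1) of Definition \ref{definfintight} then amounts to finiteness of this singular set, i.e.\ to $Y-\Ad(m^{-1})Z$ being a regular element of $\mathfrak{u}$, while condition (2) requires that at any such $X$ one also has $[Y,X]=0$ and (applying $\Ad(m^{-1})$ to $[Z,\Ad(m)X]=0$) $[\Ad(m^{-1})Z,X]=0$. Using that $\widetilde{A}=\ad(A)$ on $\Ad(U)(iH)$ is the Hamiltonian field of $\langle A,\cdot\rangle$, with finite critical set exactly when $A$ is regular and then equal to the Weyl translates $w(iH)$, $w\in\mathcal{W}$, this yields: $(Y,Z)$ is transversal to $\mathcal{L}_m$ if and only if $Y-\Ad(m^{-1})Z$ is regular and both $Y$ and $\Ad(m^{-1})Z$ lie in the intersection of the centralizers of the singularities of $\widetilde{Y-\Ad(m^{-1})Z}$ on $\mathbb{F}_\Theta$. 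Such pairs exist — e.g.\ $Z=0$ and $Y\in\mathfrak{t}$ regular, since $\mathfrak{t}$ centralizes all $w(iH)$.

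Finally I would count intersection points. For transversal $(Y,Z)$ the first projection identifies $f_{\mathcal{L}_m}((Y,Z))$ with the singular set of the regular element $Y-\Ad(m^{-1})Z$ on $\mathbb{F}_\Theta$, of cardinality $|\mathcal{W}/\mathcal{W}_\Theta|$. On the other hand $\mathcal{L}_m$ is the graph of $-\Ad(m):\Ad(U)(iH)\to\Ad(U)(i\sigma(H))$, hence diffeomorphic to $\mathbb{F}_\Theta$, whose $\mathbb{Z}_2$-Betti numbers sum to $|\mathcal{W}/\mathcal{W}_\Theta|$ because the cohomology of a flag manifold is torsion-free and concentrated in even degrees with Schubert basis indexed by $\mathcal{W}/\mathcal{W}_\Theta$. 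Thus $\#f_{\mathcal{L}_m}((Y,Z))=\mathrm{SB}(\mathcal{L}_m,\mathbb{Z}_2)$ for every transversal $(Y,Z)$, so $\mathcal{L}_m$ is infinitesimally tight, and locally tight by Theorem \ref{lvi}. The only step demanding genuine care — everything else being a transcription of the $m=e$ case — is the consistent bookkeeping of the conjugation by $m$: one must check that the single element $Y-\Ad(m^{-1})Z$ controls both transversality conditions and that its singular set on $\mathbb{F}_\Theta$ is simultaneously the common zero locus of $\widetilde{Y}$ and of the $m$-twisted field $\widetilde{\Ad(m^{-1})Z}$, which rests on a regular element's centralizer being a Cartan subalgebra containing all the relevant $w(iH)$.
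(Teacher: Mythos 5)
Your proposal is correct and follows essentially the same route as the paper, which proves the corollary by invoking ``a similar argument'' to the diagonal case $m=e$ (transversal pairs correspond to regular elements with $\lvert\mathcal{W}/\mathcal{W}_{\Theta}\rvert$ singularities, matching $\mathrm{SB}(\mathbb{F}_{\Theta},\mathbb{Z}_2)$); you simply carry out the $\Ad(m)$-twisted bookkeeping explicitly. The only discrepancy is a sign: your tangency condition $[\,Y-\Ad(m^{-1})Z,\,X\,]=0$ specializes at $m=e$ to $Y-Z$ rather than the paper's $Y+Z$, and a direct check of $\widetilde{A}(-X)=-[A,X]$ shows your sign is the consistent one, with no effect on the conclusion.
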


\appendix
\section{KKS symplectic form on orthogonal groups}\label{OrthogonalLieGroups}
Transferring the KKS symplectic form  to  a adjoint orbit of a semisimple Lie group it is a well known construction, see for instance \cite{ABB}. We  describe the more general case of the KKS symplectic form on the adjoint orbit of an orthogonal Lie group. Let $G$ be a real connected Lie group of dimension $n$ and $\mathfrak{g}$ its Lie algebra. For every $g\in G$, we denote by $L_g:G\to G$ (resp. $R_g:G\to G$) the left (resp. right) multiplication of $g$ on $G$. Recall that a {\it pseudo-metric} on a smooth manifold is a symmetric and nondegenerate $(0,2)$ tensor field with constant index. A pseudo-metric $\langle\cdot,\cdot\rangle$ over $G$ is called {\it bi-invariant} if 
$L_g$ and $R_g$ are isometries of $(G,\langle\cdot,\cdot\rangle)$ for all $g\in G$. If $\textnormal{Ad}:G\to\textnormal{GL}(\mathfrak{g})$ and $\textnormal{ad}:\mathfrak{g}\to\mathfrak{gl}(\mathfrak{g})$ denote the adjoint representations of $G$ and $\mathfrak{g}$, respectively, then to have a bi-invariant pseudo-metric $\langle\cdot,\cdot\rangle$ on $G$ is equivalent to having a scalar product $\langle\cdot,\cdot\rangle_0:\mathfrak{g}\times \mathfrak{g}\to \mathbb{R}$\footnote{If $V$ is a finite-dimensional space a scalar product on $V$ is an application $\mu_0:V\times V\to\mathbb{R}$ which is bilinear symmetric and nondegenerate.} such that any of the following statements are satisfied
\begin{enumerate}
	\item $\textnormal{Ad}_g:\mathfrak{g}\to \mathfrak{g}$ is a linear isometry of $(\mathfrak{g},\langle\cdot,\cdot\rangle_0)$ for all $g\in G$, that is
	\begin{equation}\label{Inv1}
	\langle\textnormal{Ad}(g) X,\textnormal{Ad}(g)Y\rangle_0=\langle X,Y\rangle_0,\qquad g\in G,\quad X,Y\in\mathfrak{g}.
	\end{equation}
	\item $\textnormal{ad}(X):\mathfrak{g}\to \mathfrak{g}$ is an infinitesimal isometry of $(\mathfrak{g},\langle\cdot,\cdot\rangle_0)$ for all $X\in\mathfrak{g}$, that is
	\begin{equation}\label{Inv2}
	\langle \textnormal{ad}(X)(Y),Z\rangle_0+\langle Y,\textnormal{ad}(X)Z\rangle_0=0,\qquad X,Y,Z\in\mathfrak{g}.
	\end{equation}
\end{enumerate}
A scalar product which satisfies the identity (\ref{Inv2}) is called \emph{invariant}.
\begin{definition}
	\begin{enumerate}\label{ort}
		\item The pair $(G,\langle\cdot,\cdot\rangle)$ is called an \emph{orthogonal Lie group} if $\langle\cdot,\cdot\rangle$ is a bi-invariant pseudo-metric on $G$.
		\item If $\mathfrak{g}$ is a finite-dimensional Lie algebra and $\langle\cdot,\cdot\rangle_0$ is an invariant scalar product on $\mathfrak{g}$ the pair $(\mathfrak{g},\langle\cdot,\cdot\rangle_0)$ is called an \emph{orthogonal Lie algebra}.
	\end{enumerate}
\end{definition}
It is simple to see that there exists a bijective correspondence between simply connected orthogonal Lie groups and orthogonal Lie algebras. If $\langle\cdot,\cdot\rangle_0:\mathfrak{g}\times\mathfrak{g}\to \mathbb{R}$ is an invariant scalar product, then
$$\langle u(g),v(g)\rangle_g:=\langle(\textnormal{d}L_{g^{-1}})_g(u(g)),(\textnormal{d}L_{g^{-1}})_g(v(g))\rangle_0\qquad g\in G,
$$
is a bi-invariant pseudo-metric on $G$. 
\begin{example}
	Examples of orthogonal Lie groups are the compact Lie groups, semisimple Lie groups, the cotangent bundle of a Lie group, and the $\lambda$-oscillator groups. For the last example see \cite{M}.
\end{example}
Recall that the tangent space to the adjoint orbit $\Ad(G)(H)$ of an element $H\in\mathfrak{g}$ is given by $T_{X_0}(\Ad(G)(H))=\lbrace [X,X_0]:\ X\in \mathfrak{g} \rbrace$ where $\widetilde{X}=\textnormal{ad}(X)$ is the fundamental vector field by the adjoint action associated to $X\in\mathfrak{g}$. On the other hand, if $\mathfrak{g}^\ast$ denotes the dual vector space of $\mathfrak{g}$ and $\Ad^\ast:G\to \textnormal{GL}(\mathfrak{g}^\ast)$ the coadjoint representation of $G$, the tangent space to the coadjoint orbit $\Ad^\ast(G)(\alpha)$ of an element $\alpha\in\mathfrak{g}^\ast$ is $T_{\beta}(\Ad^\ast(G)(\alpha))=\lbrace -\beta\circ \textnormal{ad}(X):\ X\in \mathfrak{g} \rbrace$. Here $\widetilde{X}^\ast=\textnormal{ad}^\ast(X)$ is the fundamental vector field by the coadjoint action of $X\in\mathfrak{g}$. Let $(G,\langle\cdot,\cdot\rangle)$ be an orthogonal Lie group and $(\mathfrak{g},\langle\cdot,\cdot\rangle_e)$ its respective orthogonal Lie algebra. As $\langle\cdot,\cdot\rangle_e:\mathfrak{g}\times\mathfrak{g}\to\mathbb{R}$ is nondegenerate, the map $\varphi:\mathfrak{g}\to\mathfrak{g}^\ast$ defined by $\varphi(X)=\langle X,\cdot\rangle_e$ is a linear isomorphism. If $\alpha\in\mathfrak{g}^\ast$, we denote by $X_\alpha$ the only $X_\alpha\in \mathfrak{g}$ such that $\alpha(\cdot)=\varphi(X_\alpha)$.
\begin{lemma}\label{Lemma1}
	If $(G,\langle\cdot,\cdot\rangle)$ is an orthogonal Lie group, then $\varphi:\mathfrak{g}\to\mathfrak{g}^\ast$ is equivariant with respect the adjoint and coadjoint actions. Consequently, the adjoint and coadjoint representations of $\mathfrak{g}$ are isomorphic.
\end{lemma}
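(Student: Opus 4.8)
The plan is a direct verification that $\varphi$ intertwines the two actions, using nothing beyond the bi-invariance of the scalar product recorded in Definition \ref{ort} in the equivalent forms \eqref{Inv1} and \eqref{Inv2}. First I would fix $g\in G$ and $X,Y\in\mathfrak{g}$ and unwind both sides of the desired identity $\varphi(\Ad(g)X)=\Ad^\ast(g)\varphi(X)$ evaluated at $Y$: the left-hand side is $\langle \Ad(g)X,Y\rangle_e$, while by the definition of the coadjoint action the right-hand side is $\varphi(X)(\Ad(g^{-1})Y)=\langle X,\Ad(g^{-1})Y\rangle_e$. Applying $\Ad(g)$ to both arguments of the latter and using \eqref{Inv1} gives $\langle X,\Ad(g^{-1})Y\rangle_e=\langle \Ad(g)X,\Ad(g)\Ad(g^{-1})Y\rangle_e=\langle \Ad(g)X,Y\rangle_e$, so the two expressions agree; since $Y$ is arbitrary, the equivariance follows.

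For the infinitesimal conclusion — that $(\mathfrak{g},\ad)$ and $(\mathfrak{g}^\ast,\ad^\ast)$ are isomorphic as $\mathfrak{g}$-representations — I would argue in the same spirit, either by differentiating the group-level identity along $t\mapsto e^{tX}$ at $t=0$, or directly from \eqref{Inv2}. Directly: for $X,Y,Z\in\mathfrak{g}$ one computes $\varphi(\ad(X)Y)(Z)=\langle[X,Y],Z\rangle_e$ and, using $\ad^\ast(X)\beta=-\beta\circ\ad(X)$ (the sign convention fixed by the description of $T_\beta(\Ad^\ast(G)(\alpha))$ given earlier in this appendix), $(\ad^\ast(X)\varphi(Y))(Z)=-\langle Y,[X,Z]\rangle_e$; identity \eqref{Inv2} is exactly the statement that these coincide. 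Combined with the already-noted fact that $\varphi$ is a linear isomorphism (since $\langle\cdot,\cdot\rangle_e$ is nondegenerate), this shows $\varphi$ is an isomorphism of representations.

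I do not expect any genuine difficulty here; the entire content is the dictionary between bi-invariance of $\langle\cdot,\cdot\rangle$ and equivariance of $\varphi$. The only place that calls for a bit of care is keeping the sign conventions for $\Ad^\ast$ and $\ad^\ast$ consistent with those used elsewhere in the paper, after which both identities drop out of \eqref{Inv1} and \eqref{Inv2} respectively.
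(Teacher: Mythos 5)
Your proof is correct and follows essentially the same route as the paper's: the group-level equivariance is read off directly from the invariance identity \eqref{Inv1}, and the infinitesimal statement is obtained by differentiating along $t\mapsto e^{tX}$ at $t=0$ (the paper does exactly this; your alternative direct verification via \eqref{Inv2} is just the same computation written infinitesimally). You simply spell out the details that the paper compresses into ``immediate consequence of \eqref{Inv1}.''
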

\begin{proof}
	The first claim is an immediate consequence of identity (\ref{Inv1}). If we put $g=e^{tX}$ in the formula $\textnormal{Ad}^\ast(g)\circ \varphi=\varphi\circ \textnormal{Ad}(g)$ and apply derivative at $t=0$ to both sides of the last formula we get that $\textnormal{ad}^\ast(X)\circ \varphi=\varphi\circ \textnormal{ad}(X)$ for all $X \in \mathfrak{g}$.
\end{proof}
Assume $(G,\langle\cdot,\cdot\rangle)$ is an orthogonal Lie group. It is simple to see that Lemma \ref{Lemma1} implies that $\varphi^{-1}\left(\Ad^\ast(G)(\alpha)\right)=\Ad(G)(X_\alpha)$ for all element $\alpha$ of $\mathfrak{g}^\ast$. Therefore, we have that
\begin{proposition}
	If $(G,\langle\cdot,\cdot\rangle)$ is an orthogonal Lie group, $\varphi:\mathfrak{g}\to \mathfrak{g}^\ast$ maps adjoint orbits diffeomorphically and  $G$-equivariantly onto coadjoint orbits. Moreover, there exists a symplectic form $\Omega$ on the adjoint orbit $\Ad(G)(X)$ such that the adjoint action restricted to $\Ad(G)(X)$ determines a symplectic action of $G$.
\end{proposition}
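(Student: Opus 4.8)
The plan is to transport the canonical Kirillov--Kostant--Souriau (KKS) symplectic structure from a coadjoint orbit to the corresponding adjoint orbit along $\varphi$, and then to record the transported form explicitly. First I would dispose of the diffeomorphism and equivariance assertions: since $\varphi\colon\mathfrak g\to\mathfrak g^\ast$, $\varphi(Y)=\langle Y,\cdot\rangle_e$, is a linear isomorphism it is in particular a diffeomorphism of the ambient vector spaces, and by Lemma \ref{Lemma1} it intertwines the two actions, $\varphi\circ\Ad(g)=\Ad^\ast(g)\circ\varphi$ for all $g\in G$. Consequently the restriction of $\varphi$ to an adjoint orbit $\Ad(G)(X)$ is a $G$-equivariant bijection onto its image, and by the identity $\varphi^{-1}\bigl(\Ad^\ast(G)(\alpha)\bigr)=\Ad(G)(X_\alpha)$ recorded just above the proposition, this image is precisely the coadjoint orbit $\Ad^\ast(G)(\varphi(X))$. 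Viewing both orbits as homogeneous spaces of $G$ with the common isotropy subgroup $G_X=G_{\varphi(X)}$, an equivariant bijection between homogeneous spaces of $G$ is automatically a diffeomorphism; this settles the first claim.

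For the symplectic form I would invoke the standard KKS form $\omega^{\mathrm{KKS}}$ on the coadjoint orbit $\mathcal O=\Ad^\ast(G)(\varphi(X))$, determined at $\beta\in\mathcal O$ by $\omega^{\mathrm{KKS}}_\beta\bigl(\widetilde{Y}^\ast(\beta),\widetilde{Z}^\ast(\beta)\bigr)=\beta([Y,Z])$ and known to be closed, nondegenerate and $\Ad^\ast(G)$-invariant, and then set
$$\Omega:=\bigl(\varphi|_{\Ad(G)(X)}\bigr)^\ast\omega^{\mathrm{KKS}}.$$
Being the pullback of a symplectic form by a diffeomorphism, $\Omega$ is again closed and nondegenerate, hence symplectic; and since $\varphi$ is $G$-equivariant while $\omega^{\mathrm{KKS}}$ is $\Ad^\ast(G)$-invariant, $\Omega$ is $\Ad(G)$-invariant, so the adjoint action restricted to $\Ad(G)(X)$ is a symplectic action, as claimed.

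Finally I would make $\Omega$ concrete. At $X_0\in\Ad(G)(X)$ one has $T_{X_0}\bigl(\Ad(G)(X)\bigr)=\{\widetilde{Y}(X_0)=[Y,X_0]:Y\in\mathfrak g\}$; differentiating $\varphi\circ\Ad(g)=\Ad^\ast(g)\circ\varphi$ at $g=e$ in the direction $Y$ gives the infinitesimal identity $d\varphi_{X_0}\bigl([Y,X_0]\bigr)=\widetilde{Y}^\ast(\varphi(X_0))$, and hence
$$\Omega_{X_0}\bigl(\widetilde{Y}(X_0),\widetilde{Z}(X_0)\bigr)=\varphi(X_0)\bigl([Y,Z]\bigr)=\langle X_0,[Y,Z]\rangle_e,$$
which recovers the KKS formula on adjoint orbits used in Section \ref{secexflagcplx}. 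As an internal check one verifies directly from the invariance \eqref{Inv2} of $\langle\cdot,\cdot\rangle_e$ that the right-hand side is well defined (if $[Y,X_0]=0$ then $\langle X_0,[Y,Z]\rangle_e=\langle[X_0,Y],Z\rangle_e=0$ for every $Z$) and nondegenerate (if $\langle X_0,[Y,Z]\rangle_e=0$ for every $Z$, then $[Y,X_0]=0$ since $\langle\cdot,\cdot\rangle_e$ is nondegenerate, so $\widetilde{Y}(X_0)=0$). The one step I expect to require genuine care is the passage from the global equivariance of $\varphi$ to this pointwise infinitesimal identity at an \emph{arbitrary} $X_0$ of the orbit, since it is precisely this that certifies that the transported form agrees with the explicit expression displayed above; once that is in hand, closedness of $\Omega$ can, if one prefers, also be re-derived directly from the formula via the Jacobi identity together with \eqref{Inv2}.
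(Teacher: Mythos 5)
Your proposal is correct and follows essentially the same route as the paper: both establish the first claim from the linear isomorphism $\varphi$ together with the equivariance of Lemma \ref{Lemma1}, and both obtain $\Omega$ by pulling back the KKS form on the coadjoint orbit along $\varphi$, deducing $\Ad(G)$-invariance from the identity $\varphi\circ\Ad(g)=\Ad^\ast(g)\circ\varphi$ and ending with the explicit formula $\Omega_{X_0}\bigl(\widetilde{Y}(X_0),\widetilde{Z}(X_0)\bigr)=\langle X_0,[Y,Z]\rangle_e$. Your additional verifications (the homogeneous-space argument for the diffeomorphism, the infinitesimal identity $d\varphi_{X_0}([Y,X_0])=\widetilde{Y}^\ast(\varphi(X_0))$, and the well-definedness and nondegeneracy checks) only make explicit steps the paper leaves implicit.
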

\begin{proof}
	If we consider the restriction of the adjoint action and the coadjoint action of $G$ on $\Ad(G)(X)$ and $\Ad^\ast(G)(\varphi(X))$, respectively, the first claim is clear. On the other hand, it is well known that for all $\alpha\in\mathfrak{g}^\ast$, the coadjoint orbit $\Ad^\ast(G)(\alpha)$ is a symplectic manifold with the symplectic for given by
	$$
	\omega_\beta(\textnormal{ad}^\ast(Y)(\beta),\textnormal{ad}^\ast(Y)(\beta))=\beta([X,Y]).$$
	Moreover, it is holds that for all $g\in G$, the map $\textnormal{Ad}^\ast(g)|_{\Ad^\ast(G)(\alpha)}$ is a symplectomorphism of $(\Ad^\ast(G)(\alpha),\omega)$. Therefore, the pullback of $\omega$ by $\varphi$ induces a symplectic form $\Omega$ on the adjoint orbit $\Ad(G)(X_\alpha)$  which satisfies that $\textnormal{Ad}(g)|_{\Ad(G)(X_\alpha)}$ is a symplectomorphism of $(\Ad(G)(X_\alpha),\Omega)$ for all $g\in G$ since by Lemma \ref{Lemma1} we have that
	$$		\left(\textnormal{Ad}(g)\right)^\ast \Omega = \left(\textnormal{Ad}(g)\right)^\ast (\varphi^\ast \omega)  =  (\varphi\circ \textnormal{Ad}(g))^\ast \omega = (\textnormal{Ad}^\ast(g)\circ \varphi)^\ast \omega = \varphi^\ast \omega = \Omega.$$
	Explicitly, the symplectic form on $\Ad(G)(X)$ is given by
	$$\omega_{X_0}(\textnormal{ad}(Y)(X_0),\textnormal{ad}(Z)(X_0))=\varphi(X_0)([Y,Z])=\langle X_0,[X,Y]\rangle_e.$$
\end{proof}
An immediate consequence  is
\begin{corollary}
	The map $\mu:\Ad(G)(X)\to \mathfrak{g}^\ast$ defined by $\mu(X_0)=\varphi(X_0)$ defines an $\Ad^\ast$-equivariant moment map for $(\Ad(G)(X_\alpha),\Omega)$.
\end{corollary}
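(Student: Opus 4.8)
The plan is to verify directly the two defining properties of a moment map, using the explicit description of $\Omega$ obtained in the preceding proposition together with the $\ad$-invariance \eqref{Inv2} of the scalar product. Throughout I write $X_0$ for a generic point of the orbit $\Ad(G)(X_\alpha)$ and reserve $X,Y\in\mathfrak{g}$ for Lie algebra elements, playing the roles of moment parameter and tangent direction respectively.

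First I would dispose of the equivariance, which is essentially immediate. Since $\mu$ is nothing but the restriction of $\varphi$ to the orbit, Lemma \ref{Lemma1} (that $\varphi$ intertwines the adjoint and coadjoint actions) gives
$$\mu(\Ad(g)X_0)=\varphi(\Ad(g)X_0)=\Ad^\ast(g)\varphi(X_0)=\Ad^\ast(g)\mu(X_0)$$
for all $g\in G$, so that $\mu$ is $\Ad^\ast$-equivariant.

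For the moment map identity, fix $X\in\mathfrak{g}$. The Hamiltonian candidate is $\hat{\mu}(X)(X_0)=\mu(X_0)(X)=\langle X_0,X\rangle_e$, and the fundamental vector field of the adjoint action is $\widetilde{X}(X_0)=\ad(X)(X_0)=[X,X_0]$. I would evaluate both sides of $\text{d}\hat{\mu}(X)=\iota_{\widetilde{X}}\Omega$ on an arbitrary tangent vector $\widetilde{Y}(X_0)=[Y,X_0]$. Differentiating along the adjoint flow yields
$$\text{d}\hat{\mu}(X)_{X_0}\big([Y,X_0]\big)=\frac{d}{dt}\Big|_{t=0}\langle \Ad(e^{tY})X_0,X\rangle_e=\langle [Y,X_0],X\rangle_e,$$
while the explicit formula $\Omega_{X_0}(\ad(A)(X_0),\ad(B)(X_0))=\langle X_0,[A,B]\rangle_e$ gives
$$(\iota_{\widetilde{X}}\Omega)_{X_0}\big([Y,X_0]\big)=\Omega_{X_0}\big([X,X_0],[Y,X_0]\big)=\langle X_0,[X,Y]\rangle_e.$$

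Finally I would identify these two expressions using $\ad$-invariance \eqref{Inv2}: applying it to $\ad(Y)$ gives $\langle [Y,X_0],X\rangle_e=-\langle X_0,[Y,X]\rangle_e=\langle X_0,[X,Y]\rangle_e$, which is precisely the right-hand side computed above. As $X$, $Y$ and $X_0$ are arbitrary, this proves $\text{d}\hat{\mu}(X)=\iota_{\widetilde{X}}\Omega$ for every $X\in\mathfrak{g}$, and combined with the equivariance already established, $\mu$ is an $\Ad^\ast$-equivariant moment map for $(\Ad(G)(X_\alpha),\Omega)$. The entire computation is a direct consequence of the nondegeneracy and $\ad$-invariance of $\langle\cdot,\cdot\rangle_e$; the only real care needed is the bookkeeping of signs in the Lie bracket and in \eqref{Inv2}, and I foresee no genuine obstacle, since the nontrivial construction of $\Omega$ itself is already supplied by the preceding proposition.
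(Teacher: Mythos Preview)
Your proof is correct and follows essentially the same approach as the paper: both verify the Hamiltonian identity by differentiating $\hat{\mu}(X)(X_0)=\langle X_0,X\rangle_e$ along an adjoint flow and then invoke the $\ad$-invariance \eqref{Inv2} to match the explicit formula for $\Omega$, and both deduce $\Ad^\ast$-equivariance directly from Lemma~\ref{Lemma1}. The only differences are cosmetic---the order of the two verifications and the choice of variable names---and you are slightly more explicit about the sign manipulation in the invariance step.
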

\begin{proof}
	For all $Y\in \mathfrak{g}$ and $X_0\in \Ad(G)(X)$ we get
	\begin{eqnarray*}
		\textnormal{d}\widehat{\mu}(Y)_{X_0}(\textnormal{ad}(Z)(X_0)) & = & \frac{d}{dt}\left\langle\textnormal{Ad}\left(e^{tZ}\right)(X_0),Y\right\rangle_e|_{t=0}=\langle Y,[Z,X_0]\rangle_e\\
		& = & \langle X_0,[Y,Z]\rangle_e=\Omega_{X_0}(\textnormal{ad}(Y)(X_0),\textnormal{ad}(Z)(X_0)).
	\end{eqnarray*}
	That is, $\textnormal{d}\widehat{\mu}(Y)=\iota_{\widetilde{Y}}\Omega$. Lemma \ref{Lemma1} implies that this moment map is $\textnormal{Ad}^\ast$-equivariant since
	$$\mu(\textnormal{Ad}(g)(X_0))=\varphi(\textnormal{Ad}(g)(X_0))=\textnormal{Ad}^\ast(g)(\varphi(X_0))=\textnormal{Ad}^\ast(g)(\mu(X_0)).$$
\end{proof}
\section{Open questions}
In this appendix we establish some problems whose interest arises from the results proved in this paper and the motivation was expressed previously.
\begin{problem*} 
	Find a characterization of 
	Lagrangian orbits  in terms of the moment map  $\mu $ (analogous to Proposition \ref{anill}) 
	in the case when  $\mu $ is not equivariant for the  
	adjoint representation, but only with respect to an affine representation.
\end{problem*}

\begin{problem*}
	Find the flags where $L$ admits an
	isotropic orbit, or more specifically,  a Lagrangian one. Equivalently,  find 
	the  \textquotedblleft types\textquotedblright\ of elements of  $%
	\mathfrak{u}$ in the orthogonal complement of  $\left( l^{\prime }\right) ^{\bot }$.
\end{problem*}

\begin{problem*} \label{pairs} Determine the pairs $H_{1},H_{2}\in \mathfrak{a}^{+}$ such that
	$U_{H_{1}}$ has  isotropic or Lagrangian orbit in   $\mathbb{F}%
	_{\Theta_{2}}$. This problem was partially solved in \cite{bedori}, who
	classified the linear compact groups that have Lagrangian orbits for
	projective spaces (therefore not just subgroups that are  centralizers of the
	tori). Beware of not jumping to the false conclusion of thinking that 
	\textquotedblleft  because the flag is a projective submanifold
	then the classification given by  \cite{bedori}, solves also the case of 
	flags\textquotedblright . This does not solve the problem, since a 
	Lagrangian submanifold of a   projective submanifold is only isotropic in 
	projective space, given that it has less than half the  dimension. 
\end{problem*}

\begin{problem*} Problem \ref{pairs} is likely to be related to the following question 
	about semisimple algebras:  let $\mathfrak{p}$ be a parabolic subalgebra. 
	Determine the types of nilpotent orbits that intersect 
	the nilradical $\mathfrak{n}$ of  $\mathfrak{p}$.
\end{problem*}

\begin{problem*} \label{weins} When an orbit  $U_{H}x$ is  Lagrangian its
	dimension is half of the dimension of the orbit $Z_{H}x$ which is open. A
	natural question is whether  $Z_{H}x$ is the  cotangent bundle of  $U_{H}x$,
	that is, whether there exists a  symplectic diffeomorphism between $Z_{H}x$ and $%
	T^{\ast }\left( U_{H}x\right) $ where the cotangent bundle is considered with the 
	canonical symplectic form. This problem is inspired in the  
	general theorem of Weinstein  \cite{We} that identifies  the cotangent bundle $T^{\ast }\mathcal{L}$ of a
	Lagrangian submanifold  $\mathcal{L}$ with a  tubular neighborhood of  $\mathcal{L}$.
\end{problem*}

\begin{problem*} An extension of  Problem \ref{weins}  is to ask when does  it happen 
	that an orbit  $U_{H}x$ is isotropic but not  Lagrangian. We may
	expect the following situation to hold in general: a) $Z_{H}x$ is a symplectic submanifold;
	b) $U_{H}x$ is a  Lagrangian submanifold of   $Z_{H}x$ and c) $%
	Z_{H}x\approx T^{\ast }\left( U_{H}x\right) $.
\end{problem*}

\begin{problem*} Generalize all results proved in the case of product of two flags for arbitrary products of flags.
\end{problem*}

\begin{problem*}
	Generalize Example \ref{example-tight-inf} for orbits of  of the Grassmanians 
	$$\mathrm{SU}\left( n\right) /S\left( \mathrm{U}\left( k\right) \times \mathrm{%
		U}\left( n-k\right) \right).$$
\end{problem*}

\section*{Acknowledgments}
	We want to thank Severin Barmeier and Alexander Givental for enlightening discussions. We are grateful for the support given by the Network NT8 from the Office of External Activities of the Adbus Salam International Center for Theoretical Physics (ICTP), Trieste, Italy. E. Gasparim was partially supported by a Simons Associateship ICTP.

\end{document}